\documentclass[10pt]{article}
\usepackage{amsmath,amssymb,amsthm}
\usepackage{color}
\usepackage[nohead,margin=1.0in]{geometry}
\usepackage{booktabs}
\usepackage{graphicx,caption,subcaption}
\usepackage{url}
\usepackage{mathtools}
\usepackage{amsmath}
\usepackage{amssymb}
\usepackage{commath}
\usepackage[normalem]{ulem}
\usepackage{tikz}
\usetikzlibrary{decorations.pathreplacing}
\usepackage{siunitx}
\usepackage{pgfplots}
\usetikzlibrary{pgfplots.groupplots}

\usepackage{nicefrac}

\usepackage{hyperref}
\hypersetup{
    colorlinks=true,
    linkcolor=blue,
    filecolor=magenta,      
    urlcolor=cyan,
    pdftitle={iResNet regularization theory},
    pdfpagemode=FullScreen,
    }

\pgfplotsset{
    compat=1.18,
    every axis plot/.append style={line width=0.8pt},
    }

\theoremstyle{plain}
\newtheorem{theorem}{Theorem}[section]
\newtheorem{remark}{Remark}[section]
\newtheorem{definition}{Definition}[section]
\newtheorem{lemma}{Lemma}[section]

\newtheorem{corollary}{Corollary}[section]

\numberwithin{equation}{section}

\newcommand{\N}{\mathbb{N}}
\newcommand{\R}{\mathbb{R}}

\newcommand{\origA}{\mathcal{A}}

\title{Invertible residual networks in the context of regularization theory for linear inverse problems}
%iResNet - Filter Theory - linear IP - nonlinear regularizations, "Data-driven nonlinear regularization theory exploiting invertible ResNets for solving linear inverse problems.
\author{Clemens Arndt\thanks{ Center for Industrial Mathematics, University of Bremen, 28359 Bremen, Germany. Alphabetical author order. Emails: \texttt{\{carndt, adenker, heilenkoetter, iskem, pmaass, junickel\}@uni-bremen.de, \{sdittmer, tkluth\}@math.uni-bremen.de}.} \and Alexander Denker\footnotemark[1] \and S\"oren Dittmer\footnotemark[1] \footnotemark[2]\thanks{Cambridge Image Analysis Group, University of Cambridge, Cambridge CB3 0WA, UK} \and Nick Heilenk\"otter\footnotemark[1] \and Meira Iske\footnotemark[1] \and Tobias Kluth\footnotemark[1] \and Peter Maass\footnotemark[1] \and Judith Nickel\footnotemark[1]}
\date{\today}

\begin{document}

\maketitle

\begin{abstract}
Learned inverse problem solvers exhibit remarkable performance in applications like image reconstruction tasks. These data-driven reconstruction methods often follow a two-step procedure. First, one trains the often neural network-based reconstruction scheme via a dataset. Second, one applies the scheme to new measurements to obtain reconstructions. We follow these steps but parameterize the reconstruction scheme with invertible residual networks (iResNets). We demonstrate that the invertibility enables investigating the influence of the training and architecture choices on the resulting reconstruction scheme. For example, assuming local approximation properties of the network, we show that these schemes become convergent regularizations. In addition, the investigations reveal a formal link to the linear regularization theory of linear inverse problems and provide a nonlinear spectral regularization for particular architecture classes. On the numerical side, we investigate the local approximation property of selected trained architectures and present a series of experiments on the MNIST dataset that underpin and extend our theoretical findings.

%    ToDo
%\begin{itemize}
%
%\item discussion $\rightarrow$ Tobias bis %26.05
%\item proof reading $\rightarrow$ Alle. Ab %26.05 bis 31.05
%\item Arxiv upload geplant 02.06
%%\item Talk Slides (deadline 24.Mai) (GAMM End of May (Meira, P.Maass), InDaM Beginning of June (Tobias), ICIAM August (CLemens), AIP September (Judith)) %
%
%   
%\end{itemize}
\end{abstract}

% -------------------------------
\section{Introduction}
\label{sec:introduction}
% -------------------------------

%Related Work:
%\begin{itemize}
    %\item Some kind of local approximation properties were also recently used in data-dependent projection-based regularization methods \cite[Sec. 5.1]{aspri2020data}. --> eingebunden
   % \item Maybe: \cite{chung2011designing} ? --> Passt gut als Vorlaeufer fuer die weiterfuehrenden Spectral ansaetze --> eingebunden
   % \item Maybe: \cite{benning2017learning} ? -> faellt in die kategorie gelernte filter im Sinne der Bildverarbeitung, würde vermutlich in die Kategorie gelernten penalties passen. Den direkt Bezug zu unseren "Filtern" sehe ich nicht direkt
   % \item Mukherjee et al. Learned Reconstruction Methods With Convergence Guarantees \cite{mukherjee2023learned} -> eingebunden
  %  \item Bauermeister et al. Learning Spectral Regularizations for Linear Inverse Problems \cite{bauermeister2020learning}, Kabri et. al. Convergent Data-driven Regularization for CT Reconstruction \cite{kabri2022convergent} --> eingebunden 
%\end{itemize}

%generelle einfuehrung + kalssische Theorie
In inverse problems, one aims to recover underlying causes from measurements by reversing the forward measurement process. Naturally, they arise in several fields, e.g., medical imaging, non-destructive testing, and PDE-based models. 
One defines inverse problems via their ill-posed nature, i.e., reversing the measurement process is discontinuous, and obtaining reasonable reconstructions requires a stable reconstruction scheme.
Usually, one defines the forward problem by possibly nonlinear forward operator $\origA : X \to Y$ mapping between Banach or Hilbert spaces. 
Linear spectral regularizations for linear problems in Hilbert space settings were already well studied over two decades ago \cite{engl1996regularization}. Also, more general nonlinear regularizations, e.g., sophisticated variational and iterative approaches, were the subject of extensive investigations for linear and nonlinear problems. We refer to the review \cite{benning2018modern} for a more general overview of this development.
More recently, the linear spectral regularization approach was generalized to diagonal frames \cite{ebner2023regularization}. Frames provide larger flexibility by allowing advantageous representations of the data, e.g., via a set of images, instead of solely singular functions provided by the operator.
Nonlinear spectral regularizations with nonlinear dependence on the measurement data were only considered in a few works, e.g., in the context of CG methods \cite[Cor. 7.4]{engl1996regularization}.

% allgemien DL for IP
During the last decade, these research directions have been accompanied by a highly dynamic development of learning-based methods for inverse problems (see \cite{arridge2019solving} for an earlier review). 
Many promising methodological directions have arisen, such as end-to-end learned reconstructions \cite{oh2018eter,he2020radon}, learned postprocessing \cite{jin2017deep, schwab2019deep}, unrolled iteration schemes \cite{gregor2010,adler2018learned,hauptmann2018model}, plug-and-play priors \cite{venkatakrishnan2013plug,ryu2019plug,laumont2022bayesian}, learned penalties in variational approaches \cite{lunz2018adversarial,mukherjee2020learned,li2020nett,alberti2021learning,obmann2021augmented}, (deep) generative models \cite{bora2017compressed, denker2021conditional}, regularization by architecture \cite{ulyanov2020deep, dittmer2020,arndt2022regularization}, and several more directions are developed. Besides the pure method development, an increasing number of works investigate theoretical justifications using the framework of regularization theory (see the recent survey \cite{mukherjee2023learned} and the more detailed consideration in Section~\ref{sec:relatedwork}).

In the present work, we follow a general supervised learning approach for a reconstruction scheme based on the concept of invertible residual networks \cite{behrmann2019invertible}. We use this architecture to provide a nonlinear regularization scheme for which we develop the general convergence theory by exploiting a local approximation property of the underlying network. Furthermore, instead of solving the inverse problem directly, we use a training strategy that aims to approximate the forward operator. We also introduce an architecture type acting on the singular function directions, and show that it provides a data-dependent nonlinear spectral regularization which is theoretically analyzed for specific shallow architectures and it is linked to the established linear spectral regularization theory. In addition, we underpin our theoretical findings with several numerical experiments.

The manuscript is structured as follows: 
Section~\ref{sec:setting} defines the problem setting and the theoretical framework. We then investigate general regularization properties in Section~\ref{sec:local_approx_property}.
In Section~\ref{sec:specialization}, we discuss specific architectures and the outcome of a training approach aiming for approximating the forward operator. We then relate it to the classical filter-based regularization theory. Following the theoretical investigation, Section~\ref{sec:numerics} presents a series of numerical experiments. We conclude with a discussion and outlook in Section~\ref{sec:discussion}.

\subsection{Related work}
\label{sec:relatedwork}

% Regularisierungstheorie fuer gelernte Verfahren
Empirically the success of learning-based methods has been established for several applications like various imaging modalities. Theoretically, there is still a gap, e.g., regarding the convergence properties of such reconstruction schemes.
Recently an increasing number of works have examined this aspect. We recommend the excellent survey article \cite{mukherjee2023learned}, illustrating regularization properties of some learned methods \cite[Fig.3]{mukherjee2023learned}.

% postprocessing networks
One early example providing a convergent learned postprocessing approach is the deep null space network \cite{schwab2019deep}. It utilizes established regularization methods and turns them into learned variants. While it replaces the concept of a minimum norm solution, it inherits convergence guarantees.
% projection consideration
\cite{aspri2020data} investigates the convergence of regularization methods trained on a finite set of training samples without access to the entire forward operator. Specifically, the authors consider the projections on the subspaces spanned by the dataset. Assumptions on the dataset ensure desired approximation properties for the convergence analysis of the regularization method. %density estimates, allow to control the approximation error in Thm 17
The authors then also consider the data-dependent projected problem in a variational Tikhonov-type framework and derive convergence of the regularization by assuming certain approximation properties of the projection being fulfilled for the desired solution. %THm 23 

% learned penalties
Also, Tikhonov-type/variational methods and their classical regularization theory were used to obtain learned regularizations with convergence guarantees. In particular, learned convex regularizers \cite{mukherjee2020learned,mukherjee2021learning} and the network Tikhonov (NETT) approach \cite{li2020nett, obmann2021augmented} investigated this. 

% Deep equilibrium
Minimization of Tikhonov-type functionals is often performed by solving an equilibrium equation, e.g., obtained from first-order optimality conditions. This concept is generalized by the authors in the recent work \cite{obmann2023convergence} where a learned operator is introduced in the equilibrium equation replacing, for example, the component delivered by the regularizer. The authors formulate suitable assumptions on the learnable operator to guarantee convergence, and they also illustrate that residual networks fulfill the desired properties if the residual part in the network is contractive.

% plug and play
\cite{ebner2022plug} recently suggested convergence results for plug-and-play reconstructions that are conceptionally motivated by the Tikhonov theory.
They guarantee convergence by assuming the required conditions on their parameterized family of denoising operators. 
%Convergence in the sense of a regularization is guaranteed there by including the required conditions in the definition of a parameterized family of denoising operators. %Interessanterweise wird dort auch ein Lipschitz - constraint formuliert. Könnte man mal reinschauen. 

% analytic deep prior
The deep image prior \cite{ulyanov2020deep} proposed exploiting a network's training and architecture as a regularization to achieve better image representation. Based on this concept of unsupervised learning from one single measurement, the authors of \cite{dittmer2020} derived an analytic deep prior framework formulated as a bilevel optimization problem. This allowed them to verify regularization properties for particular cases by exploiting relations to the classical filter theory of linear regularizations. More recently, \cite{arndt2022regularization} investigated the equivalence between Ivanov regularization and analytic deep prior. They investigated the inclusion of early stopping and proved regularization properties \cite{arndt2022regularization}.

% learned spectral
An earlier work \cite{chung2011designing} proposed a data-driven approach to a learned linear spectral regularization; more recently, \cite{bauermeister2020learning,kabri2022convergent} considered convergence aspects. In \cite{kabri2022convergent}, the authors learn a scalar for each singular function direction in a filter-based reconstruction scheme, i.e., a linear regularization scheme. Due to the assumed properties of noise and data distributions, their training outcome is equivalent to a standard Tikhonov regularization with a data- and noise-dependent linear operator included in the penalty term, which is diagonal with respect to the system of singular functions. The authors further verify convergence results concerning the data- and noise-dependent regularization parameters.

% -------------------------------
\section{Problem setting - theoretical framework}
\label{sec:setting}
% -------------------------------

We consider linear inverse problems based on the operator equation
\begin{equation} \label{eq:inverse_problem_X_to_Y}
    \origA  x = y,
\end{equation}
where $\origA  \colon X \to Y$ is a linear and bounded operator between Hilbert spaces $X$ and $Y$. For simplification, $\|\origA \|=1$ is assumed, which can be easily obtained by a scaling of the operator. We aim to recover the unknown ground truth $x^\dagger$ as well as possible by only having access to a noisy observation $y^\delta \in Y$ such that $\|y^\delta - \origA  x^\dagger\| \leqslant  \delta$, where $\delta > 0$ is the noise level.

Instead of solving \eqref{eq:inverse_problem_X_to_Y} directly, we define $A = \origA ^* \origA $ and $z^\delta = \origA ^* y^\delta$ to get the operator equation
\begin{equation} \label{eq:inverse_problem_X_to_X}
    A x = \origA ^\ast \origA  x  =  \origA ^\ast y = z
\end{equation}
which only acts on $X$, i.e., we consider the normal equation with respect to $\origA $. 
We propose a two-step data-based approach for solving \eqref{eq:inverse_problem_X_to_X}, resp. \eqref{eq:inverse_problem_X_to_Y}, which we refer to as the \textit{iResNet reconstruction approach}:
\begin{itemize}
    \item[(I)] Supervised training of an invertible neural network $\varphi_\theta \colon X \to X$ to approximate $A$.
    %\item Approximation of $A$ with an invertible neural network $\varphi_\theta \colon X \to X$ via supervised training.
    \item[(II)] Using $\varphi_\theta^{-1}$
    (or $\varphi_\theta^{-1} \circ \origA ^*$, respectively) to solve the inverse problem.
\end{itemize}
In general, the supervised training covers both cases, either training on noise-free data tuples $(x^{(i)}, Ax^{(i)})$ or on noisy tuples $(x^{(i)}, Ax^{(i)}+\eta^{(i)})$ with noise $\eta^{(i)}$ for a given dataset $\{x^{(i)}\}_{i\in \{1,\hdots,N\}}\subset X$.
To guarantee the invertibility, we use a residual network of the form
\begin{equation}
    \varphi_\theta(x) = x - f_\theta(x)
\end{equation}
and restrict $f_\theta$ to be Lipschitz continuous with $\mathrm{Lip}(f_\theta) \leqslant  L < 1$. We refer to this architecture, which is illustrated in Figure~\ref{fig:resnet}, as an invertible residual network (iResNet) \cite{behrmann2019invertible}. Note that \cite{behrmann2019invertible} considers concatenations of several of these invertible blocks while we focus on a single residual block.

\begin{figure}[ht]
\centering
\includegraphics[width=0.3\textwidth]{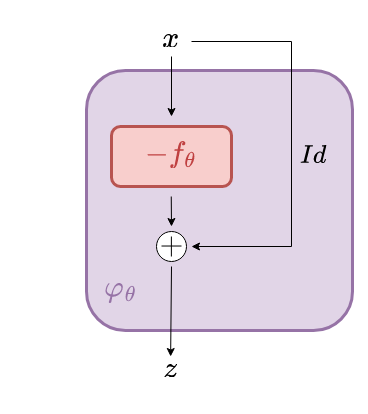}
\caption{Illustration of the residual network architecture of $\varphi_\theta$ with residual function $f_\theta$ and skip connection. }
\label{fig:resnet}
\end{figure}

The essential properties of the iResNets (like how it can be inverted) are summarized in the following lemma.
\begin{lemma}[General properties of iResNets]\label{lemma:props_iResNet}
 Let $\varphi_\theta, f_\theta: X \rightarrow X$, $\varphi_\theta(x)= x - f_\theta(x)$, where $\mathrm{Lip}(f_\theta) \leqslant  L < 1$. Then it holds:
 \begin{itemize}
     \item[(i)]
     \begin{equation}
         \mathrm{Lip}(\varphi_\theta) \leqslant L+1
     \end{equation}
     \item[(ii)] 
     \begin{equation}
         \mathrm{Lip}(\varphi_\theta^{-1}) \leqslant \frac{1}{1-L}
     \end{equation}
     \item[(iii)] For given $z\in X$, the unique $x = \varphi_\theta^{-1} (z)$ is given by 
     $x = \lim_{k\rightarrow \infty} x^{k}  $
     where 
     \begin{equation}
         x^{k+1} = f_{\theta}( x^{k }) + z
     \end{equation}
     for arbitrary and given $x^{0} \in X$.
 \end{itemize}
 
 \end{lemma}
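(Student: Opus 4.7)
The plan is to prove the three items in the order (iii), (i), (ii), since (iii) simultaneously establishes that $\varphi_\theta$ is a bijection on $X$, which is needed to make statement (ii) meaningful.

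For (iii), I would define $T_z \colon X \to X$ by $T_z(x) = f_\theta(x) + z$. Since $f_\theta$ has Lipschitz constant at most $L < 1$, the same bound holds for $T_z$, so $T_z$ is a strict contraction on the complete metric space $X$. Banach's fixed point theorem then gives a unique fixed point $\bar x \in X$ with $\bar x = f_\theta(\bar x) + z$, equivalently $\varphi_\theta(\bar x) = z$, and the iterates $x^{k+1} = T_z(x^k)$ converge to $\bar x$ from any starting point $x^0 \in X$. Because $z \in X$ was arbitrary, this also shows that $\varphi_\theta$ is surjective; uniqueness of the fixed point shows injectivity. Hence $\varphi_\theta^{-1}$ is well-defined on all of $X$.

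For (i), I would use the identity $\varphi_\theta(x) - \varphi_\theta(y) = (x - y) - (f_\theta(x) - f_\theta(y))$ together with the triangle inequality and the Lipschitz bound on $f_\theta$ to conclude
\begin{equation}
\|\varphi_\theta(x) - \varphi_\theta(y)\| \leqslant \|x - y\| + L\|x - y\| = (1+L)\|x - y\|.
\end{equation}
For (ii), the reverse triangle inequality applied to the same decomposition yields
\begin{equation}
\|\varphi_\theta(x) - \varphi_\theta(y)\| \geqslant \|x - y\| - \|f_\theta(x) - f_\theta(y)\| \geqslant (1-L)\|x - y\|.
\end{equation}
Setting $u = \varphi_\theta(x)$, $v = \varphi_\theta(y)$ and using the bijectivity established in (iii), I can substitute $x = \varphi_\theta^{-1}(u)$, $y = \varphi_\theta^{-1}(v)$ and rearrange to obtain $\|\varphi_\theta^{-1}(u) - \varphi_\theta^{-1}(v)\| \leqslant \tfrac{1}{1-L}\|u - v\|$.

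None of the steps is genuinely hard; the only point that requires mild care is to prove (iii) before (ii), so that one can legitimately talk about $\varphi_\theta^{-1}$ on the entire space $X$ rather than merely on the range of $\varphi_\theta$. Given the contraction assumption $L < 1$, each bound is an elementary consequence of the triangle inequality or of Banach's fixed point theorem.
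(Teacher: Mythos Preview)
Your proposal is correct and complete. The paper itself does not give a proof at all but simply cites \cite{behrmann2019invertible}; your argument via Banach's fixed point theorem for (iii) and the triangle/reverse triangle inequality for (i) and (ii) is exactly the standard one underlying that reference, so there is nothing substantively different to compare.
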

\begin{proof}
The idea of the proof is taken from \cite[Section 2]{behrmann2019invertible}. The first assertion follows directly from 
\begin{equation}
    \mathrm{Lip}(\varphi_\theta) \leqslant \mathrm{Lip}(Id) + \mathrm{Lip}(f_\theta).
\end{equation}

Assertion (iii) follows from the fixed point theorem of Banach. Since $f_\theta$ is contractive, the iteration convergences to a unique fixed point. This fixed point $x$ fulfills 
\begin{equation}
    z = x + f_\theta(x) = \varphi(x).
\end{equation}

Finally, assertion $(ii)$ follows from
\begin{align}
    \| \varphi_\theta(x^1) - \varphi_\theta(x^2)\| &= \|x^1 - x^2 - ( f_\theta(x^1) - f_\theta(x^2)) \| \\ \notag
    &\geqslant  \|x^1 - x^2\| - \|f_\theta(x^1) - f_\theta(x^2)\| \geqslant (1-L) \|x^1 - x^2\|
\end{align}
by replacing $\varphi_\theta(x^i)$ with $z^i$ and $x^i$ with $\varphi_\theta^{-1}(z^i)$ for $i=1,2$.
\end{proof}

Besides, we assume the existence of a singular value decomposition (SVD) $(u_j, v_j, \sigma_j)_{j\in \N}$ of the operator $\origA $, i.e., $u_j \in Y$, $v_j \in X$, $\sigma_j > 0$ such that
\begin{equation}
    \origA  x = \sum_{j=1}^\infty \sigma_j \langle x, v_j \rangle u_j
\end{equation}
holds\footnote{In case of $\mathrm{dim}(\mathcal{R}(\origA )) < \infty$, the number of singular values is finite.}. All compact operators guarantee this.
With respect to $A$, the vectors $v_j$ are eigenvectors, and $\sigma_j^2$ are the corresponding eigenvalues. The system $\{v_j\}_j\in\N$ builds an orthonormal basis of $\overline{\mathcal{R}(A)} = \overline{\mathcal{R}(\origA ^\ast)} = \mathcal{N}(\origA )^\perp=\mathcal{N}(A)^\perp$. Due to the assumption $\|\origA \|=1$, it holds $\sigma_j \in (0,1]$.

%{\color{blue} [TK: we might do not need the following]}

%\begin{lemma}[Sufficient conditions for invertibility of multi-layer ResNet architectures]
%TBD later
%\end{lemma}
%\begin{proof}
%...
%\end{proof}

%\begin{lemma}[Coercivity]
%Let $\varphi_\Theta$ be a neural network consisting of a finite number of residual blocks of the form \eqref{eq:iResNet}, where each residual function has a Lipschitz constant smaller than $L<1$. If $R \colon X \to \R$ is a coercive functional then $R \circ \varphi_\Theta$ is also coercive.
%\end{lemma}

%\begin{proof}
%Let $(x_n) \subset X$ be a sequence with $\|x_n\| \to \infty$. By Lemma \ref{lemma:props_iResNet}, we know that $\varphi_\Theta^{-1}$ is Lipschitz continuous with some Lipschitz constant $\tilde{L}$ (depending on $L$ and the number of residual blocks). It follows
%\begin{equation}
%\begin{split}
%    \|x_n - 0\| &= \|\varphi_\Theta^{-1}(\varphi_\Theta(x_n)) - \varphi_\Theta^{-1}(\varphi_\Theta(0)) \| \\
 %   &\leqslant  \tilde{L} \|\varphi_\Theta(x_n) - \varphi_\Theta(0) \| \leqslant   \tilde{L} \|\varphi_\Theta(x_n)\| +  \tilde{L} \| \varphi_\Theta(0) \|.
 %   \end{split}
%\end{equation}
%Accordingly, $\|\varphi_\Theta(x_n)\| \to \infty$ must hold and coercivity of the functional $R$ implies $R(\varphi_\Theta(x_n)) \to \infty$.
%\end{proof}

% -------------------------------
\section{Regularization properties of the iResNet reconstruction}
\label{sec:local_approx_property}
In this section, we discuss regularization properties in terms of well-definedness, stability, and convergence of the general reconstruction approach defined by the family of operators $T_L=\varphi_{\theta,L}^{-1} \circ \origA ^\ast$. Note that $L \in [0, 1)$ in the constraint of the Lipschitz constant of the residual function undertakes the role of the regularization parameter by $L\to 1$, as will be made apparent in this section. 
\subsection{General architecture and local approximation property}
To highlight the essential dependence of the trained network on $L$, we write $\varphi_{\theta,L}$ and $f_{\theta,L}$ in the following. Please note that the set of possible network parameters and the underlying architecture might also depend on $L$. We thus consider a network parameter space $\Theta(L)$ depending on $L$.

\begin{lemma}[Well-definedness and stability]
For $L \in [0,1)$ and $\theta \in \Theta(L)$, let $f_{\theta,L}: X \rightarrow X$ be such that $\mathrm{Lip}(f_{\theta,L}) \leqslant  L$ and $\varphi_{\theta,L}(x)= x - f_{\theta,L}(x)$. Then, the reconstruction scheme $T_L=\varphi_{\theta,L}^{-1}\circ \origA ^\ast:Y \rightarrow X$ is well-defined and Lipschitz continuous.
%\begin{itemize}
%\item[(i)] For any $L\in [0,1)$ and arbitrary $\theta \in \Theta(L)$, $T_L=\varphi_{\theta,L}^{-1}\circ \origA ^\ast:Y \rightarrow X$ is well-defined.
%    \item[(ii)] For any $L\in [0,1)$ and arbitrary $\theta \in \Theta(L)$, $T_L=\varphi_{\theta,L}^{-1} \circ \origA ^\ast:Y \rightarrow X$ is Lipschitz-continuous.
%\end{itemize}
\end{lemma}

\begin{proof}
Well-definedness follows immediately from Lemma \ref{lemma:props_iResNet} (iii) and Lipschitz continuity from (ii).
\end{proof}

While well-definedness and continuity (stability) immediately follow from the construction of the reconstruction method $T_L$, the convergence property requires a specific approximation property, which we must guarantee during training. We do so by introducing an index function that generalizes the concept of convergence rates (see, e.g., \cite{Schuster_2012}).

\begin{definition}
An index function is a mapping $\psi \colon \R_{\geqslant  0} \to \R_{\geqslant  0}$, which is continuous, strictly increasing and it holds $\psi(0)=0$.
\end{definition}

%This is one of the major assumptions in the following theorem regarding convergence of the reconstruction method for an a priori parameter choice.

Now, we can formulate a convergence result for the reconstruction $T_L(y^\delta)$ for $\delta \to 0$ and a suitable a priori parameter choice $L(\delta)$.

\begin{theorem}[Convergence - local approximation property]
\label{thm:convergence}
Let $x^\dagger\in X$ be a solution of the problem $\origA x=y$ for $y\in R(\origA )$ and $y^\delta \in Y$ fulfill $\|y^\delta - y\| \leqslant \delta$. 
%We further assume that the training of the network, which provides the network parameters $\theta(L)\in \Theta(L)$, $L\in [0,1)$, is designed such that the approximation property in $x^\dagger \in X$
Furthermore, let the network parameters $\theta(L) \in \Theta(L)$ for $L \in [0,1)$ be obtained in a way that the local approximation property
\begin{equation}\label{eq:conv_trainingproperty_ass}
   \|\origA ^* \origA  x^\dagger -\varphi_{\theta(L), L}(x^\dagger) \| = \mathcal{O}((1-L)\psi(1-L)) \quad (\text{as } L \to 1)
\end{equation} 
holds for some index function $\psi$.

If $L:(0,\infty) \rightarrow [0,1)$ fulfills 
\begin{equation}\label{eq:conv_param_choice_ass}
    L(\delta) \rightarrow 1 \quad \land \quad \frac{\delta}{1-L(\delta)} \rightarrow 0 \qquad \text{for } \delta \to 0,
\end{equation}
then for $x_{L(\delta)}^\delta :=T_{L(\delta)}(y^\delta)= (\varphi_{\theta(L(\delta)),L(\delta)}^{-1} \circ \origA ^\ast)(y^\delta)$ it holds 
\begin{equation}
    \| x_{L(\delta)}^\delta - x^\dagger\| \rightarrow 0 \qquad \text{for } \delta \to 0.
\end{equation}
\end{theorem}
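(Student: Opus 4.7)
The plan is to follow the classical regularization-theory template that splits the total error into a \emph{data propagation error} and an \emph{approximation error}, then show that the hypotheses on the parameter choice force both pieces to vanish.

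First I would introduce the auxiliary noise-free reconstruction
\[
x_{L} := \varphi_{\theta(L), L}^{-1}(\tilde{A}^{\ast} y) = \varphi_{\theta(L), L}^{-1}(\tilde{A}^{\ast} \tilde{A} x^{\dagger}),
\]
where the last equality uses $y = \tilde{A} x^{\dagger}$ (since $y \in \mathcal{R}(\tilde{A})$ and $x^{\dagger}$ is a solution). By the triangle inequality,
\[
\bigl\| x_{L(\delta)}^{\delta} - x^{\dagger}\bigr\| \leqslant \bigl\| x_{L(\delta)}^{\delta} - x_{L(\delta)}\bigr\| + \bigl\| x_{L(\delta)} - x^{\dagger}\bigr\|,
\]
and I would bound each summand separately.

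For the data propagation error I would apply Lemma~\ref{lemma:props_iResNet}(ii) to $\varphi_{\theta(L),L}^{-1}$, together with $\|\tilde{A}^{\ast}\| = \|\tilde{A}\| = 1$, to get
\[
\bigl\| x_{L}^{\delta} - x_{L}\bigr\| \leqslant \frac{1}{1-L}\,\bigl\|\tilde{A}^{\ast}(y^{\delta} - y)\bigr\| \leqslant \frac{\delta}{1-L}.
\]
For the approximation error, the trick is to rewrite $x^{\dagger} = \varphi_{\theta(L),L}^{-1}\bigl(\varphi_{\theta(L),L}(x^{\dagger})\bigr)$ so that the Lipschitz bound of the inverse can be invoked once more:
\[
\bigl\| x_{L} - x^{\dagger}\bigr\| \leqslant \frac{1}{1-L}\,\bigl\|\tilde{A}^{\ast}\tilde{A} x^{\dagger} - \varphi_{\theta(L),L}(x^{\dagger})\bigr\| = \mathcal{O}\bigl(\psi(1-L)\bigr),
\]
by the local approximation property~\eqref{eq:conv_trainingproperty_ass}; here the crucial cancellation is that the $(1-L)$ in~\eqref{eq:conv_trainingproperty_ass} is precisely what absorbs the blow-up $1/(1-L)$ of $\mathrm{Lip}(\varphi_{\theta(L),L}^{-1})$.

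Finally I would combine the two estimates and invoke the parameter choice~\eqref{eq:conv_param_choice_ass}: the first term tends to zero by assumption $\delta/(1-L(\delta)) \to 0$, and the second vanishes because $L(\delta) \to 1$ together with continuity of $\psi$ at $0$ with $\psi(0)=0$ gives $\psi(1-L(\delta)) \to 0$. I do not expect any genuine obstacle here — the whole argument is a two-line triangle inequality once the auxiliary element $x_L$ is introduced. The only subtle point worth stating carefully is the motivation for the scaling $(1-L)\psi(1-L)$ in the hypothesis~\eqref{eq:conv_trainingproperty_ass}: it is calibrated exactly so that the approximation error, after being amplified by the inverse Lipschitz constant $1/(1-L)$, still decays as an index function of $1-L$.
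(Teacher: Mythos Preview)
Your proposal is correct and matches the paper's proof essentially line for line: the same triangle-inequality split via the auxiliary noise-free reconstruction, the same use of the Lipschitz bound $\mathrm{Lip}(\varphi_{\theta,L}^{-1})\leqslant 1/(1-L)$ on both pieces, and the same cancellation of the $(1-L)$ factor against the scaling in~\eqref{eq:conv_trainingproperty_ass}. Your closing remark explaining \emph{why} the hypothesis is calibrated as $(1-L)\psi(1-L)$ is a nice piece of exposition that the paper leaves implicit.
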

\begin{proof}
For improved readability we write $\varphi_\delta:=\varphi_{\theta(L(\delta)),L(\delta)}$ and $f_\delta:=f_{\theta(L(\delta)),L(\delta)}$ in the remainder of the proof. 
Using Lemma \ref{lemma:props_iResNet}, it holds
\begin{align}
     \| x_{L(\delta)}^\delta - x^\dagger\| &\leqslant  \| \varphi_\delta^{-1}(\origA ^\ast y^\delta) - \varphi_\delta^{-1}(\origA ^\ast y) \| + \| \varphi_\delta^{-1}(\origA ^\ast y) - x^\dagger \|  \notag\\
     &\leqslant \frac{\|\origA ^\ast\|}{1-L(\delta)} \|y^\delta - y\| + \| \varphi_\delta^{-1}(\origA ^\ast \origA  x^\dagger) - x^\dagger \| \notag\\
     &\leqslant \frac{\delta}{1-L(\delta)} + \| \varphi_\delta^{-1}(\origA ^\ast \origA  x^\dagger) - \varphi_\delta^{-1}(\varphi_\delta(x^\dagger)) \| \notag\\
     & \leqslant \frac{\delta}{1-L(\delta)} + \frac{1}{1-L(\delta)} \| \origA ^\ast \origA  x^\dagger -\varphi_\delta(x^\dagger) \|.
     \label{eq:convprop_auxil1}
\end{align}
The assertion follows due to the assumptions \eqref{eq:conv_trainingproperty_ass} and \eqref{eq:conv_param_choice_ass} as $\lim_{L\to 1} \psi(1-L) =0$.
\end{proof}

\begin{remark}
%A relation to well-studied source conditions can be derived for example for $\psi(\lambda)=\lambda^\varepsilon$ for an $\epsilon >0$, i.e.,
%\begin{equation}
%   \|f_{\theta(L),L}(x^\dagger) - (Id -\origA ^\ast \origA )x^\dagger \| = \mathcal{O}((1-L)^{1+\epsilon}) 
%\end{equation} 
%is fulfilled for the residual function. 
We can obtain a convergence rate of $\|x_{L(\delta)}^\delta - x^\dagger\| = \mathcal{O} \big( \delta^{\,\varepsilon/(1+\varepsilon)}\big)$
for $\varepsilon > 0$, if \eqref{eq:conv_trainingproperty_ass} is fulfilled with the index function $\psi(\lambda)=\lambda^\varepsilon$, i.e.,
\begin{equation}
   \|\origA ^*\origA x^\dagger -\varphi_{\theta(L),L}(x^\dagger)\| = \mathcal{O}((1-L)^{1+\epsilon}) 
\end{equation}
and by choosing $1 - L(\delta) \sim  \delta^{1/(1+\varepsilon)}.$

\end{remark}
%\begin{remark}
%Note that our convergence theorem relies on a local approximation property. Similar local properties were used by \cite{aspri2020data} in Section 5.1. to show convergence for their method.
%\end{remark}
\begin{remark}\label{rem:weaker_approx_property}
Note that the local approximation property \eqref{eq:conv_trainingproperty_ass} is a slightly stronger assumption, which is derived by exploiting the properties of the iResNet to remove the explicit dependency on the inverse within the constraint. 
A weaker but sufficient condition (which is implied by \eqref{eq:conv_trainingproperty_ass}) is 
\begin{equation}\label{eq:weaker_approx_property}
    \| \varphi_{\theta(L),L}^{-1}(\origA ^\ast \origA  x^\dagger) - x^\dagger \|  \rightarrow 0 \quad \text{as } L \to 1.
\end{equation}
%where $\varphi_L:=\varphi_{\theta(L),L}$. 

\end{remark}

The local approximation property \eqref{eq:conv_trainingproperty_ass} provides a relation between the approximation capabilities of the network architecture and the regularization properties of the iResNet reconstruction scheme, which also contains a certain kind of source condition.
Note that the previous statement differs from common convergence results, which are stated not locally but for a broader range of solutions, i.e., for any $y\in \mathcal{R}(\origA)$. 
Here, the locality is strongly linked to the approximation capabilities of the underlying network, which can imply certain limitations. 
Thus, of particular interest for the convergence properties is the \textit{approximation property set}
\begin{equation}
S=\{ x\in X \,|\, \exists \text{ index function } \psi: \,x \text{ fulfills } \eqref{eq:conv_trainingproperty_ass} \}.
\end{equation}

There are no specific assumptions on the design of the training of the network $\varphi_\theta$ made by \eqref{eq:conv_trainingproperty_ass}. But since both architecture choice and training procedure are crucial for the resulting network parameters $\theta(L)$, they also have a strong impact on the local approximation property and the structure of $S$.
%The local approximation property and the structure of the set $S$ strongly relies on the architecture choices and the design of the training.
This is discussed in more detail in the context of the specific shallow architectures in Section~\ref{sec:specialization}. In addition, we report numerical experiments regarding the local approximation property in Section~\ref{sec:numerics}. 

A simple example illustrating the conditions under which a linear network satisfies the local approximation property is given in the following remark.
\begin{remark}[Local approximation property for linear networks]
Assume that the network $\varphi_{\theta,L}$ is linear and the prediction error is bounded by  $\|\varphi_{\theta(L(\delta)),L(\delta)}(x^{(i)}) - \origA ^* y^{\delta, (i)}\| \leqslant \zeta(\delta)$ with $\zeta(\delta)\in\R$ on a dataset $(x^{(i)}, y^{\delta, (i)})_{i=1, \dots, N}$ with $\| \origA x^{(i)} - y^{\delta, (i)} \| \leqslant \delta$. Then, for $x^\dagger$ given by $x^\dagger = \sum_{i=1}^N t_i x^{(i)}$ with $t_i\in\R$ we have
\begin{align}
   \|\origA ^* \origA  x^\dagger -\varphi_{\theta(L(\delta)),L(\delta)}(x^\dagger) \| &= \Big\| \origA ^\ast \origA  \sum_{i=1}^N t_i x^{(i)} - \sum_{i=1}^N t_i \varphi_{\theta(L(\delta)),L(\delta)} x^{(i)} \Big\| \notag\\
    &\leqslant  \sum_{i=1}^N |t_i|\, \| \origA ^\ast \origA   x^{(i)} - \varphi_{\theta(L(\delta)),L(\delta)} x^{(i)} \| \notag\\
    &\leqslant  \sum_{i=1}^N |t_i| \left( \| \origA ^\ast \origA   x^{(i)} - \origA ^*y^{\delta, (i)}\| + \|\origA ^*y^{\delta, (i)} - \varphi_{\theta(L(\delta)),L(\delta)} x^{(i)} \|\right) \notag\\
    &\leqslant  \sum_{i=1}^N |t_i|\, (\delta + \zeta(\delta)).
\end{align}
As as result, Theorem \ref{thm:convergence} implies convergence of $x_{L(\delta)}^\delta$ to $x^\dagger$ if
\begin{equation}\label{eq:param choice remark lin network}
    \frac{\delta + \zeta(\delta)}{1-L(\delta)} \to 0 \quad \text{for } \delta\to 0
\end{equation}
and if $L(\delta)$ satisfies the conditions of \eqref{eq:conv_param_choice_ass}. This example shows that in the case that the reconstruction error can be bounded by 
$\zeta(\delta)$ satisfying \eqref{eq:param choice remark lin network}, the convergence directly translates to the linear span of the training data. Consequently, for test data similar to the training data, a simple criterion, that can be numerically validated to ensure convergence to the ground truth data, is obtained. 
\end{remark}

Similar to a source condition, \eqref{eq:conv_trainingproperty_ass} also has an influence on the type of solution $x^\dagger$ one approximates in the limit $\delta \to 0$ if the operator $\origA $ has a non-trivial kernel. For example, if $\mathcal{R}(f_{\theta(L),L}) = \mathcal{N}(\origA )^\perp$, the local approximation property enforces that $P_{\mathcal{N}(\origA )}x^\dagger =0$, i.e., together with the approximation of $\origA ^\ast \origA $ one would consider the minimum norm solution.
More generally, we can formulate the following property of $S$.
%We thus conclude with the following implication of the assumed approximation property, which is solely influenced by training design, network architecture, and data point.

\begin{lemma}
Let $x_1^\dagger, x_2^\dagger \in S$ both be solutions of $\origA x = y$ for one $y \in \mathcal{R}(\origA )$. Then it holds $x_1^\dagger =  x_2^\dagger$.
\end{lemma}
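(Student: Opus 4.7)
The plan is to exploit the fact that the iResNet reconstruction $T_L(y^\delta) = (\varphi_{\theta(L),L}^{-1} \circ \tilde{A}^\ast)(y^\delta)$ does not depend on which solution of $\tilde{A}x=y$ we compare it against. If both candidate solutions lie in $S$, Theorem \ref{thm:convergence} will force the reconstruction to converge to each of them simultaneously, and uniqueness of limits then forces $x_1^\dagger = x_2^\dagger$.

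Concretely, I would proceed as follows. First, pick the trivial noise setting $y^\delta := y$ for every $\delta > 0$, which satisfies $\|y^\delta - y\| \leqslant \delta$. Second, choose any parameter rule $L:(0,\infty) \to [0,1)$ satisfying \eqref{eq:conv_param_choice_ass}, for instance $L(\delta) = 1 - \sqrt{\delta}$; crucially, the conditions in \eqref{eq:conv_param_choice_ass} depend only on $\delta$ and not on the particular element of $S$, so one and the same rule is admissible for both $x_1^\dagger$ and $x_2^\dagger$. Third, observe that for each $\delta > 0$ the element
\begin{equation*}
    x^\delta_{L(\delta)} := T_{L(\delta)}(y) = \varphi_{\theta(L(\delta)),L(\delta)}^{-1}(\tilde{A}^\ast y) \in X
\end{equation*}
is a single, well-defined point that makes no reference to any solution of the inverse problem.

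Since $x_1^\dagger \in S$, there exists an index function $\psi_1$ such that \eqref{eq:conv_trainingproperty_ass} holds for $x_1^\dagger$, and analogously for $x_2^\dagger$ with some $\psi_2$. Applying Theorem \ref{thm:convergence} to $x^\dagger = x_1^\dagger$ yields $\|x^\delta_{L(\delta)} - x_1^\dagger\| \to 0$ as $\delta \to 0$, and applying it to $x^\dagger = x_2^\dagger$ yields $\|x^\delta_{L(\delta)} - x_2^\dagger\| \to 0$. By the triangle inequality,
\begin{equation*}
    \|x_1^\dagger - x_2^\dagger\| \leqslant \|x_1^\dagger - x^\delta_{L(\delta)}\| + \|x^\delta_{L(\delta)} - x_2^\dagger\| \xrightarrow{\delta \to 0} 0,
\end{equation*}
so $x_1^\dagger = x_2^\dagger$.

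There is essentially no obstacle in this argument, because both invocations of Theorem \ref{thm:convergence} are against the \emph{same} family of trained networks $\{\varphi_{\theta(L),L}\}_{L \in [0,1)}$, and membership of $x_i^\dagger$ in $S$ precisely supplies the local approximation property needed in that theorem. The only mildly delicate point is making explicit that the parameter choice rule can be taken to be common to both, which is immediate from the fact that \eqref{eq:conv_param_choice_ass} is a purely $\delta$-dependent condition.
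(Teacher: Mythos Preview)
Your proof is correct and takes a genuinely different route from the paper. The paper argues directly: using $\tilde{A}x_1^\dagger = \tilde{A}x_2^\dagger$ and the Lipschitz bound $\mathrm{Lip}(f_L)\leqslant L$, it derives the inequality
\[
(1-L)\,\|x_1^\dagger - x_2^\dagger\| \leqslant \|\tilde{A}^*\tilde{A}x_1^\dagger - \varphi_L(x_1^\dagger)\| + \|\tilde{A}^*\tilde{A}x_2^\dagger - \varphi_L(x_2^\dagger)\|,
\]
and then invokes the local approximation property on the right-hand side to conclude $\|x_1^\dagger - x_2^\dagger\| = \mathcal{O}(\psi(1-L)) \to 0$ as $L\to 1$. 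You instead treat the lemma as an immediate corollary of Theorem~\ref{thm:convergence}: the reconstruction $T_{L(\delta)}(y)$ is defined solely in terms of $y$ and $L(\delta)$ and does not see which solution is under consideration, so if the theorem forces it to converge to both $x_1^\dagger$ and $x_2^\dagger$, uniqueness of limits in $X$ finishes the argument. Your approach is conceptually cleaner---it makes transparent that this lemma is really a consequence of the convergence result and of the solution-independence of $T_L$---whereas the paper's computation is more self-contained, essentially re-deriving the key estimate from the proof of Theorem~\ref{thm:convergence} inline rather than invoking the theorem as a black box.
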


\begin{proof}
For abbreviation, we write $f_L$ and $\varphi_L$ instead of $f_{\theta(L),L}$ and $\varphi_{\theta(L),L}$.
Using $\origA x_1^\dagger = \origA  x_2^\dagger$, it holds
\begin{align}
        & \quad \, \,\|x_1^\dagger - x_2^\dagger\| \notag\\
        &\leqslant  \|x_1^\dagger - \origA ^* \origA  x_1^\dagger - f_L(x_1^\dagger)\| + \|\origA ^* \origA  x_1^\dagger + f_L(x_1^\dagger) - \origA ^* \origA  x_2^\dagger - f_L(x_2^\dagger) \| + \|\origA ^* \origA  x_2^\dagger + f_L(x_2^\dagger) - x_2^\dagger\| \notag\\
        &=  \|\varphi_L(x_1^\dagger) - \origA ^* \origA  x_1^\dagger \| + \|f_L(x_1^\dagger) - f_L(x_2^\dagger) \| + \|\origA ^* \origA  x_2^\dagger - \varphi_L(x_2^\dagger)\| \notag\\
        &\leqslant  \|\origA ^* \origA  x_1^\dagger - \varphi_L(x_1^\dagger)\| + L \|x_1^\dagger - x_2^\dagger \| + \|\origA ^* \origA  x_2^\dagger - \varphi_L(x_2^\dagger)\|.
\end{align}
Subtracting $L \|x_1^\dagger - x_2^\dagger \|$, it follows
\begin{equation}
    (1-L) \|x_1^\dagger - x_2^\dagger\| \leqslant  \|\origA ^* \origA  x_1^\dagger - \varphi_L(x_1^\dagger)\| + \|\origA ^* \origA  x_2^\dagger - \varphi_L(x_2^\dagger)\|.
    \end{equation}
Since $x_1^\dagger, x_2^\dagger$ both fulfill the local approximation property \eqref{eq:conv_trainingproperty_ass}, there exists an index function $\psi$ such that $ (1-L) \|x_1^\dagger - x_2^\dagger\| = \mathcal{O}((1-L) \psi(1-L) )$ must hold. %Due to the properties of $\psi$, this is only possible for $x_1^\dagger = x_2^\dagger$ as $L\to 1$.
This implies $\|x_1^\dagger - x_2^\dagger\| = \mathcal{O}(\psi(1-L))$, which is only possible for $x_1^\dagger = x_2^\dagger$.
\end{proof}

\subsection{Diagonal architecture}
\label{sec:diagonal_architecture}

In order to investigate relations to established and well-studied spectral regularization methods, we continue with the introduction of a particular network design, which is exploited in the remainder of the manuscript. 
%Spectral regularization methods make use of the SVD $(u_j, v_j, \sigma_j)_{j \in \N}$ of the forward operator $\origA$ to decompose the infinite dimensional inverse problem $\origA x = y$ into 1D problems of the form $\sigma_j \langle x, v_j \rangle = \langle y, v_j \rangle$.
The idea of spectral regularization is to decompose the data using the SVD $(u_j, v_j, \sigma_j)_{j \in \N}$ of $\origA$ and apply filter functions to the singular values. Analogously, we consider a network architecture consisting of subnetworks $f_{\theta, j}:\R \rightarrow \R$, $j\in \N$, i.e., 
\begin{equation}\label{eq:diagonal_architecture_residual}
     f_\theta(x) = \sum_{j\in \N}  f_{\theta, j}(\langle x , v_j \rangle ) v_j,
\end{equation}
where all the components $\langle x, v_j \rangle$ of the data are processed separately. We refer to this architecture as \textit{diagonal} architecture, which is also illustrated in Figure~\ref{fig:diagonal_architecture}.
The naming is motivated by the similarity of the structure of $f_\theta$ to a matrix multiplication with a diagonal matrix w.r.t the basis $\{v_j\}_{j\in\N}$. This architecture choice is, of course, less expressive than general architectures, but it has the useful benefit that this way, the iResNet can be analyzed like a filter-based regularization scheme. Besides, it enables us to simplify the infinite-dimensional inverse problem $\origA x = y$ to 1D subproblems of the form $\sigma_j \langle x, v_j \rangle = \langle y, u_j \rangle$, which will be exploited in section \ref{sec:specialization}.

In case of a diagonal architecture, the Lipschitz constraint $\mathrm{Lip}(f_\theta ) \leqslant L$ is fulfilled if $\mathrm{Lip}(f_{\theta,j} ) \leqslant L$ holds for all $j\in\N$, i.e., for any $x,y \in X$ it holds 
\begin{equation}
 \| f_\theta (x) - f_\theta(y)\|^2 = \sum_{j\in\N} | f_{\theta,j}( \langle x,v_j\rangle ) - f_{\theta,j}( \langle y,v_j\rangle ) |^2 \leqslant L^2    \sum_{j\in\N} |  \langle x-y ,v_j\rangle  |^2 \leqslant L^2 \|x-y\|^2.
\end{equation}
The local approximation property \eqref{eq:conv_trainingproperty_ass}, in this case for $x^\dagger \in \mathcal{N}(\origA)^\perp$, implies 
\begin{equation}
| f_{\theta(L),L,j}(x_j^\dagger) - (1-\sigma_j^2) x_j^\dagger|  = \mathcal{O}((1-L) \psi(1-L))  
\end{equation}
for any $j\in \N$, where $x_j^\dagger=\langle x^\dagger , v_j\rangle$.
Note that for infinite-dimensional operators, the constants in $\mathcal{O}((1-L) \psi(1-L))$ must be an $\ell^2$ sequence with respect to $j$ to obtain the implication in the opposite direction and thus equivalence.

The particular aim of using the diagonal architecture is now to find a filter function that defines a spectral regularization scheme that is equivalent to $T_L = \varphi_\theta^{-1} \circ \origA^*$. Since filter-based regularization methods are usually linear and we want to also allow for nonlinear architectures, our filter function $r_L$ must be data-dependent. Therefore, we define $r_L \colon \R_+ \times \R \to \R$ such that for $z \in \mathcal{R}(\origA ^\ast)$
\begin{align} \label{eq:data_dep_filterfunc}
    \varphi_\theta^{-1}(z) = \sum_{j \in \mathbb{N}} r_L (\sigma_j^2, \langle z, v_j \rangle) \langle z, v_j \rangle v_j, %\qquad \text{for } z \in \mathcal{R}(A)
\end{align}
or for $y \in Y$
\begin{equation}
\label{eq:data_dep_filterfunc_y}
   T_L(y)= \varphi_\theta^{-1}(\origA ^\ast y) =  \sum_{j \in \mathbb{N}} r_L (\sigma_j^2, \sigma_j \langle y, u_j \rangle) \sigma_j \langle y, u_j \rangle v_j
\end{equation}
holds. The first argument of $r_L$ is the singular value (as usual), and the data-dependency comes via the second argument.
This can be seen as a nonlinear extension to the established filter theory of linear regularization methods \cite{rieder2013keine, louis1989inverse}, where $r_L$ would depend on $\sigma_j^2$ only.
The subnetworks thus play an important role in defining the filter functions via
\begin{equation}
\label{eq:filter_function}
(Id-f_{\theta,L,j})^{-1}(s) = r_L(\sigma_j^2,s) s.
\end{equation}
For this to be well-defined, we need to assume that all singular values $\sigma_j > 0$ have a multiplicity of one. In case of $\sigma_j = \sigma_k$ for $j \neq k$ (multiplicity greater than one) one must ensure that $f_{\theta, L, j}$ and $f_{\theta, L, k}$ also coincide. In practice, this could be realized by sharing the weights of these networks.

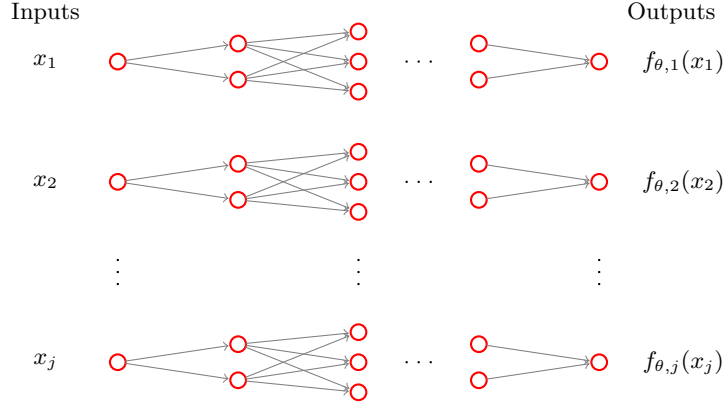
\begin{figure}[ht]
\centering
\begin{tikzpicture}[scale=0.8]
\small
%\tiny
\draw[color=red, thick] (0,4) circle (0.13);
\draw[->, color=gray,  shorten >=1.3mm, shorten <=1mm] (0,4) -- (2,4.3);
\draw[->, color=gray,  shorten >=1.3mm, shorten <=1mm] (0,4) -- (2,3.7);
\draw[color=red, thick] (2,4.3) circle (0.13);
\draw[color=red, thick] (2,3.7) circle (0.13);
\draw[->, color=gray,  shorten >=1.3mm, shorten <=1mm] (2,4.3) -- (4,4.5);
\draw[->, color=gray,  shorten >=1.3mm, shorten <=1mm] (2,4.3) -- (4,4);
\draw[->, color=gray,  shorten >=1.3mm, shorten <=1mm] (2,4.3) -- (4,3.5);
\draw[->, color=gray,  shorten >=1.3mm, shorten <=1mm] (2,3.7) -- (4,4.5);
\draw[->, color=gray,  shorten >=1.3mm, shorten <=1mm] (2,3.7) -- (4,4);
\draw[->, color=gray,  shorten >=1.3mm, shorten <=1mm] (2,3.7) -- (4,3.5);
\draw[color=red, thick] (4,4.5) circle (0.13);
\draw[color=red, thick] (4,4) circle (0.13);
\draw[color=red, thick] (4,3.5) circle (0.13);
\filldraw[color=black] (4.8,4) circle (0.01);
\filldraw[color=black] (5,4) circle (0.01);
\filldraw[color=black] (5.2,4) circle (0.01);
\draw[color=red, thick] (6,4.3) circle (0.13);
\draw[color=red, thick] (6,3.7) circle (0.13);
\draw[->, color=gray,  shorten >=1.3mm, shorten <=1mm] (6,4.3) -- (8,4);
\draw[->, color=gray,  shorten >=1.3mm, shorten <=1mm] (6,3.7) -- (8,4);
\draw[color=red, thick] (8,4) circle (0.13);

\draw[color=red, thick] (0,2) circle (0.13);
\draw[->, color=gray,  shorten >=1.3mm, shorten <=1mm] (0,2) -- (2,2.3);
\draw[->, color=gray,  shorten >=1.3mm, shorten <=1mm] (0,2) -- (2,1.7);
\draw[color=red, thick] (2,2.3) circle (0.13);
\draw[color=red, thick] (2,1.7) circle (0.13);
\draw[->, color=gray,  shorten >=1.3mm, shorten <=1mm] (2,2.3) -- (4,2.5);
\draw[->, color=gray,  shorten >=1.3mm, shorten <=1mm] (2,2.3) -- (4,2);
\draw[->, color=gray,  shorten >=1.3mm, shorten <=1mm] (2,2.3) -- (4,1.5);
\draw[->, color=gray,  shorten >=1.3mm, shorten <=1mm] (2,1.7) -- (4,2.5);
\draw[->, color=gray,  shorten >=1.3mm, shorten <=1mm] (2,1.7) -- (4,2);
\draw[->, color=gray,  shorten >=1.3mm, shorten <=1mm] (2,1.7) -- (4,1.5);
\draw[color=red, thick] (4,2.5) circle (0.13);
\draw[color=red, thick] (4,2) circle (0.13);
\draw[color=red, thick] (4,1.5) circle (0.13);
\filldraw[color=black] (4.8,2) circle (0.01);
\filldraw[color=black] (5,2) circle (0.01);
\filldraw[color=black] (5.2,2) circle (0.01);
\draw[color=red, thick] (6,2.3) circle (0.13);
\draw[color=red, thick] (6,1.7) circle (0.13);
\draw[->, color=gray,  shorten >=1.3mm, shorten <=1mm] (6,2.3) -- (8,2);
\draw[->, color=gray,  shorten >=1.3mm, shorten <=1mm] (6,1.7) -- (8,2);
\draw[color=red, thick] (8,2) circle (0.13);

\filldraw[color=black] (0,0.7) circle (0.01);
\filldraw[color=black] (0,0.5) circle (0.01);
\filldraw[color=black] (0,0.3) circle (0.01);

\filldraw[color=black] (4,0.7) circle (0.01);
\filldraw[color=black] (4,0.5) circle (0.01);
\filldraw[color=black] (4,0.3) circle (0.01);

\filldraw[color=black] (8,0.7) circle (0.01);
\filldraw[color=black] (8,0.5) circle (0.01);
\filldraw[color=black] (8,0.3) circle (0.01);

\draw[color=red, thick] (0,-1) circle (0.13);
\draw[->, color=gray, shorten >=1.3mm, shorten <=1mm] (0,-1) -- (2,-0.7);
\draw[->, color=gray, shorten >=1.3mm, shorten <=1mm] (0,-1) -- (2,-1.3);
\draw[color=red, thick] (2,-0.7) circle (0.13);
\draw[color=red, thick] (2,-1.3) circle (0.13);
\draw[->, color=gray, shorten >=1.3mm, shorten <=1mm] (2,-0.7) -- (4,-0.5);
\draw[->, color=gray, shorten >=1.3mm, shorten <=1mm] (2,-0.7) -- (4,-1);
\draw[->, color=gray, shorten >=1.3mm, shorten <=1mm] (2,-0.7) -- (4,-1.5);
\draw[->, color=gray, shorten >=1.3mm, shorten <=1mm] (2,-1.3) -- (4,-0.5);
\draw[->, color=gray, shorten >=1.3mm, shorten <=1mm] (2,-1.3) -- (4,-1);
\draw[->, color=gray, shorten >=1.3mm, shorten <=1mm] (2,-1.3) -- (4,-1.5);
\draw[color=red, thick] (4,-0.5) circle (0.13);
\draw[color=red, thick] (4,-1) circle (0.13);
\draw[color=red, thick] (4,-1.5) circle (0.13);
\filldraw[color=black] (4.8,-1) circle (0.01);
\filldraw[color=black] (5,-1) circle (0.01);
\filldraw[color=black] (5.2,-1) circle (0.01);
\draw[color=red, thick] (6,-0.7) circle (0.13);
\draw[color=red, thick] (6,-1.3) circle (0.13);
\draw[->, color=gray, shorten >=1.3mm, shorten <=1mm] (6,-0.7) -- (8,-1);
\draw[->, color=gray, shorten >=1.3mm, shorten <=1mm] (6,-1.3) -- (8,-1);
\draw[color=red, thick] (8,-1) circle (0.13);

\draw[] (-1.2,4.8) node {Inputs};
\draw[] (-1.2,4) node {$x_1$};
\draw[] (-1.2,2) node {$x_2$};
\draw[] (-1.2,-1) node {$x_j$};

\draw[] (9.2,4.8) node {Outputs};
\draw[] (9.4,4) node {$f_{\theta,1}(x_1)$ };
\draw[] (9.4,2) node {$f_{\theta,2}(x_2)$ };
\draw[] (9.4,-1) node {$f_{\theta,j}(x_j)$ };

\end{tikzpicture}
    \caption{\textit{Diagonal} structure of the residual function including subnetworks $f_{\theta,j}: \R \to \R$ acting on $x_j=\langle x, v_j \rangle$.}
    \label{fig:diagonal_architecture}
\end{figure}

\begin{remark}
Some authors define filter functions in terms of the original problem with respect to the generalized inverse $\origA ^\dagger$ such that
\begin{align} \label{eq:data_dep_filterfunc_original_problem}
    \varphi_\theta^{-1}(\origA ^\ast y) = \sum_{j \in \mathbb{N}} F_L (\sigma_j, \langle y, u_j \rangle) \frac{1}{\sigma_j} \langle y, u_j \rangle v_j, \qquad \text{with } F_L(\sigma,s)=\sigma^2 r_L(\sigma^2, \sigma s).
\end{align}
Note that the choice of the representation, either in terms of $r_L$ or in terms of $F_L$, depends on personal preference (compare, for example, \cite{rieder2013keine} and \cite{louis1989inverse}). In the following, we represent the filter function in terms of $r_L$.
\end{remark}

One simple nonlinear dependence on $s$ will be taken into account explicitly in the following, as it becomes relevant in later investigations. In analogy to the bias in neural network architectures, we consider a linear filter framework with bias, i.e., 
\begin{equation}\label{eq:linear_filter_with_bias}
T_L(y) = \hat{b}_L +
%\underset{=:\hat{T}_L(y)}{ \underbrace{
\sum_{j\in \N} \hat{r}_L(\sigma_j^2) \sigma_j \langle y , u_j \rangle v_j
%}}
\end{equation}
where $\hat{b}_L \in X$ is a bias term and $\hat{r}_L$ is a classical filter function. 
With additional simple assumptions on $\hat{b}_L$, this becomes a regularization method. For the sake of completeness, we provide the following result.

\begin{lemma}[Filter-based spectral regularization with bias]
\label{lem:filter_regularization_with_bias}
Let $\hat{r}_L:[0,\|\origA \|^2] \to \R$ be a piecewise continuous function and let $b_L \in X$ for $L\in [0,1)$. Furthermore, let
\begin{itemize}
    \item[(i)] $\lim_{L\to 1} \hat{r}_L(\sigma_j^2) =\frac{1}{\sigma_j^2}$ for any $\sigma_j$, $j \in \N$,
    \item[(ii)] $\exists\, 0 < C < \infty: \ \sigma_j^2\, |\hat{r}_L (\sigma_j^2) | \leqslant C$  for any $\sigma_j$, $j \in \N$ and $L\in [0,1)$, 
    \item[(iii)] $\hat{b}_L \in X$ for $L\in [0,1)$ and $\lim_{L\to 1} \| \hat{b}_L \| =0$
\end{itemize}
hold.
Let $x^\dagger\in \mathcal{N}(\origA )^\perp$ be a solution of the problem $\origA x=y$ for $y\in R(\origA )$, the operator $T_L:Y \rightarrow X$ be given by \eqref{eq:linear_filter_with_bias} and $y^\delta\in Y$ with $\|y-y^\delta\| \leqslant \delta$.
% be an arbitrary noisy measurements.
In addition, let $L:(0,\infty) \rightarrow [0,1)$ be such that 
\begin{equation}\label{eq:conv_param_choice_ass_linear_filter_with_bias}
    L(\delta) \rightarrow 1 \quad \land \quad \delta \sqrt{ \sup \big\lbrace |\hat{r}_{L(\delta)}(\sigma_i)| \, \big| \, i\in \N \big\rbrace }  \rightarrow 0
\end{equation}
as $\delta \to 0$. 
Then, $T_L$ is a regularization method and for $x_{L(\delta)}^\delta := T_{L(\delta)}(y^\delta)$ it holds $\lim_{\delta \to 0} \| x_{L(\delta)}^\delta - x^\dagger\| =0$. 
%\begin{equation}
%    \delta \to 0 \qquad \Rightarrow \qquad\| x_{L(\delta)}^\delta - x^\dagger\| \rightarrow 0.
%\end{equation}
\end{lemma}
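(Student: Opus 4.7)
The plan is to adapt the classical bias--variance decomposition from spectral filter theory to the present setting with an additive bias $\hat{b}_L$. I would split
\begin{equation*}
\|x_{L(\delta)}^\delta - x^\dagger\| \leqslant \|T_{L(\delta)}(y^\delta) - T_{L(\delta)}(y)\| + \|T_{L(\delta)}(y) - x^\dagger\|
\end{equation*}
and handle the two summands independently. The bias is handled by cancellation in the first summand and by assumption (iii) in the second. Well-definedness and continuity of $T_L$ for each fixed $L$ can be read off the same Parseval estimate used for the data-error term, using that piecewise continuity of $\hat{r}_L$ on the compact interval $[0,\|\tilde{A}\|^2]$ guarantees $\sup_j |\hat{r}_L(\sigma_j^2)| < \infty$ for fixed $L$.

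For the data-propagation term, $\hat{b}_{L(\delta)}$ cancels in the difference, and Parseval's identity reduces the squared norm to $\sum_j |\hat{r}_{L(\delta)}(\sigma_j^2)|^2 \sigma_j^2 |\langle y^\delta - y, u_j\rangle|^2$. The key algebraic step is the factorization $|\hat{r}_L(\sigma_j^2)|^2 \sigma_j^2 = (\sigma_j^2 |\hat{r}_L(\sigma_j^2)|) \cdot |\hat{r}_L(\sigma_j^2)|$: applying (ii) to the first factor and pulling out a supremum over the second yields a bound of order $C \delta^2 \sup_j |\hat{r}_{L(\delta)}(\sigma_j^2)|$. Taking square roots matches the parameter-choice condition \eqref{eq:conv_param_choice_ass_linear_filter_with_bias} exactly, so this summand vanishes as $\delta \to 0$.

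For the approximation term, $x^\dagger \in \mathcal{N}(\tilde{A})^\perp$ together with $\langle y, u_j\rangle = \sigma_j \langle x^\dagger, v_j\rangle$ rewrites the error as $\hat{b}_{L(\delta)} + \sum_j (\hat{r}_{L(\delta)}(\sigma_j^2) \sigma_j^2 - 1) \langle x^\dagger, v_j\rangle v_j$. The bias piece vanishes by (iii). For the remaining sum I would apply dominated convergence in $\ell^2$: the coefficients converge pointwise to zero by (i), the uniform majorant $(C+1)^2 |\langle x^\dagger, v_j\rangle|^2$ comes from (ii), and summability holds because $x^\dagger \in X$. The main delicate step is the factorization trick in the data-error bound, which is precisely what turns the mild parameter-choice condition $\delta \sqrt{\sup_j |\hat{r}_{L(\delta)}(\sigma_j^2)|} \to 0$ into a sufficient condition rather than forcing the stronger $\delta \sup_j |\sigma_j \hat{r}_{L(\delta)}(\sigma_j^2)| \to 0$; everything else is standard spectral-filter bookkeeping.
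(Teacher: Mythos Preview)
Your argument is correct. The paper's proof, however, takes a shorter route: it first strips off the bias by writing $\tilde{x}_{L(\delta)}^\delta := T_{L(\delta)}(y^\delta) - \hat{b}_{L(\delta)}$, bounds $\|x_{L(\delta)}^\delta - x^\dagger\| \leqslant \|\hat{b}_{L(\delta)}\| + \|\tilde{x}_{L(\delta)}^\delta - x^\dagger\|$, and then delegates the second term entirely to the classical linear filter convergence result (cited from Rieder, \emph{Keine Probleme mit Inversen Problemen}). Your data--approximation split and the explicit Parseval/dominated-convergence computations are precisely the content of that cited result, worked out in full rather than quoted. So the two proofs differ only in organization and level of detail: the paper separates the bias first and invokes a black box, whereas you let the bias cancel in the data term, peel it off in the approximation term, and unpack the black box by hand. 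Your version is self-contained; the paper's is two lines plus a citation.
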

\begin{proof}
By defining $\tilde{x}_{L(\delta)}^\delta := T_{L(\delta)} (y^\delta) - \hat{b}_{L(\delta)}$ (removing the bias), we get an ordinary linear regularization method (see, e.g., \cite[Corr. 3.3.4]{rieder2013keine}). It holds
\begin{equation}
\| x_{L(\delta)}^\delta - x^\dagger\| \leqslant \| x_{L(\delta)}^\delta - \tilde{x}_{L(\delta)}^\delta\| + \| \tilde{x}_{L(\delta)}^\delta - x^\dagger\| = \| \hat{b}_{L(\delta)}\| + \| \tilde{x}_{L(\delta)}^\delta - x^\dagger\|.
\end{equation}
Since $\|\hat{b}_{L(\delta)}\| \to 0$ by assumption and $ \tilde{x}_{L(\delta)}^\delta \to x^\dagger$ by standard theory, $T_L$ is a regularization method. 
%By standard arguments (see, e.g., \cite[Corr. 3.3.4]{rieder2013keine}) $\hat{T}_L$ defines a linear regularization method for $\origA ^\dagger$. 
%Exploiting (iii) and the estimates
%\begin{align}
%& \|T_L(y^\delta) \| \leqslant \|\hat{T}_L(y^\delta) \| + \| b_L\| \quad \text{ for } L\in [0,1),\\
%\land &  \| x_{L(\delta)}^\delta - x^\dagger\| \leqslant \|\hat{T}_{L(\delta)}(y^\delta) - x^\dagger \| + \| b_{L(\delta)}\| 
%\end{align}
%concludes the proof due to the properties of $\hat{T}_L$.
\end{proof}

% -------------------------------
\section{Approximation training of specialized architectures}
\label{sec:specialization}
% -------------------------------

Besides the architecture choice and the available data, the loss function for network training 
is an important ingredient. 
Having regularization properties in mind, a natural choice for training the iResNet is to approximate the forward operator $A \colon X \to X$, $A = \origA ^* \origA $, on a given training dataset $\{x^{(i)}\}_{i\in \{1,\hdots,N\}}  \subset X$. This is also strongly motivated by the structure of the local approximation property \eqref{eq:conv_trainingproperty_ass}  to obtain convergence guarantees.
The training of $\varphi_\theta = Id - f_\theta$ then consists of solving
\begin{equation} \label{eq:training_minproblem}
    \min_{\theta \in \Theta} l(\varphi_\theta,A)=\min_{\theta \in \Theta}  \frac{1}{N} \sum_{i} \|\varphi_\theta(x^{(i)}) - A x^{(i)} \|^2  \quad \text{s.t. } \mathrm{Lip}(f_\theta) \leqslant  L
    %\left\lbrace \sum_{i} \|\varphi_\theta(x^{(i)}) - A x^{(i)} \|^2 \, \bigg| \, \mathrm{Lip}(f_\theta) \leqslant  L \right\rbrace
\end{equation}
with $L < 1$ as a hyperparameter, which we refer to as \textit{approximation training}.
Here, we restrict ourselves to the noise-free case as the outcome of the approximation training is independent for $N \to\infty$ (assuming stochastic independence between data and noise). 

In the following, we analyze to which extent $\varphi_\theta^{-1}$ acts as a regularized inverse of $A$ for different simple diagonal iResNet architectures trained according to \eqref{eq:training_minproblem}.
%different simple iResNet architectures falling into the category of being a diagonal architecture which were trained this way.
We compute the particular network parameters $\theta$ %which solve \eqref{eq:training_minproblem}
and
derive the corresponding data-dependent filter function $r_L \colon \mathbb{R} \times \mathbb{R} \to \mathbb{R}$ (see Section \ref{sec:diagonal_architecture}). Additionally, we check whether the local approximation property \eqref{eq:conv_trainingproperty_ass} is fulfilled.
For this purpose, the assumptions on the architecture as well as on the dataset used for the training must be specified.

% --------------------------------------------
\subsection{One-parameter-network -- Tikhonov}
% --------------------------------------------

We begin with a very simple linear network, which only has one single scalar learnable parameter. We show that this architecture is equivalent to Tikhonov regularization if trained appropriately.

\begin{lemma}
\label{lem:one_parameter_network}
      Let $(v_j, \sigma_j^2)_j$ be the eigenvectors and eigenvalues of $A$ and let $\varphi_\theta = Id - f_\theta$ be an iResNet which solves \eqref{eq:training_minproblem} with
    \begin{itemize}
        \item[(i)] $f_\theta = k (Id - A)$, where $\theta = k$ and $\Theta = \R$ (\textbf{architecture} assumption),
        \item[(ii)] the training dataset $\{x^{(i)}\}_{i\in \{1,\hdots,N\}}$ contains at least one $x^{(i)}$ s.t.\ $A x^{(i)} \neq x^{(i)}$ (\textbf{dataset} assumption).
    \end{itemize}
    Then, the solution of \eqref{eq:training_minproblem} is $k = L$ and \eqref{eq:data_dep_filterfunc} holds with
    \begin{equation}\label{eq:filter-Tikhonov}
        r_L(\sigma^2, s) = \frac{1}{L} \cdot \frac{1}{\alpha + \sigma^2}, \qquad \alpha = \frac{1-L}{L},
    \end{equation}
    for any $s \in \R$.
%    and
%    \begin{equation}
%        F_L(\sigma,s) = \frac{1}{L} \cdot \frac{\sigma^2}{\alpha + \sigma^2}, \qquad \alpha = \frac{1-L}{L},
%    \end{equation}
%    respectively, for any $s\in\R$.
\end{lemma}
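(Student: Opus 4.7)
The plan is to reduce the training minimization \eqref{eq:training_minproblem} to a one-dimensional constrained quadratic in the scalar $k$, identify its minimizer on the admissible set, and then read off the filter by diagonalizing the resulting linear $\varphi_\theta$ in the singular basis $\{v_j\}$.

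First I would expand $\varphi_\theta(x) = x - k(x - Ax) = (1-k)\,x + k\,Ax$ and observe that each training residual factors as $\varphi_\theta(x^{(i)}) - A x^{(i)} = (1-k)(x^{(i)} - A x^{(i)})$. The loss therefore collapses to $(1-k)^2\,C$ with $C = \tfrac{1}{N}\sum_i \|x^{(i)} - A x^{(i)}\|^2 > 0$, where positivity of $C$ is exactly assumption (ii). This is a strictly convex quadratic in $k$ whose unconstrained minimizer is $k = 1$.

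Next I would translate the Lipschitz constraint. Since $f_\theta = k(Id - A)$ is linear, its Lipschitz constant equals the operator norm $|k| \cdot \|Id - A\|$. The self-adjoint operator $A = \tilde{A}^*\tilde{A}$ has spectrum contained in $[0, \|\tilde{A}\|^2] = [0,1]$, and $0$ belongs to this spectrum either from a nontrivial $\mathcal{N}(\tilde{A})$ or from $\sigma_j \to 0$ in the countably infinite singular system, so $\|Id - A\| = 1$. The constraint thus reads $|k| \leqslant L$, and minimizing $(1-k)^2$ over $[-L, L]$ selects the admissible point closest to $1$, namely $k = L$, as claimed.

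Finally, with $k = L$ fixed, $\varphi_\theta = (1-L)\,Id + L\,A$ is a bounded, self-adjoint, positive operator on $X$. For any $z \in \mathcal{R}(\tilde{A}^*)$, I would decompose the unique preimage $x = \varphi_\theta^{-1}(z)$ along $\mathcal{N}(A)^\perp \oplus \mathcal{N}(A)$: because $z \in \overline{\mathcal{R}(\tilde{A}^*)} = \mathcal{N}(A)^\perp$ and $\varphi_\theta$ preserves this splitting with factor $(1-L)$ on $\mathcal{N}(A)$, the $\mathcal{N}(A)$-component of $x$ must vanish. On $\mathcal{N}(A)^\perp$ the operator acts diagonally as multiplication by $(1-L) + L\sigma_j^2$ in the $v_j$-coordinates, so inverting coefficient-wise and factoring out $1/L$ yields
\[
\langle \varphi_\theta^{-1}(z), v_j\rangle = \frac{1}{L}\cdot \frac{1}{\alpha + \sigma_j^2}\,\langle z, v_j\rangle,\qquad \alpha = \tfrac{1-L}{L},
\]
which is exactly \eqref{eq:data_dep_filterfunc} with the claimed $s$-independent filter $r_L(\sigma^2, s) = L^{-1}(\alpha + \sigma^2)^{-1}$.

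The only mildly delicate point is the identity $\|Id - A\| = 1$: without it the Lipschitz constraint would read $|k| \leqslant L/\|Id - A\|$ and could in principle permit a different optimum. Under the paper's standing ill-posed setting this identity is a one-line spectral observation, so it is a sanity check rather than a genuine obstacle; everything else is linear algebra in the SVD basis.
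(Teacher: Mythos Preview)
Your proposal is correct and follows essentially the same route as the paper: factor the loss as $(1-k)^2$ times a positive constant, read off the constraint $|k|\leqslant L$, pick $k=L$, and invert $\varphi_\theta$ coefficient-wise in the $\{v_j\}$ basis to obtain the Tikhonov-type filter. Your added justification that $\|Id-A\|=1$ and your explicit handling of the $\mathcal{N}(A)$-component are points the paper's proof simply asserts or leaves implicit, so if anything you are slightly more careful.
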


The proof of the lemma can be found in Appendix~\ref{sec:proof_of_one_parameter_network}.
The derived filter function corresponds to the Tikhonov method (with reference element)
\begin{equation}
    \min_{x \in X} \| \origA x - y^\delta \|^2 + \alpha \|x - \origA^* y^\delta\|^2
\end{equation}
as one can see by considering the first-order optimality condition, which implies
\begin{equation}
    x = \sum_j \frac{1+\alpha}{\alpha + \sigma_j^2} \sigma_j \langle y^\delta, u_j \rangle v_j
\end{equation}
and using the fact that $\frac{1}{L} = 1 + \alpha$. It is illustrated in Figure~\ref{fig:Tik_filter}. We plot $\sigma^2 r_L(\sigma^2, s)$ since the multiplication of the filter function with $\sigma^2$ corresponds to the concatenation $\varphi_\theta^{-1} \circ A$ which emphasizes the regularizing effect. % and therefore the effect of the regularization is emphasized.

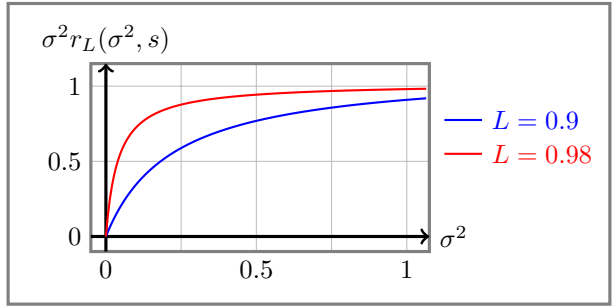
\begin{figure}[ht]
\centering
%\begin{tikzpicture}
%  \draw[step=1.0, lightgray, thin] (-0.2,-0.2) grid (4.3,3.8);
%  \draw[gray, very thick] (-0.2,-0.2) rectangle (4.3,3.8);
%  \draw[gray, very thick] (-1.3,-0.9) rectangle (6.7,4.6);
%  \draw[->, thick, very thick] (-0.2, 0) -- (4.3, 0) node[right] {$\sigma$};
%  \draw[->, thick, very thick] (0, -0.2) -- (0, 3.8) node[above] {$F_L(\sigma)$};
%  \draw[scale=1, domain=0:4.25, variable=\t, green, thick] plot[samples=161] ({\t}, {1/0.9*0.25*\t/((1-0.9)/0.9 + 0.25*0.25*\t*\t)});
%  \draw[scale=1, domain=0:4.25, variable=\t, red, thick] plot[samples=161] ({\t}, {1/0.98*0.25*\t/((1-0.98)/0.98 + 0.25*0.25*\t*\t)});
%  \draw[scale=1, domain=1.05:4.25, smooth, variable=\t, gray, thick, dashed] plot[samples=201] ({\t}, {1/(0.25*\t)});
  
%  \draw (-0.2,0) node[left]  {0};
%  \draw (-0.2,1) node[left]  {1};
%  \draw (-0.2,2) node[left]  {2};
%  \draw (-0.2,3) node[left]  {3};
  
%  \node[below] at (0,-0.2) {$0$};
%  \node[below] at (2,-0.2) {$0.5$};
%  \node[below] at (4,-0.2) {$1$};
  
%  \draw[green, thick] (4.5,1.55) -- (5,1.55) node[right] {$L=0.9$};
%  \draw[red, thick] (4.5,1.1) -- (5,1.1) node[right] {$L=0.98$};
%  \draw[gray, thick, dashed] (4.5,0.65) -- (5,0.65) node[right] {$\frac{1}{\sigma}$};
%\end{tikzpicture}
\begin{tikzpicture}
  \draw[step=1.0, lightgray, thin] (-0.2,-0.2) grid (4.3,2.3);
  \draw[gray, very thick] (-0.2,-0.2) rectangle (4.3,2.3);
  \draw[gray, very thick] (-1.3,-0.9) rectangle (6.7,3.1);
  \draw[->, thick, very thick] (-0.2, 0) -- (4.3, 0) node[right] {$\sigma^2$};
  \draw[->, thick, very thick] (0, -0.2) -- (0, 2.3) node[above] {$\sigma^2 r_L(\sigma^2, s)$};
  \draw[scale=2, domain=0:2.13, variable=\t, blue, thick] plot[samples=161] ({\t}, {1/0.9*0.25*\t/((1-0.9)/0.9 + 0.25*\t)});
  \draw[scale=2, domain=0:2.13, variable=\t, red, thick] plot[samples=161] ({\t}, {1/0.98*0.25*\t/((1-0.98)/0.98 + 0.25*\t)});
  
  \draw (-0.2,0) node[left]  {0};
  \draw (-0.2,1) node[left]  {0.5};
  \draw (-0.2,2) node[left]  {1};
  
  \node[below] at (0,-0.2) {$0$};
  \node[below] at (2,-0.2) {$0.5$};
  \node[below] at (4,-0.2) {$1$};
  
  \draw[blue, thick] (4.5,1.55) -- (5,1.55) node[right] {$L=0.9$};
  \draw[red, thick] (4.5,1.1) -- (5,1.1) node[right] {$L=0.98$};
\end{tikzpicture}
\caption{%After the training of the iResNet, a reconstruction can be computed by $\varphi_\theta^{-1} (\origA ^* y^\delta)$. Thus, $\varphi_\theta^{-1}$ acts as a regularized version of $(\origA ^*\origA )^{-1}$. 
The plot depicts graphs of $\sigma^2 r_L(\sigma^2, s)$ as given in \eqref{eq:filter-Tikhonov} to visualize the effect of the regularization, which is also known as standard Tikhonov regularization.}
\label{fig:Tik_filter}
\end{figure}

\begin{remark}
This particular choice of the residual function $f_\theta$ is one example where the local approximation property is not fulfilled for any $x^\dagger \in X$ except for eigenvectors corresponding to $\sigma_1^2=1$, i.e., $S=\mathrm{span} \{v_1\}$. 
But the linear nature and the specific spectral representation of the residual network allow for the verification of the weaker constraint in \eqref{eq:weaker_approx_property} being highlighted in Remark~\ref{rem:weaker_approx_property} and being sufficient for convergence. For the residual function $f_{\theta(L),L}=L(Id-A)$ and $x^\dagger \in \mathcal{N}(A)^\bot$ we obtain
\begin{align}
 \| \varphi_{\theta(L),L}^{-1}(A x^\dagger) - x^\dagger \|^2 & = \| ((Id - L(Id-A))^{-1}A  - Id) x^\dagger \|^2 \notag\\ 
 &= \sum_{j \in \N} \bigg( \underset{=\sigma_j^2 r_L(\sigma_j^2)}{\underbrace{\frac{\sigma_j^2}{1 - L (1- \sigma_j^2)}}}  - 1\bigg)^2 |\langle x^\dagger , v_j\rangle|^2,
%& \to 0 \quad \text{as } L \to 1.
 \end{align}
which converges to zero for $L \to 1$.
%{\color{blue}    TBD: Source condition not fulfilled but we know that this is a regualrization}

Alternatively, convergence can be verified by standard arguments following, for example, the line of reasoning in the proof of \cite[Thm. 3.3.3]{louis1989inverse}, where properties of the filter function $F_L(\sigma_j)=\sigma_j^2 r_L(\sigma_j^2)$ are exploited. Since $F_L$ defines the filter function for Tikhonov regularization and is therefore known to fulfill the desired properties, we have convergence for arbitrary $x^\dagger \in \mathcal{N}(A)^\perp$.  
\end{remark}  
\subsection{Affine linear network -- squared soft TSVD}
\label{sec:soft_TSVD}
% -------------------------------------------------

We continue with a slightly more general affine linear architecture $f_\theta(x) = Wx + b$. It can be observed that the linear operator $W: X \to X$ only depends on $A$ and $L$ while the bias vector $b\in X$ is data dependent.

\begin{lemma}
\label{lem:squared_soft_TSVD}
    Let $(v_j, \sigma_j^2)_j$ be the eigenvectors and eigenvalues of $A$ and $\varphi_\theta = Id - f_\theta$ be an iResNet which solves \eqref{eq:training_minproblem} where
    \begin{itemize}
        \item[(i)] $f_\theta(x) = W x + b$, with $\theta = (W, b)$ and 
        \begin{equation}
            \Theta = \left\lbrace (W,b) \in L(X) \times X \, \Bigg| \, \exists (w_j)_{j\in \N},(b_j)_{j\in \N}: \ W = \sum_{j \in \mathbb{N}} w_j \langle \cdot, v_j \rangle v_j, \, b = \sum_{j \in \N} b_j v_j \right\rbrace,
        \end{equation}
        \item[(ii)] the training dataset $\{x^{(i)}\}_{i\in \{1,\hdots,N\}}$ and the mean values $\mu_j = \frac{1}{N} \sum_{i=1}^N \langle x^{(i)}, v_j \rangle$ fulfill
        \begin{equation}
            \forall j \in \N \colon \, \exists i \in \{1, ..., N\} \colon \quad \langle x^{(i)}, v_j \rangle \neq \mu_j.
        \end{equation}
    \end{itemize}
    Then, the solution of the training problem \eqref{eq:training_minproblem} is $w_j =  \min \{1-\sigma_j^2, L \}$, $b_j = \max \{0, 1-L-\sigma_j^2\} \mu_j$ and $\varphi_\theta^{-1} \circ \origA^*$ is equivalent to $T_L$ in \eqref{eq:linear_filter_with_bias} with
    \begin{equation}\label{eq:filter_affine_soft_TSVD}
        \hat{r}_L(\sigma^2) = \frac{1}{\max \{\sigma^2, 1-L \}}, \qquad \hat{b}_L = \frac{1}{1-L}\sum_{j \in \N} b_j v_j = \sum_{\sigma_j^2 < 1-L} \frac{1-L-\sigma_j^2}{1-L} \mu_j v_j.
    \end{equation}
%    respectively 
%    \begin{equation}
%        F_L(\sigma, s) = 
%        \begin{cases}
%           1 & \text{if } \sigma \geqslant \sqrt{1-L},\\
%        \frac{\sigma^2}{1-L} & \text{if } \sigma < \sqrt{1-L}.
%        \end{cases}
%        \label{eq:filter_squared_soft_TSVD}
%    \end{equation}
\end{lemma}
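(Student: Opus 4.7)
The plan is to exploit the spectral structure of the admissible $W$ and $b$ to decouple the training loss into one scalar quadratic program per singular direction, each of which is solvable in closed form. Write $x^{(i)} = \sum_j x_j^{(i)} v_j + x_\perp^{(i)}$ with $x_j^{(i)} := \langle x^{(i)}, v_j\rangle$ and $x_\perp^{(i)} \in \mathcal{N}(A)$. Using $W v_j = w_j v_j$, $W x_\perp^{(i)} = 0$, $A v_j = \sigma_j^2 v_j$, $A x_\perp^{(i)} = 0$, and $b = \sum_j b_j v_j$, a direct computation gives
\begin{equation}
\varphi_\theta(x^{(i)}) - A x^{(i)} = x_\perp^{(i)} + \sum_{j\in\N} \bigl[(1 - w_j - \sigma_j^2) x_j^{(i)} - b_j\bigr] v_j.
\end{equation}
Orthonormality of $\{v_j\}$ and orthogonality to $x_\perp^{(i)}$ split the squared norm, so, up to a $\theta$-independent constant,
\begin{equation}
l(\varphi_\theta, A) = \sum_{j\in\N} g_j(w_j, b_j), \qquad g_j(w_j,b_j) := \frac{1}{N}\sum_{i=1}^N \bigl[(1-w_j-\sigma_j^2) x_j^{(i)} - b_j\bigr]^2.
\end{equation}
Because $W$ is diagonal on the orthonormal family $\{v_j\}$, the constraint $\mathrm{Lip}(f_\theta) \leqslant L$ reduces to $|w_j| \leqslant L$ for every $j$, so the optimisation in \eqref{eq:training_minproblem} decouples completely across $j$.

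I would then solve each one-dimensional program. Setting $a_j := 1 - w_j - \sigma_j^2$ and $s_j^2 := \frac{1}{N}\sum_i (x_j^{(i)} - \mu_j)^2$, a short expansion yields the identity $g_j(w_j,b_j) = a_j^2 s_j^2 + (a_j \mu_j - b_j)^2$. The dataset assumption ensures $s_j^2 > 0$ for every $j$. Since $b_j \in \R$ is unconstrained, the inner minimiser is $b_j = a_j \mu_j$, and the remaining task reduces to $\min_{|w_j|\leqslant L} a_j^2$. The unconstrained minimiser $w_j = 1 - \sigma_j^2$ lies in $[0,1)$, so projecting onto $[-L, L]$ produces $w_j = \min\{1-\sigma_j^2, L\}$ and correspondingly $b_j = \max\{0, 1-L-\sigma_j^2\}\mu_j$, matching the claimed formulas.

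It remains to identify $\varphi_\theta^{-1} \circ \tilde{A}^\ast$ with the bias-augmented filter \eqref{eq:linear_filter_with_bias}. Since $W$ and $b$ are diagonal, the equation $\varphi_\theta(x) = z$ solves coordinate-wise as $\langle x, v_j\rangle = (\langle z, v_j\rangle + b_j)/(1-w_j)$. Substituting $z = \tilde{A}^\ast y$ and using $\langle \tilde{A}^\ast y, v_j\rangle = \sigma_j \langle y, u_j\rangle$, separation of the linear and bias parts yields $\hat{r}_L(\sigma_j^2) = (1-w_j)^{-1}$ and $(\hat{b}_L)_j = b_j/(1-w_j)$. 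Inserting the two regimes $\sigma_j^2 \geqslant 1-L$ (so $w_j = 1-\sigma_j^2$, hence $1-w_j = \sigma_j^2$ and $b_j = 0$) and $\sigma_j^2 < 1-L$ (so $w_j = L$, hence $1-w_j = 1-L$) then produces exactly the formulas \eqref{eq:filter_affine_soft_TSVD}. The only real obstacle is bookkeeping: consistently tracking these two regimes through $w_j$, $b_j$, $\hat{r}_L$, and $\hat{b}_L$. Beyond that, the argument is a routine decoupled, box-constrained quadratic minimisation.
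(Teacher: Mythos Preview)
Your proposal is correct and follows essentially the same approach as the paper: decouple the loss across singular directions using the diagonal structure of $W$ and $b$, reduce the Lipschitz constraint to $|w_j|\leqslant L$, optimise first in $b_j$ then in $w_j$, and finally invert $\varphi_\theta$ coordinate-wise to read off $\hat r_L$ and $\hat b_L$. Your use of the variance decomposition $g_j = a_j^2 s_j^2 + (a_j\mu_j - b_j)^2$ is a slightly slicker way to carry out the inner minimisation than the paper's direct differentiation in $b_j$, but the argument is otherwise the same.
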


The proof of the lemma can be found in Appendix~\ref{sec:proof_of_squared_soft_TSVD}.
The filter function is illustrated in Figure~\ref{fig:filter_soft}.
\begin{remark}
    Note that a similar filter function has been found in the context of an analytic deep prior approach for regularization \cite{dittmer2020}. The authors derived the filter function $F_L(\sigma)=\sigma^2 \hat{r}_L(\sigma^2)$, which is linear for small $\sigma$, and named it \textit{soft TSVD} (soft truncated SVD). In contrast, the filter function in \eqref{eq:filter_affine_soft_TSVD} is a polynomial of degree two for small $\sigma$, which we refer to as \textit{squared soft TSVD}.    
\end{remark}

%\textit{Remark: Filter function known as soft TSVD and in [Analytic Deep Prior] is a proof of regularization properties.}

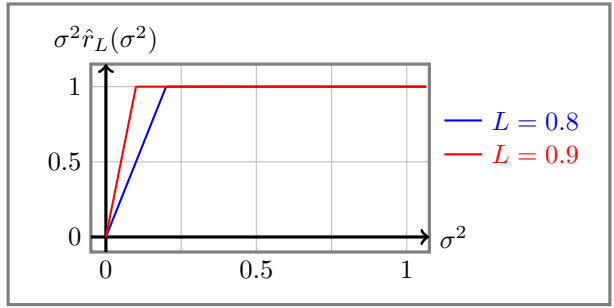
\begin{figure}[h]
\centering
%\begin{tikzpicture}
%  \draw[step=1.0, lightgray, thin] (-0.2,-0.2) grid (4.3,4.8);
%  \draw[gray, very thick] (-0.2,-0.2) rectangle (4.3,4.8);
%  \draw[gray, very thick] (-1.3,-0.9) rectangle (6.7,5.6);
%  \draw[->, thick, very thick] (-0.2, 0) -- (4.3, 0) node[right] {$\sigma$};
%  \draw[->, thick, very thick] (0, -0.2) -- (0, 4.8) node[above] {$F_L(\sigma)$};
%  \draw[scale=1, domain=0:4.25, variable=\t, green, thick] plot[samples=161] ({\t}, {0.25*\t/max(0.25*0.25*\t*\t,1-0.9)});
%  \draw[scale=1, domain=0:4.25, variable=\t, red, thick] plot[samples=161] ({\t}, {0.25*\t/max(0.25*0.25*\t*\t,1-0.95)});
%  \draw[scale=1, domain=0.83:4.25, smooth, variable=\t, gray, thick, dashed] plot[samples=201] ({\t}, {1/(0.25*\t)});
  
%  \draw (-0.2,0) node[left]  {0};
%  \draw (-0.2,1) node[left]  {1};
%  \draw (-0.2,2) node[left]  {2};
%  \draw (-0.2,3) node[left]  {3};
%  \draw (-0.2,4) node[left]  {4};
  
%  \node[below] at (0,-0.2) {$0$};
%  \node[below] at (2,-0.2) {$0.5$};
%  \node[below] at (4,-0.2) {$1$};
  
%  \draw[green, thick] (4.5,1.55) -- (5,1.55) node[right] {$L=0.9$};
%  \draw[red, thick] (4.5,1.1) -- (5,1.1) node[right] {$L=0.95$};
%  \draw[gray, thick, dashed] (4.5,0.65) -- (5,0.65) node[right] {$\frac{1}{\sigma}$};
%\end{tikzpicture}
\begin{tikzpicture}
  \draw[step=1.0, lightgray, thin] (-0.2,-0.2) grid (4.3,2.3);
  \draw[gray, very thick] (-0.2,-0.2) rectangle (4.3,2.3);
  \draw[gray, very thick] (-1.3,-0.9) rectangle (6.7,3.1);
  \draw[->, thick, very thick] (-0.2, 0) -- (4.3, 0) node[right] {$\sigma^2$};
  \draw[->, thick, very thick] (0, -0.2) -- (0, 2.3) node[above] {$\sigma^2 \hat{r}_L(\sigma^2)$};
  \draw[scale=2, domain=0:2.13, variable=\t, blue, thick] plot[samples=161] ({\t}, {0.5*\t/max(0.5*\t,1-0.8)});
  \draw[scale=2, domain=0:2.13, variable=\t, red, thick] plot[samples=161] ({\t}, {0.5*\t/max(0.5*\t,1-0.9)});
  
  \draw (-0.2,0) node[left]  {0};
  \draw (-0.2,1) node[left]  {0.5};
  \draw (-0.2,2) node[left]  {1};
  
  \node[below] at (0,-0.2) {$0$};
  \node[below] at (2,-0.2) {$0.5$};
  \node[below] at (4,-0.2) {$1$};
  
  \draw[blue, thick] (4.5,1.55) -- (5,1.55) node[right] {$L=0.8$};
  \draw[red, thick] (4.5,1.1) -- (5,1.1) node[right] {$L=0.9$};
\end{tikzpicture}
\caption{%After the training of the iResNet, a reconstruction can be computed by $\varphi_\theta^{-1} (\origA ^* y^\delta)$. Thus, $\varphi_\theta^{-1}$ acts as a regularized version of $(\origA ^*\origA )^{-1}$. 
The plot depicts graphs of $\sigma^2 \hat{r}_L(\sigma^2)$ as given in \eqref{eq:filter_affine_soft_TSVD} to visualize the effect of the regularization, which is similar to the known soft TSVD regularization.}
\label{fig:filter_soft}
\end{figure}

Due to the affine linear nature of the found reconstruction scheme, regularization properties can immediately be derived by verifying certain properties of the filter function $\hat{r}_L$ and the bias $\hat{b}_L$ using Lemma~\ref{lem:filter_regularization_with_bias}. This is summarized in the following corollary.
%resp. $F_L$ (see, e.g., \cite{Riederbuch, Louisbuch}). 
 
\begin{corollary}
 The filter based method \eqref{eq:linear_filter_with_bias} with $\hat{r}_L$ and $\hat{b}_L$  given by \eqref{eq:filter_affine_soft_TSVD} fulfills the properties (i)-(iii) in Lemma~\ref{lem:filter_regularization_with_bias}, i.e., in the setting of Lemma~\ref{lem:filter_regularization_with_bias} it is a convergent regularization.
\end{corollary}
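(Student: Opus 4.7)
The plan is to verify the three hypotheses of Lemma~\ref{lem:filter_regularization_with_bias} directly for the pair $\hat{r}_L(\sigma^2)=1/\max\{\sigma^2,1-L\}$ and $\hat{b}_L=\sum_{\sigma_j^2<1-L}\frac{1-L-\sigma_j^2}{1-L}\mu_j v_j$ given by \eqref{eq:filter_affine_soft_TSVD}. Properties (i) and (ii) are essentially by inspection. For (i), fix $j\in\N$; since $\sigma_j>0$, for every $L$ sufficiently close to $1$ we have $\sigma_j^2>1-L$, hence $\hat{r}_L(\sigma_j^2)=1/\sigma_j^2$, which gives the pointwise limit. For (ii), the simple estimate
\begin{equation}
\sigma_j^2\,|\hat{r}_L(\sigma_j^2)|=\frac{\sigma_j^2}{\max\{\sigma_j^2,1-L\}}\leqslant 1
\end{equation}
holds uniformly in $j$ and $L$, so $C=1$ suffices.

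The substantive step is (iii). First I would note that $\hat{b}_L\in X$ because the coefficients $\mu_j=\langle \bar{x},v_j\rangle$ are the Fourier coefficients of the sample mean $\bar{x}=\frac{1}{N}\sum_{i=1}^N x^{(i)}\in X$ with respect to the orthonormal system $\{v_j\}$, so $\sum_j \mu_j^2\leqslant \|\bar{x}\|^2<\infty$. Since $|1-L-\sigma_j^2|\leqslant 1-L$ on the summation set $\{j:\sigma_j^2<1-L\}$, Parseval yields the uniform bound
\begin{equation}
\|\hat{b}_L\|^2=\sum_{\sigma_j^2<1-L}\Bigl(\frac{1-L-\sigma_j^2}{1-L}\Bigr)^{\!2}\mu_j^2\leqslant \sum_{\sigma_j^2<1-L}\mu_j^2\leqslant \|\bar{x}\|^2.
\end{equation}

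To conclude $\|\hat{b}_L\|\to 0$ as $L\to 1$, I would use the summability $\sum_j\mu_j^2<\infty$ together with the positivity of all singular values. Given $\varepsilon>0$, choose $J\in\N$ with $\sum_{j>J}\mu_j^2<\varepsilon$. Since $\sigma_j>0$ for $j=1,\dots,J$, one can pick $L^\ast<1$ such that $1-L<\min_{j\leqslant J}\sigma_j^2$ whenever $L>L^\ast$; then the summation set in $\hat{b}_L$ is contained in $\{j>J\}$, and consequently $\|\hat{b}_L\|^2\leqslant \sum_{j>J}\mu_j^2<\varepsilon$. This establishes (iii), and the corollary then follows by invoking Lemma~\ref{lem:filter_regularization_with_bias}.

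The only delicate point is this last limit: while on any finite-dimensional subspace spanned by finitely many $v_j$ it is immediate, in the genuine infinite-dimensional setting one needs the decay of $(\mu_j)_j$ (which is guaranteed by $\bar{x}\in X$) together with the positivity of the $\sigma_j$ in order to exclude a possible pathological accumulation of the $\sigma_j^2$ at $0$ interacting badly with the tail of $(\mu_j^2)$. The two-sided split into "large $j$" (controlled by summability of $\mu_j^2$) and "small $j$" (controlled by $1-L<\min_{j\leqslant J}\sigma_j^2$) handles this cleanly without further assumptions.
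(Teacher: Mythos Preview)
Your proof is correct and follows exactly the approach the paper intends: the paper states the corollary without proof, treating the verification of (i)--(iii) as immediate, and your argument supplies precisely those details. Your tail-splitting argument for $\|\hat b_L\|\to 0$ (using $\sum_j\mu_j^2<\infty$ via $\mu_j=\langle \bar x,v_j\rangle$ and positivity of the $\sigma_j$) is the natural way to make the implicit step rigorous.
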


The previous result provides a link to the classical theory for linear regularization operators. Besides the well-known classical theory, we further want to investigate the local approximation property \eqref{eq:conv_trainingproperty_ass}
to verify whether the convergence property can also be obtained using Theorem~\ref{thm:convergence}.
As a first step, we derive a technical source condition for $x^\dagger$, which implies the local approximation property.

%being exploited to guarantee convergence of the regularization scheme in Theorem~\ref{thm:convergence}, i.e.,  
%\begin{equation}
%\|\varphi_{\theta(L),L}(x^\dagger) - A x^\dagger\| = \mathcal{O}( (1-L)\psi(1-L) )
%\end{equation}
%for a ground truth vector $x^\dagger \in X$.

%Here, we focus on the trained affine linear architecture $f_{\theta(L),L}(x) = \sum_j \left(w_j \langle x, v_j \rangle + b_j \right) v_j$ where $w_j = \min \{1-\sigma_j^2, L\}$ and $b_j = \max \{0, 1-L-\sigma^2\} \mu_j$ for some $(\mu_j) \in \ell^2$ which depends on the training data. Then we have the following kind of source condition implying the desired local approximation property.

\begin{lemma}\label{lem:squared_soft_local_approx}
Let the network $\varphi_\theta$ and the setting be that of
  %We assume the setting and the network $\varphi_\theta$ of
  Lemma \ref{lem:squared_soft_TSVD}.
  %Let further $x^\dagger \in \mathcal{N}(A)^\perp (=\mathcal{N}(\origA )^\perp)$ fulfill 
  Assume $x^\dagger \in \mathcal{N}(A)^\bot$ and let $\psi$ be an index function such that
\begin{equation} \label{eq:technical_source_condition}
\exists 
\bar{\beta} \in (0,1]: \forall \beta \in (0,\bar{\beta}): \sum_{j:\ \sigma_j^2 \leqslant \beta} \langle x^\dagger, v_j \rangle^2 = \mathcal{O}( \psi(\beta)^{2})
\end{equation}
holds. Then there exists another index function $\tilde{\psi}$ such that the local approximation property \eqref{eq:conv_trainingproperty_ass} is fulfilled for $x^\dagger$.
\end{lemma}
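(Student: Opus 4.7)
The plan is to compute the residual $\tilde{A}^\ast \tilde{A} x^\dagger - \varphi_{\theta(L),L}(x^\dagger)$ coefficient-wise in the singular basis $\{v_j\}$, extract the factor $(1-L)$ from the only nonvanishing terms, and then bound the remaining tail with two pieces: one controlled by the source-condition hypothesis \eqref{eq:technical_source_condition} on $x^\dagger$, the other controlled by the (fixed) data mean.

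First I would expand using Lemma~\ref{lem:squared_soft_TSVD}: with $w_j = \min\{1-\sigma_j^2, L\}$ and $b_j = \max\{0, 1-L-\sigma_j^2\}\mu_j$, the $j$-th coefficient of $\tilde{A}^\ast \tilde{A} x^\dagger - \varphi_\theta(x^\dagger)$ equals $(\sigma_j^2 - 1 + w_j)\langle x^\dagger, v_j\rangle + b_j$. A case split on whether $\sigma_j^2 \geqslant 1-L$ or $\sigma_j^2 < 1-L$ shows that both summands vanish in the first case, and in the second case collapse to $-(1-L-\sigma_j^2)(\langle x^\dagger, v_j\rangle - \mu_j)$. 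Hence
\begin{equation}
\|\tilde{A}^\ast \tilde{A} x^\dagger - \varphi_{\theta(L),L}(x^\dagger)\|^2 = \sum_{j:\sigma_j^2 < 1-L} (1-L-\sigma_j^2)^2 (\langle x^\dagger, v_j\rangle - \mu_j)^2 \leqslant (1-L)^2 \cdot S(L),
\end{equation}
where $S(L) := \sum_{j:\sigma_j^2 < 1-L} (\langle x^\dagger, v_j\rangle - \mu_j)^2$. By $(a-b)^2 \leqslant 2a^2 + 2b^2$, it suffices to bound $S(L) \leqslant 2 S_{x^\dagger}(1-L) + 2 S_\mu(1-L)$ where $S_z(\beta) = \sum_{j:\sigma_j^2 < \beta} \langle z, v_j\rangle^2$.

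Next I would apply the source condition \eqref{eq:technical_source_condition} directly to conclude $S_{x^\dagger}(1-L) = \mathcal{O}(\psi(1-L)^2)$ as $L \to 1$. For the mean term, I note that $\mu = \tfrac{1}{N}\sum_i x^{(i)} \in X$ is a fixed element independent of $L$, so $\sum_j \mu_j^2 \leqslant \|\mu\|^2 < \infty$, and therefore $S_\mu(\beta) \to 0$ as $\beta \to 0^+$. The function $\beta \mapsto S_\mu(\beta)$ is nondecreasing and tends to $0$ at $0^+$; by a standard construction one can dominate it by an index function $\psi_\mu$, i.e., choose a continuous, strictly increasing $\psi_\mu$ with $\psi_\mu(0)=0$ and $S_\mu(\beta) \leqslant \psi_\mu(\beta)^2$ on $(0,\bar\beta]$ (for instance, majorize $\sqrt{S_\mu(\beta)} + \beta$ by a continuous strictly increasing upper envelope).

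Combining the two bounds gives $\|\tilde{A}^\ast \tilde{A} x^\dagger - \varphi_{\theta(L),L}(x^\dagger)\| \leqslant (1-L)\sqrt{2 C\psi(1-L)^2 + 2\psi_\mu(1-L)^2}$ for $L$ close to $1$, and setting $\tilde\psi(\beta) := \sqrt{2C}\,\psi(\beta) + \sqrt{2}\,\psi_\mu(\beta)$ (which is itself an index function, as a sum of index functions) yields the claim \eqref{eq:conv_trainingproperty_ass} with the new index function $\tilde\psi$. The main obstacle is the subtle point in the second paragraph, namely that the source condition is only imposed on $x^\dagger$ and not on $\mu$; the resolution is that square summability of $\mu$ alone suffices to produce, by a soft argument, an index-function majorant for $S_\mu$ — no rate on $\mu$ is needed, only that the tail vanishes, which in turn only contributes to the constant and to the shape of $\tilde\psi$, not to the order $(1-L)$ itself.
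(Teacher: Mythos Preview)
Your proof is correct and follows essentially the same route as the paper: extract the factor $(1-L)$ from the residual coefficients supported on $\{\sigma_j^2 < 1-L\}$, control the $x^\dagger$-tail via the source condition \eqref{eq:technical_source_condition}, control the $\mu$-tail using only that $(\mu_j)_j \in \ell^2$, and merge the two bounds into a single index function $\tilde\psi$. The only cosmetic difference is that you first compute the exact coefficient $-(1-L-\sigma_j^2)(\langle x^\dagger,v_j\rangle-\mu_j)$ and then split via $(a-b)^2\leqslant 2a^2+2b^2$, whereas the paper applies the triangle inequality $\|\varphi_\theta(x^\dagger)-Ax^\dagger\|\leqslant\|(Id-W-A)x^\dagger\|+\|b\|$ up front and bounds the two terms separately.
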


\begin{proof}
With the setting of Lemma \ref{lem:squared_soft_TSVD} and the trained parameters $(W,b)$ of the network $f_\theta$, it holds
\begin{equation}
    \|\varphi_{\theta(L)}(x^\dagger) - A x^\dagger\| = \|(Id - W)x^\dagger - b - A x^\dagger\| \leqslant \|(Id - W - A) x^\dagger\| + \|b\|.
\end{equation}
Now, we estimate the convergence order of both terms w.r.t $(1-L) \to 0$.

For the first part, we exploit the simple computations 
\begin{align}
    \|(Id - W - A) x^\dagger\|^2 &= \left\| \sum_j (1 - w_j - \sigma_j^2) \langle x^\dagger, v_j \rangle v_j \right\|^2 \notag\\
    &= \sum_j (1 - \sigma_j^2 - \min \{1-\sigma_j^2, L\})^2 \langle x^\dagger, v_j \rangle^2\notag\\
    &= \sum_j \max\{1 - L - \sigma_j^2, 0\}^2 \langle x^\dagger, v_j \rangle^2\notag\\
    &= \sum_{\sigma_j^2 \leqslant  1-L} (1 - L - \sigma_j^2)^2 \langle x^\dagger, v_j \rangle^2\notag\\
    &\leqslant  \sum_{\sigma_j^2 \leqslant  1-L} (1 - L)^2 \langle x^\dagger, v_j \rangle^2\notag\\
    &= (1 - L)^2 \sum_{\sigma_j^2 \leqslant  1-L} \langle x^\dagger, v_j \rangle^2.
\end{align}
Using assumption \eqref{eq:technical_source_condition}, we obtain
\begin{equation}
    (1-L)^2 \sum_{\sigma_j^2 \leqslant  1-L} \langle x^\dagger, v_j \rangle^2 = \mathcal{O} \left( (1-L)^2 \psi(1-L)^2 \right).
\end{equation}

Regarding the second part, it holds
\begin{align}
        \|b\|^2 &= \sum_j b_j^2
        = \sum_j \max\{ 0, 1-L - \sigma^2 \}^2 \mu_j^2 \notag\\
        &= \sum_{\sigma_j^2 \leqslant 1-L} (1-L-\sigma_j^2)^2 \mu_j^2
        \leqslant (1-L)^2 \sum_{\sigma_j^2 \leqslant 1-L} \mu_j^2.
\end{align}
Since the values $\mu_j$ form by definition (see Lemma \ref{lem:squared_soft_TSVD}) an $\ell^2$ sequence, there exists an index function $\tilde{\psi} \geqslant  \psi$ such that
\begin{equation}
    \sum_{\sigma_j^2 \leqslant 1-L} \mu_j^2 = \mathcal{O} \left(\tilde{\psi}(1-L)^2\right).
\end{equation}

Overall, we obtain
\begin{equation}
    \|(Id - W - A) x^\dagger\| + \|b\| = \mathcal{O} \left( (1-L)^2 \tilde{\psi}(1-L)^2 \right),
\end{equation}
thus, the local approximation property \eqref{eq:conv_trainingproperty_ass} is fulfilled.
\end{proof}

%\begin{equation}
%    \begin{split}
%        \|\varphi_{\theta(L)}(x^\dagger) - A x^\dagger\|^2 &= \left\| \sum_j (1 - w_j - \sigma_j^2) \langle x^\dagger, v_j \rangle v_j \right\|^2 \\
%        &= \sum_j (1 - \sigma_j^2 - \min \{1-\sigma_j^2, L\})^2 \langle x^\dagger, v_j \rangle^2\\
%        &= \sum_j \max\{1 - L - \sigma_j^2, 0\}^2 \langle x^\dagger, v_j \rangle^2\\
%        &= \sum_{\sigma_j^2 \leqslant  1-L} (1 - L - \sigma_j^2)^2 \langle x^\dagger, v_j \rangle^2\\
%        &\leqslant  \sum_{\sigma_j^2 \leqslant  1-L} (1 - L)^2 \langle x^\dagger, v_j \rangle^2\\
%        &= (1 - L)^2 \sum_{\sigma_j^2 \leqslant  1-L} \langle x^\dagger, v_j \rangle^2.\\
%    \end{split}
%\end{equation}
%To get the desired order of $O\left((1-L)^{2(1+\varepsilon)} \right)$, we therefore need
%\begin{equation}
%    \sum_{\sigma_j^2 \leqslant  1-L} \langle x^\dagger, v_j \rangle^2 \leqslant  C \cdot (1-L)^{2\varepsilon}
%\end{equation}
%The source condition on $x^\dagger$ can be formulated as
%\begin{equation}
 %   \sum_{\sigma_j^2 \leqslant  \beta} \langle x^\dagger, v_j \rangle^2 \leqslant  C \cdot \beta^{2\varepsilon}, \quad \forall \beta > 0
%\end{equation}
%\begin{remark}

While \eqref{eq:technical_source_condition} is quite a technical source condition, standard source conditions of the form
\begin{equation}
    \exists w \in X, \mu > 0 \colon \quad x^\dagger =  A^\mu w
\end{equation}
imply it.
In this case, the index function $\psi$ is of the form $\psi(\beta) = \beta^\mu$. The proof of the following corollary reveals the exact relation between standard source conditions and \eqref{eq:technical_source_condition}.

%Well-studied source conditions defined via an index function $\psi:[0,\|A\|] \to [0,\infty)$ (continuous, strictly increasing, and $\psi(0)=0$), which is defined on the spectrum of the self-adjoint operator $A=\origA ^\ast \origA $  can fulfill the desired property. Typical examples are $\psi(\lambda)=\lambda^\varepsilon$, $\epsilon>0$. In general, we assume that there $\exists w \in X \colon x^\dagger = \psi(A) w$.  
%We conclude this section with the following corollary yielding regularization properties in terms of Theorem \ref{thm:convergence} for any $x^\dagger \in \mathcal{N}(A)^\perp (=\mathcal{N}(\origA )^\perp)$ if $\origA $ is a compact operator.

\begin{corollary}\label{corr:soft_TSVD_source_cond}
 We assume that the setting of Lemma \ref{lem:squared_soft_TSVD} holds. Let $\origA $ be compact. Then, for any $x^\dagger \in \mathcal{N}(\origA )^\bot$ the local approximation property \eqref{eq:conv_trainingproperty_ass} is fulfilled (i.e., $S=\mathcal{N}(\origA )^\bot$).
\end{corollary}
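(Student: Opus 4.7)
The plan is to reduce the corollary to Lemma~\ref{lem:squared_soft_local_approx}. Fix $x^\dagger \in \mathcal{N}(\tilde{A})^\perp$ and set $a_j := \langle x^\dagger, v_j\rangle^2$ and $g(\beta) := \sum_{j:\sigma_j^2 \leqslant \beta} a_j$. By that lemma it suffices to exhibit an index function $\psi$ and some $\bar\beta \in (0,1]$ such that $g(\beta) = \mathcal{O}(\psi(\beta)^2)$ for $\beta \in (0,\bar\beta)$. The reverse inclusion $S \subseteq \mathcal{N}(\tilde{A})^\perp$ is already implicit in the discussion following Theorem~\ref{thm:convergence}: the diagonal structure gives $\mathcal{R}(f_\theta) \subseteq \mathcal{N}(\tilde{A})^\perp$, and an orthogonal decomposition yields $\|\tilde{A}^\ast \tilde{A} x^\dagger - \varphi_\theta(x^\dagger)\| \geqslant \|P_{\mathcal{N}(\tilde{A})} x^\dagger\|$, so the local approximation property forces $P_{\mathcal{N}(\tilde{A})} x^\dagger = 0$.

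I would first use compactness to control the decay of $g$. Since $\tilde{A}$ is compact, $\sigma_j \to 0$, so $\{j : \sigma_j^2 > \beta\}$ is finite for every $\beta > 0$ and exhausts $\mathbb{N}$ as $\beta \to 0^+$. Combined with $\sum_j a_j = \|x^\dagger\|^2 < \infty$ (which uses $x^\dagger \in \mathcal{N}(\tilde{A})^\perp$ and $\{v_j\}$ being an ONB of that subspace), this gives $g(\beta) \to 0$ as $\beta \to 0^+$. Note that $g$ is non-decreasing with $g(0) = 0$, but in general has a jump at every eigenvalue $\sigma_j^2$.

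The main technical step is turning the non-decreasing, right-continuous function $g$ with $g(0^+)=0$ into a genuine index function dominating $\sqrt{g}$. The naive candidate $\psi(\beta) = \sqrt{g(\beta)} + \beta$ fails continuity because of the jumps of $g$. I would instead use a smoothing by local averaging,
\[
\psi(\beta) := \beta + \frac{1}{\beta}\int_\beta^{2\beta}\sqrt{g(s)}\,ds, \qquad \psi(0) := 0,
\]
and verify: (i) continuity on $(0,\infty)$ is standard and $\psi(\beta) \leqslant \beta + \sqrt{g(2\beta)} \to 0$ gives continuity at $0$; (ii) monotonicity of $\sqrt{g}$ gives $\psi(\beta) \geqslant \sqrt{g(\beta)}$, hence $g(\beta) \leqslant \psi(\beta)^2$; (iii) an elementary derivative estimate yields $\psi'(\beta) \geqslant 1$ almost everywhere on $(0,\infty)$ (the additive $\beta$ dominates the possibly negative contribution of the averaged term, after cancelling $F(\beta)/\beta^2$ against $F'(\beta)/\beta$ using the monotonicity of $\sqrt{g}$), so $\psi$ is strictly increasing. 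With this $\psi$ the hypothesis of Lemma~\ref{lem:squared_soft_local_approx} is met for any $\bar\beta \leqslant 1$, and the local approximation property for $x^\dagger$ follows. The only real obstacle is this construction, since compactness alone only delivers a non-decreasing tail function that vanishes at zero, and upgrading it to a continuous, strictly increasing envelope requires some smoothing.
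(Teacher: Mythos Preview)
Your proof is correct and reaches the same conclusion via Lemma~\ref{lem:squared_soft_local_approx}, but the route differs from the paper's. The paper invokes a general result from the source-condition literature (\cite[Thm.~1]{mathe2008general}): for self-adjoint non-negative compact $A$, every $x^\dagger$ admits a representation $x^\dagger=\psi(A)w$ with some index function $\psi$ and $\|w\|\leqslant(1+\varepsilon)\|x^\dagger\|$. From this one reads off
\[
\sum_{\sigma_j^2\leqslant\beta}\langle x^\dagger,v_j\rangle^2=\sum_{\sigma_j^2\leqslant\beta}\psi(\sigma_j^2)^2\langle w,v_j\rangle^2\leqslant\psi(\beta)^2\|w\|^2,
\]
which is exactly the technical condition \eqref{eq:technical_source_condition}. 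Your argument instead builds the index function directly from the tail $g(\beta)=\sum_{\sigma_j^2\leqslant\beta}\langle x^\dagger,v_j\rangle^2$: compactness gives $g(0^+)=0$, and the local-average smoothing $\psi(\beta)=\beta+\beta^{-1}\int_\beta^{2\beta}\sqrt{g}$ upgrades the piecewise-constant tail to a continuous strictly increasing majorant (your derivative bound $\psi'\geqslant 1$ a.e.\ does hold, using $\int_\beta^{2\beta}\sqrt{g}\leqslant\beta\sqrt{g(2\beta)}$ and monotonicity of $g$; absolute continuity of $\psi$ on compacta in $(0,\infty)$ then yields strict monotonicity). Your approach is self-contained and elementary, essentially re-proving the relevant special case of Math\'e's theorem by hand; the paper's approach is shorter and highlights the conceptual link to classical source conditions. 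You also make the inclusion $S\subseteq\mathcal{N}(\tilde{A})^\perp$ explicit, which the paper leaves to the discussion preceding the corollary.
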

\begin{proof}
Let $x^\dagger \in \mathcal{N}(\origA )^\bot$ be arbitrary. By construction, $A=\origA ^\ast \origA $ is self-adjoint and non-negative. As $A$ is also compact, we can deduce from \cite[Thm. 1]{mathe2008general}: For any $\varepsilon>0$ there exists an index function $\psi:[0,1]\to [0,\infty)$ such that $x^\dagger \in \{ x \in X \,| \, x=\psi(A)w, \|w\| \leqslant (1+\varepsilon) \|x^\dagger\| \}$.  

This implies that there exists a $w \in X$ such that $x^\dagger=\psi(A)w$. 
We thus obtain for $\beta >0$
\begin{align}
\sum_{\sigma_j^2 \leqslant  \beta} \langle x^\dagger, v_j \rangle^2 &= \sum_{\sigma_j^2 \leqslant  \beta} \langle \psi(A)w, v_j \rangle^2 = \sum_{\sigma_j^2 \leqslant  \beta} \psi(\sigma_j^2)^2 \langle w, v_j \rangle^2 \notag\\
&\leqslant  \sum_{\sigma_j^2 \leqslant  \beta} \psi(\beta)^2 \langle w, v_j \rangle^2 
\leqslant 
    \|w\|^2  \psi(\beta)^2 \leqslant 
    (1+ \varepsilon)^2 \|x^\dagger\|^2 \psi(\beta)^2.
\end{align}
By Lemma~\ref{lem:squared_soft_local_approx}, we get the desired local approximation property \eqref{eq:conv_trainingproperty_ass}. 
\end{proof}

The verification of regularization properties by Theorem~\ref{thm:convergence} in terms of the local approximation property is thus in line with the existing theory exploited in Lemma~\ref{lem:filter_regularization_with_bias}, and it further illustrates the character of the local approximation property combining the approximation capabilities of the residual network and a source condition on the desired solution.

%\end{remark}

\subsection{Linear network with ReLU activation}
In the following, we include selected nonlinearities as activation functions in a shallow network, allowing for an analytical investigation of the resulting reconstruction scheme. 
Here, we start with a ReLU activation, which requires a certain assumption on the training dataset depending on the chosen nonlinearity. For simplicity, we do not include a bias in the architecture, in contrast to the architecture from the last section.

\begin{lemma}
\label{lem:linear_with_relu}
    Let $(v_j, \sigma_j^2)_j$ be the eigenvectors and eigenvalues of $A$ and $\varphi_\theta = Id - f_\theta$ be an iResNet which solves \eqref{eq:training_minproblem} with
    \begin{itemize}
        \item[(i)] $f_\theta(x) = \phi(Wx)$, where $\theta = W$ and $\Theta = \{ W \in L(X) \, | \, \exists: (w_j)_{j\in \N}: W = \sum_{j \in \mathbb{N}} w_j \langle \cdot, v_j \rangle v_j \}$ and $\phi$ being the ReLU function w.r.t.\ the eigenvectors, i.e., $\phi(x) = \sum_{j \in \N} \max(0, \langle v_j, x \rangle ) v_j$,
        \item[(ii)] for every eigenvector $v_j$ of $A$, the training dataset $\{x^{(i)}\}_{i\in \{1,\hdots,N\}}$ contains at least one $x^{(i)}$ s.t.\ $\langle x^{(i)}, v_j \rangle > 0$.
    \end{itemize}
    Then, the solution of \eqref{eq:training_minproblem} is $W = \sum_{j \in \mathbb{N}} w_j \langle \cdot, v_j \rangle v_j$, $w_j = \min \{1- \sigma_j^2, L\}$ and \eqref{eq:data_dep_filterfunc} holds with 
    \begin{equation}
        r_L(\sigma^2, s) = 
        \begin{cases}
            \frac{1}{\max\{\sigma^2, 1-L\}} & \text{if } s \geqslant 0,\\
        1 & \text{if } s < 0.
        \end{cases}
    \end{equation}
%    and 
%
%        \begin{equation}
%        F_L(\sigma, s) = 
%        \begin{cases}
%           1 & \text{if } \sigma \geqslant \sqrt{1-L}, s \geqslant 0\\
%        \frac{\sigma^2}{1-L} & \text{if } \sigma < \sqrt{1-L}, s \geqslant 0 \\
%        \sigma^2 & \text{if } s < 0,
%        \end{cases}
%        \label{eq:filter_ReLU}
%    \end{equation}
%    respectively.
\end{lemma}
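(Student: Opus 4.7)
The plan is to decouple the training problem into independent scalar problems along each eigendirection $v_j$ of $A$, solve each explicitly using assumption~(ii), and then invert $\varphi_\theta$ componentwise to read off the filter function.

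First, I would exploit that $W$ is diagonal in the basis $\{v_j\}$, $\phi$ is componentwise ReLU in the same basis, and $A v_j = \sigma_j^2 v_j$. Writing $x_j^{(i)} = \langle x^{(i)}, v_j\rangle$, one obtains
\[
\|\varphi_\theta(x^{(i)}) - A x^{(i)}\|^2 = \sum_{j} \bigl((1-\sigma_j^2) x_j^{(i)} - \max(0, w_j x_j^{(i)})\bigr)^2,
\]
so the objective decouples as a sum $\sum_j l_j(w_j)$ over coordinates. Because $\max(0,\cdot)$ is $1$-Lipschitz, a direct estimate gives $\mathrm{Lip}(f_\theta) = \sup_j |w_j|$ (the lower bound obtained by testing with $x = \pm v_j$ and $y = 0$). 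Hence the constraint $\mathrm{Lip}(f_\theta)\leqslant L$ is equivalent to $|w_j|\leqslant L$ for every $j$, and the training problem reduces to the independent one-dimensional problems $\min_{|w|\leqslant L} l_j(w)$.

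Next, for each $j$ I would split on the sign of $w$. Set $p_j = \sum_{i:\, x_j^{(i)} > 0} (x_j^{(i)})^2$ and $n_j = \sum_{i:\, x_j^{(i)} < 0} (x_j^{(i)})^2$. For $w \geqslant 0$, $\max(0, w x_j^{(i)}) = w \max(0, x_j^{(i)})$, so
\[
l_j(w) = \tfrac{1}{N}\bigl[(1 - \sigma_j^2 - w)^2 p_j + (1-\sigma_j^2)^2 n_j\bigr],
\]
a strictly convex quadratic (assumption~(ii) forces $p_j > 0$) with unconstrained minimizer $1 - \sigma_j^2 \in [0,1)$; its constrained minimum on $[0, L]$ is thus $w_j^\ast = \min\{1-\sigma_j^2, L\}$. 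For $w \leqslant 0$, the roles of $p_j$ and $n_j$ swap and the loss is non-increasing in $w$ on $[-L,0]$ (the unconstrained minimizer being still $1-\sigma_j^2 \geqslant 0$), so on $[-L,0]$ it is minimized at $w = 0$, where it equals $l_j(0) \geqslant l_j(w_j^\ast)$. Comparing the two regimes gives the claimed $w_j = \min\{1-\sigma_j^2, L\}$.

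Finally, since the optimal $w_j$ is nonnegative, $\varphi_\theta$ acts componentwise: for $x_j \geqslant 0$ one has $\varphi_\theta(x)_j = (1-w_j) x_j = \max\{\sigma_j^2, 1-L\}\, x_j$, while for $x_j < 0$ one has $\varphi_\theta(x)_j = x_j$. Since $\max\{\sigma_j^2, 1-L\} > 0$, each coordinate map is a sign-preserving, strictly increasing bijection of $\R$, so inversion is immediate: $x_j = z_j$ when $z_j < 0$ and $x_j = z_j/\max\{\sigma_j^2, 1-L\}$ when $z_j \geqslant 0$, which is exactly the stated filter $r_L$. The main technical nuisance is the piecewise-linear ReLU: the loss is not globally differentiable in $w_j$, so one cannot naively set a single gradient to zero. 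The case split on the sign of $w$, together with assumption~(ii) to secure strict convexity on $[0,L]$ and to rule out the $w < 0$ branch, is what pins down the closed-form minimizer.
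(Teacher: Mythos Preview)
Your proof is correct and follows essentially the same route as the paper's: decouple along the eigendirections, reduce the Lipschitz constraint to $|w_j|\leqslant L$, argue that negative coefficients do not affect the minimizer while the positive ones (guaranteed by assumption~(ii)) force $w_j=\min\{1-\sigma_j^2,L\}$, and then invert the piecewise-linear map componentwise. Your treatment is in fact slightly more careful than the paper's---you establish the Lipschitz \emph{equivalence} $\mathrm{Lip}(f_\theta)=\sup_j|w_j|$ rather than just sufficiency, and your explicit case split on $\mathrm{sgn}(w)$ with the quantities $p_j,n_j$ makes the comparison $l_j(0)\geqslant l_j(w_j^\ast)$ transparent---but the underlying argument is the same.
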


The proof of the lemma can be found in Appendix~\ref{sec:proof_of_linear_with_relu}.
The obtained filter function is now characterized by a varying behavior depending on the actual measurement $y$. This is expressed via the variable $s$ which represents the coefficients $\langle z, v_i \rangle = \sigma_i \langle y, u_i \rangle$ (see \eqref{eq:data_dep_filterfunc}, \eqref{eq:data_dep_filterfunc_y}).
Whenever the coefficient is positive, the reconstruction scheme behaves like the squared soft TSVD without bias discussed in Section~\ref{sec:soft_TSVD}, i.e., they share the same filter function for those $x^\dagger \in \mathcal{N}(A)$. 
In all other cases, the reconstruction method does not change the coefficients of the data, i.e., it behaves like the identity. 
Due to the relationship to the squared soft TSVD, we can immediately specify those $x^\dagger$ fulfilling the local approximation property, i.e., for the ReLU network, we have
\begin{equation}
S=\{ x\in \mathcal{N}(\origA )^\bot \, | \, \forall j\in \N: \langle v_j,x\rangle \geqslant 0 \}.
\end{equation}
Thus, the nonlinearity in the network architecture introduces restricted approximation capabilities as well as convergence guarantees on a limited subset of $X$ only.

% -------------------------------
\subsection{Linear network with soft thresholding activation}
% -------------------------------

At last, we want to analyze a network with soft thresholding activation, e.g., known from LISTA \cite{gregor2010}. This function is known to promote sparsity since it shrinks all coefficients and sets those under a certain threshold to zero. That is why only training data
with sufficiently large coefficients matters for the result of the training, and the condition $|\langle x, v_j\rangle | > \frac{\alpha_j}{L}$ is crucial in the following lemma.

\begin{lemma}
\label{lem:arch_soft_thresh}
    Let $(v_j, \sigma_j^2)_j$ be the eigenvectors and eigenvalues of $A$, $\alpha=(\alpha_j)_j, \alpha_j  \geqslant  0$ and $\varphi_\theta = Id - f_\theta$ be an iResNet which solves \eqref{eq:training_minproblem} with
    \begin{itemize}
        \item[(i)] $f_\theta(x) = \phi_\alpha(Wx)$, where $\theta = W$, $\Theta = \{ W \in L(X) \, | \, \exists (w_j)_{j\in \N}:  W = \sum_{j \in \mathbb{N}} w_j \langle \cdot, v_j \rangle v_j \}$ and $\phi_\alpha$ is the soft thresholding function w.r.t.\ the eigenvectors, i.e.,\ $\phi_\alpha(x) = \sum_{j \in \N} \mathrm{sign}(\langle x, v_j \rangle) \max(0, |\langle x, v_j \rangle | -\alpha_j ) v_j$,
        \item[(ii)] the training data set is $\{x^{(i)}\}_{i \in \{1,\hdots,N\}}$, where for any $j\in \N: \exists i: \ |\langle x^{(i)}, v_j \rangle| > \frac{\alpha_j}{L}$. %$p_i \in \mathbb{R}$.
        %\item (or) for every eigenvector $v_j$ of $A$, the training dataset $(x^{(i)})$ contains at least one $x^{(i)}$ s.t. $\langle x^{(i)}, v_j \rangle \neq 0$.
    \end{itemize}
    Then, a solution of \eqref{eq:training_minproblem} is $W = \sum_{j \in \mathbb{N}} w_j \langle \cdot , v_j \rangle v_j$, $w_j = \min\{\frac{\alpha_j}{p_{L,j}} + 1 - \sigma_j^2, L \}$, $p_{L,j} = \frac{\sum_{i\in I_j(L)} |\langle x^{(i)},v_j \rangle|^2 }{\sum_{i\in I_j(L)} |\langle x^{(i)},v_j \rangle| }$, $I_j(L)=\{ i \in \{1, ..., N\} | \ |\langle x^{(i)} , v_j \rangle | > \frac{\alpha_j}{L} \}$  and \eqref{eq:data_dep_filterfunc} holds with
    \begin{equation} \label{eq:filter_soft_thresh}
    r_L(\sigma_j^2, s) = 
    \begin{cases}
        %\sigma_j & \text{if } |s| \leqslant  \frac{\alpha}{\sigma_j w_j},\\
       %\frac{1}{1-w_j} \frac{\sigma_j |s| - \alpha}{|s|} & \text{if } |s| > \frac{\alpha}{\sigma_j w_j}.
              \frac{1}{ \max\{\sigma_j^2 - \frac{\alpha_j}{p_{L,j}}, 1-L \}} \frac{|s| - \alpha_j}{|s|} & \text{if } |s| > \frac{\alpha_j}{ w_j}, \\
       1 & \text{if } |s| \leqslant  \frac{\alpha_j}{w_j}.
    \end{cases}
    \end{equation}
%and 
%        \begin{equation}
%        F_L(\sigma_j, s) = 
%        \begin{cases}
%           \frac{\sigma_j^2}{\sigma_j^2 - \frac{\alpha}{p_{L,j}}} \frac{\sigma_j |s| - \alpha_j}{\sigma_j |s|} & \text{if } \sigma_j \geqslant \sqrt{1-L + \frac{\alpha_j}{p_{L,j}}}, |s| > \frac{\alpha_j}{(\frac{\alpha_j}{p_{L,j}} + 1 - \sigma_j^2 )\sigma_j}\\
%           \sigma_j^2 & \text{if } \sigma_j \geqslant \sqrt{1-L + \frac{\alpha_j}{p_{L,j}}}, |s| \leqslant \frac{\alpha_j}{(\frac{\alpha_j}{p_{L,j}} + 1 - \sigma_j^2 )\sigma_j}\\
%           \frac{\sigma_j^2}{1-L} \frac{\sigma_j |s| - \alpha_j}{\sigma_j |s|} & \text{if } \sigma_j < \sqrt{1-L + \frac{\alpha_j}{p_{L,j}}}, |s| >\frac{\alpha_j}{L \sigma_j}\\
%                      \sigma_j^2 & \text{if } \sigma_j < \sqrt{1-L + \frac{\alpha_j}{p_{L,j}}}, |s| \leqslant \frac{\alpha_j}{L \sigma_j},
%        \end{cases}
%        \label{eq:filter_ReLU}
%    \end{equation}
%    respectively. 
For singular values $\sigma_j^2 = 1$, $w_j$ is not uniquely determined.
\end{lemma}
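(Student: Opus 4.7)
The plan is to decouple the training problem into independent scalar optimizations, one per eigendirection, and then perform a piecewise analysis on each. Because $W$, $\tilde{A}^*\tilde{A}$, and $\phi_\alpha$ are all diagonal in the orthonormal basis $\{v_j\}_{j\in\mathbb{N}}$, Parseval gives
\begin{equation*}
 l(\varphi_\theta, A) = \frac{1}{N}\sum_{j\in\mathbb{N}}\sum_{i=1}^N\big(f_{\theta,j}(\xi_i^{(j)}) - (1-\sigma_j^2)\xi_i^{(j)}\big)^2, \qquad \xi_i^{(j)} := \langle x^{(i)}, v_j\rangle,
\end{equation*}
where $f_{\theta,j}(\xi) = \mathrm{sign}(w_j\xi)\max\{0,|w_j\xi|-\alpha_j\}$. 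Since the soft threshold is $1$-Lipschitz and $W$ has spectral norm $\sup_j|w_j|$, the constraint $\mathrm{Lip}(f_\theta)\leqslant L$ reduces to $|w_j|\leqslant L$ for every $j$. A short sign-flip comparison using $1-\sigma_j^2\geqslant 0$ shows that flipping $w_j\mapsto -w_j$ can only increase the loss, so one may restrict to $w_j\in[0,L]$.

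For each $j$ the resulting scalar optimization in $w:=w_j\in[0,L]$ is handled by tracking the active set $I_j(w) := \{i : |\xi_i^{(j)}| > \alpha_j/w\}$. The breakpoints $w = \alpha_j/|\xi_i^{(j)}|$ partition $[0,L]$ into finitely many subintervals; on the subinterval where $I_j(w)=I$ the loss equals the convex quadratic
\begin{equation*}
 g_I(w) = \sum_{i\in I}\big((w - 1 + \sigma_j^2)\xi_i^{(j)} - \mathrm{sign}(\xi_i^{(j)})\,\alpha_j\big)^2 + \sum_{i\notin I}(1-\sigma_j^2)^2 (\xi_i^{(j)})^2,
\end{equation*}
whose unconstrained minimizer is $w_I^* = 1 - \sigma_j^2 + \alpha_j\sum_{i\in I}|\xi_i^{(j)}|/\sum_{i\in I}(\xi_i^{(j)})^2$. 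For $I = I_j(L)$ this yields $w^*_{I_j(L)} = 1 - \sigma_j^2 + \alpha_j/p_{L,j}$, with $p_{L,j}$ well defined by assumption~(ii).

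The key step is to show that the global minimizer on $[0,L]$ equals $\min\{w^*_{I_j(L)}, L\}$. If $w^*_{I_j(L)}\geqslant L$, then on the piece with active set $I_j(L)$ (which contains $w=L$) the derivative $g'_{I_j(L)}$ is non-positive throughout, so the restricted infimum on that piece is at $w=L$; one must then rule out pieces with strictly smaller active set $I\subsetneq I_j(L)$ by exploiting continuity at the breakpoints and the quadratic form of each $g_I$. If $w^*_{I_j(L)}<L$, the unconstrained critical point of $g_{I_j(L)}$ lies in the interior and provides the minimizer by the analogous comparison. The main obstacle is precisely this piece comparison, since the overall loss is piecewise convex but not globally convex, and a priori the minimum could sit in a piece whose active set differs from $I_j(L)$. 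The stated non-uniqueness for $\sigma_j^2=1$ reflects that $g_{I_j(L)}$ degenerates to an affine function of $w$ in that case.

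Once the optimal $w_j$ has been pinned down, the filter function follows from a direct inversion of $\varphi_{\theta,j}=Id - f_{\theta,j}$. For $|s|\leqslant \alpha_j/w_j$ the map is the identity, giving $r_L(\sigma_j^2,s)=1$; for $|s|>\alpha_j/w_j$, solving $(1-w_j)x = s - \mathrm{sign}(s)\,\alpha_j$ yields $r_L(\sigma_j^2,s) = (|s|-\alpha_j)/((1-w_j)|s|)$. Substituting $1-w_j = \max\{\sigma_j^2 - \alpha_j/p_{L,j},\,1-L\}$ then produces exactly \eqref{eq:filter_soft_thresh}.
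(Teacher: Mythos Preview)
Your proposal follows essentially the same route as the paper: the diagonal decomposition via Parseval, the reduction of the Lipschitz constraint to $|w_j|\leqslant L$, the sign argument restricting to $w_j\geqslant 0$, the identification of the quadratic in $w_j$ once the active set is fixed, and the componentwise inversion to obtain the filter. The paper disposes of what you call the ``main obstacle'' with the single heuristic remark that ``the first case is an upper bound of the second case, [so] it is highly desirable to choose $w_j$ large enough'' and then simply replaces the loss by $\sum_{i\in I_j(L)}((1-\sigma_j^2-w_j)|x_j^{(i)}|+\alpha_j)^2$ on all of $[0,L]$; you are more explicit that the objective is only piecewise convex and that the pieces must be compared. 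In that sense your sketch is, if anything, more honest about the one step that neither treatment fully nails down, but the overall strategy and all the concrete computations coincide with the paper's.
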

 The proof of the lemma can be found in Appendix~\ref{app:lem:arch_soft_thresh}. It follows the same line of reasoning as in the previous sections but is more technical due to the effects of the nonlinearity.  

So far, the filter function $r_L$ in Lemma \ref{lem:arch_soft_thresh} is only defined on the discrete values $\sigma_j^2$ (and not continuous for $\sigma^2 \in [0,1]$) since it depends on the coefficients $p_{L,j}$, $\alpha_j$ and $w_j$. %, which also depend on $L$ due to the index set $I_j(L)$, and the architecture parameter $\alpha_j$.
However, if we assume continuous extensions $p_L(\sigma^2)$ with $p_L(\sigma_j^2) = p_{L,j}$, $w_L(\sigma^2)$ with $w_L(\sigma_j^2) = w_j$, and $\alpha(\sigma^2)$ with $\alpha(\sigma_j^2)=\alpha_j$, the function $r_L = r_L(\sigma^2, s)$ also becomes continuous. The continuity at the point $|s| = \frac{\alpha(\sigma^2)}{w_L(\sigma^2)}$ is assured by
\begin{equation}
\frac{1}{ \max\{\sigma^2 - \frac{\alpha(\sigma^2)}{p_L(\sigma^2)}, 1-L \}} \frac{\frac{\alpha(\sigma^2)}{w_L(\sigma^2)} - \alpha(\sigma^2)}{\frac{\alpha(\sigma^2)}{w_L(\sigma^2)}}
= \frac{1-w_L(\sigma^2)}{ \max\{\sigma^2 - \frac{\alpha(\sigma^2)}{p_L(\sigma^2)}, 1-L \}} = 1 = r_L(\sigma^2, s).
%\begin{split}
%&\quad \, \, \frac{1}{ \max\{\sigma^2 - \frac{\alpha}{p(\sigma)}, 1-L \}} \frac{\frac{\alpha}{w(\sigma)} - \alpha}{\frac{\alpha}{w(\sigma)}} \\
%&= \frac{1}{ \max\{\sigma^2 - \frac{\alpha}{p(\sigma)}, 1-L \}} \frac{\frac{1}{w(\sigma)} - 1}{\frac{1}{w(\sigma)}} = \frac{1-w(\sigma)}{ \max\{\sigma^2 - \frac{\alpha}{p(\sigma)}, 1-L \}} = 1.
%\end{split}
\end{equation}

To be able to interpret the filter function, suitable values for $s$ representing the coefficients $\langle z, v_i \rangle = \sigma_i \langle y, u_i \rangle$ (see \eqref{eq:data_dep_filterfunc}, \eqref{eq:data_dep_filterfunc_y}) need to be considered. One reasonable option is to choose $s$ according to the data on which $\varphi_\theta$ has been trained. We thus consider an eigenvector $v_j$ scaled with the coefficient $p_{L,j}$.
Since $\varphi_\theta$ minimizes \eqref{eq:training_minproblem}, we expect $\varphi_\theta(p_{L,j} v_j) \approx p_{L, j} A v_j = p_{L,j} \sigma_j^2 v_j$ and $\varphi_\theta^{-1}(p_{L,j} \sigma_j^2 v_j) \approx p_{L, j} v_j $, respectively. Accordingly, we choose $|s|$ proportional to $p_L(\sigma^2) \sigma^2$, i.e.,\ $|s| = \gamma\, p_L(\sigma^2) \sigma^2$ for different values $\gamma > 0$. Hence, the case $\gamma=1$ corresponds to a test vector $z$ with coefficients $|\langle z, v_i \rangle| = |s| = p_L(\sigma_j^2) \sigma_j^2$, which perfectly fits to the training data. Analogously, the cases $\gamma < 1$ (and $\gamma > 1$, respectively) correspond to test data whose coefficients are smaller (or bigger, respectively) than the average coefficients of the training data.

%Due to the nature of the approximation training, where $\varphi_\theta$ is trained on average to map $p_{L,i} v_i$ (approximately) to $p_{L,i} \sigma_i^2 v_i$, we expect $\varphi_\theta^{-1}$ to work well for data $z_i = p_{L,i} \sigma_i^2 v_i$. Obviously, it holds $s = \langle z_i, v_i\rangle = p_{L,i} \sigma_i^2$ in this case. Therefore, for fixed $L$ we choose $s$ proportional to $p_L(\sigma) \sigma^2$ to interpret the mapping $\sigma \mapsto r_L(\sigma, \gamma p_L(\sigma) \sigma^2)$ for different values of $\gamma > 0$.

For $\gamma = 1$, the filter function $r_L$ can be written in a form, which allows for an easier interpretation. It holds
%\begin{equation}
%    r_L(\sigma, p_L(\sigma)\sigma^2) = 
%    \begin{cases}
%    1 & \text{if } \sigma^2 \leqslant  \frac{\alpha(\sigma^2)}{L p_L(\sigma)}, \\
%    \min \left\lbrace 1, \frac{1}{1-L}\left(\sigma^2 - \frac{\alpha(\sigma^2)}{p_L(\sigma)} \right) \right\rbrace \frac{1}{\sigma^2} & \text{if } \sigma^2 > \frac{\alpha(\sigma^2)}{L p_L(\sigma)},
%    \end{cases}
%\end{equation}
\begin{equation} \label{eq:SFS_plt_func}
    r_L\left(\sigma^2, \pm \, p_L(\sigma^2) \sigma^2 \right) = 
    \begin{cases}
        1 & \text{if } \, \sigma^2 \leqslant \frac{\alpha(\sigma^2)}{L p_L(\sigma^2)}, \\
        \frac{1}{ 1-L} \left( 1 -\frac{\alpha(\sigma^2)}{p_L(\sigma^2) \sigma^2} \right) & \text{if } \, \frac{\alpha(\sigma^2)}{L p_L(\sigma^2)} < \sigma^2 \leqslant \frac{\alpha(\sigma^2)}{p_L(\sigma^2)} + 1 - L, \\
        \frac{1}{\sigma^2} & \text{if } \, \sigma^2 > \frac{\alpha(\sigma^2)}{p_L(\sigma^2)} + 1 - L.
    \end{cases}
\end{equation}
The derivation can be found in Appendix~\ref{sec:computation_nonlin_filterfunc}.
Note that the filter function depends especially on the quotient of $\alpha(\sigma^2)$ (soft thresholding parameter) and $p_L(\sigma^2)$ (property of training data). To visualize the influence, we depicted the graph in Figure \ref{fig:filter_function_STS} for two different (constant) values of $\frac{\alpha(\sigma^2)}{p_L(\sigma^2)}$. As can be seen, the graph of $\sigma^2 \mapsto \sigma^2 r(\sigma^2, p_L(\sigma^2) \sigma^2)$ has two kinks. The first one depends mainly on the choice of $\frac{\alpha(\sigma^2)}{p_L(\sigma^2)}$, the second one mainly on the choice of $L$.

For $\gamma \neq 1$, the filter functions cannot be written in a similarly compact and easy form as in \eqref{eq:SFS_plt_func}. Instead, we illustrate them in Figure \ref{fig:filter_function_STSgamma}. In contrast to the case of $\gamma=1$, the graph of $\sigma^2 \mapsto \sigma^2 r(\sigma^2, p_L(\sigma^2) \sigma^2$ is not equal to one for the larger values of $\sigma^2$.

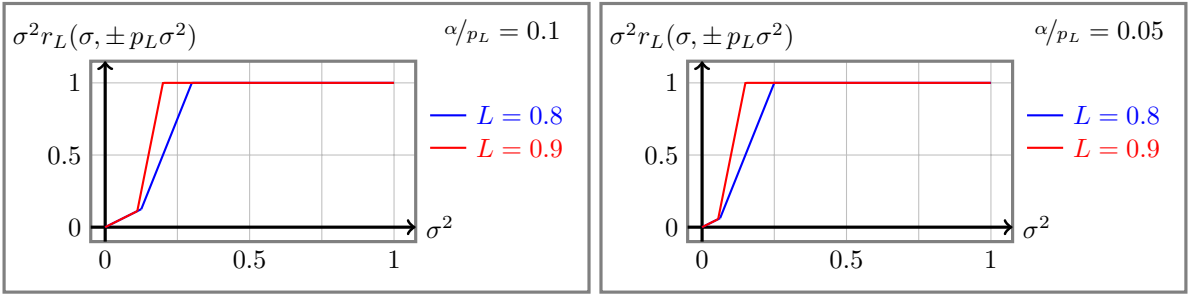
\begin{figure}[ht]
\centering
\begin{tikzpicture}[scale=0.96]
  \draw[step=1.0, lightgray, thin] (-0.2,-0.2) grid (4.3,2.3);
  \draw[gray, very thick] (-0.2,-0.2) rectangle (4.3,2.3);
  \draw[gray, very thick] (-1.4,-0.9) rectangle (6.7,3.1);
  \draw[->, thick, very thick] (-0.2, 0) -- (4.3, 0) node[right] {$\sigma^2$};
  \draw[->, thick, very thick] (0, -0.2) -- (0, 2.3) node[above] {$\sigma^2 r_L(\sigma, \pm \, p_L \sigma^2)$};
  \draw[scale=2, domain=0.25:2, variable=\t, blue, thick] plot[samples=161] ({\t}, {min(1,1/(1-0.8)*(0.5*\t-0.1))});
  \draw[scale=2, domain=0:0.25, variable=\t, blue, thick] plot[samples=21] ({\t}, {0.5*\t});
  \draw[scale=2, domain=0.222:2, variable=\t, red, thick] plot[samples=161] ({\t}, {min(1,1/(1-0.9)*(0.5*\t-0.1))});
  \draw[scale=2, domain=0:0.222, variable=\t, red, thick] plot[samples=21] ({\t}, {0.5*\t});
  
  \draw (-0.2,0) node[left]  {0};
  \draw (-0.2,1) node[left]  {0.5};
  \draw (-0.2,2) node[left]  {1};
  
  \node[below] at (0,-0.2) {$0$};
  \node[below] at (2,-0.2) {$0.5$};
  \node[below] at (4,-0.2) {$1$};
  
  \draw[blue, thick] (4.5,1.55) -- (5,1.55) node[right] {$L=0.8$};
  \draw[red, thick] (4.5,1.1) -- (5,1.1) node[right] {$L=0.9$};
  
  \draw (5.5,2.7) node {$\nicefrac{\alpha}{p_L}=0.1$};
\end{tikzpicture}
\begin{tikzpicture}[scale=0.96]
  \draw[step=1.0, lightgray, thin] (-0.2,-0.2) grid (4.3,2.3);
  \draw[gray, very thick] (-0.2,-0.2) rectangle (4.3,2.3);
  \draw[gray, very thick] (-1.4,-0.9) rectangle (6.7,3.1);
  \draw[->, thick, very thick] (-0.2, 0) -- (4.3, 0) node[right] {$\sigma^2$};
  \draw[->, thick, very thick] (0, -0.2) -- (0, 2.3) node[above] {$\sigma^2 r_L(\sigma, \pm \, p_L\sigma^2)$};
  \draw[scale=2, domain=0.125:2, variable=\t, blue, thick] plot[samples=161] ({\t}, {min(1,1/(1-0.8)*(0.5*\t-0.05))});
  \draw[scale=2, domain=0:0.125, variable=\t, blue, thick] plot[samples=21] ({\t}, {0.5*\t});
  \draw[scale=2, domain=0.111:2, variable=\t, red, thick] plot[samples=161] ({\t}, {min(1,1/(1-0.9)*(0.5*\t-0.05))});
  \draw[scale=2, domain=0:0.111, variable=\t, red, thick] plot[samples=21] ({\t}, {0.5*\t});
  
  \draw (-0.2,0) node[left]  {0};
  \draw (-0.2,1) node[left]  {0.5};
  \draw (-0.2,2) node[left]  {1};
  
  \node[below] at (0,-0.2) {$0$};
  \node[below] at (2,-0.2) {$0.5$};
  \node[below] at (4,-0.2) {$1$};
  
  \draw[blue, thick] (4.5,1.55) -- (5,1.55) node[right] {$L=0.8$};
  \draw[red, thick] (4.5,1.1) -- (5,1.1) node[right] {$L=0.9$};
  
  \draw (5.5,2.7) node {$\nicefrac{\alpha}{p_L}=0.05$};
\end{tikzpicture}

\caption{The plots depict graphs of $\sigma^2 r_L(\sigma^2, \pm \, p_L(\sigma^2) \sigma^2)$ as given in \eqref{eq:filter_soft_thresh} and \eqref{eq:SFS_plt_func}.
The choice $s=p_L(\sigma^2)\sigma^2$ corresponds to the case that the test data is similar to the average training data. In this case, $r_L$ shows a quite similar behavior as the squared soft TSVD function.} 
\label{fig:filter_function_STS}
\end{figure}

\begin{figure}[ht]
\centering
\begin{tikzpicture}[scale=0.96]
  \draw[step=1.0, lightgray, thin] (-0.2,-0.2) grid (4.3,2.5);
  \draw[gray, very thick] (-0.2,-0.2) rectangle (4.3,2.5);
  \draw[gray, very thick] (-1.5,-0.9) rectangle (6.7,3.3);
  \draw[->, thick, very thick] (-0.2, 0) -- (4.3, 0) node[right] {$\sigma^2$};
  \draw[->, thick, very thick] (0, -0.2) -- (0, 2.5) node[above] {$\sigma^2 r_L(\sigma, \pm \, \gamma p_L \sigma^2)$};
  \draw[scale=2, domain=0.29:2, variable=\t, blue, thick] plot[samples=161] ({\t}, {min(1 + (0.85 - 1)*0.1/(0.85*0.5*\t - 0.85*0.1),1/(1-0.8)*(0.5*\t-0.1/0.85))});
  \draw[scale=2, domain=0:0.29, variable=\t, blue, thick] plot[samples=21] ({\t}, {0.5*\t});
  \draw[scale=2, domain=0.26:2, variable=\t, red, thick] plot[samples=161] ({\t}, {min(1 + (0.85 - 1)*0.1/(0.85*0.5*\t - 0.85*0.1),1/(1-0.9)*(0.5*\t-0.1/0.85))});
  \draw[scale=2, domain=0:0.26, variable=\t, red, thick] plot[samples=21] ({\t}, {0.5*\t});
  
  \draw (-0.2,0) node[left]  {0};
  \draw (-0.2,1) node[left]  {0.5};
  \draw (-0.2,2) node[left]  {1};
  
  \node[below] at (0,-0.2) {$0$};
  \node[below] at (2,-0.2) {$0.5$};
  \node[below] at (4,-0.2) {$1$};
  
  \draw[blue, thick] (4.5,1.55) -- (5,1.55) node[right] {$L=0.8$};
  \draw[red, thick] (4.5,1.1) -- (5,1.1) node[right] {$L=0.9$};
  
  \draw (5.2,2.9) node {$\alpha = 0.1, p_L=1$};
  \draw (5.7,2.4) node {$\gamma = 0.85$};
\end{tikzpicture}
\begin{tikzpicture}[scale=0.96]
  \draw[step=1.0, lightgray, thin] (-0.2,-0.2) grid (4.3,2.5);
  \draw[gray, very thick] (-0.2,-0.2) rectangle (4.3,2.5);
  \draw[gray, very thick] (-1.5,-0.9) rectangle (6.7,3.3);
  \draw[->, thick, very thick] (-0.2, 0) -- (4.3, 0) node[right] {$\sigma^2$};
  \draw[->, thick, very thick] (0, -0.2) -- (0, 2.5) node[above] {$\sigma^2 r_L(\sigma, \pm \, \gamma p_L \sigma^2)$};
  \draw[scale=2, domain=0.22:2, variable=\t, blue, thick] plot[samples=161] ({\t}, {min(1 + (1.15 - 1)*0.1/(1.15*0.5*\t - 1.15*0.1),1/(1-0.8)*(0.5*\t-0.1/1.15))});
  \draw[scale=2, domain=0:0.22, variable=\t, blue, thick] plot[samples=21] ({\t}, {0.5*\t});
  \draw[scale=2, domain=0.25:2, variable=\t, red, thick] plot[samples=161] ({\t}, {min(1 + (1.15 - 1)*0.1/(1.15*0.5*\t - 1.15*0.1),1/(1-0.9)*(0.5*\t-0.1/1.15))});
  \draw[scale=2, domain=0.195:0.25, variable=\t, red, thick] plot[samples=161] ({\t}, {1/(1-0.9)*(0.5*\t-0.1/1.15)});
  \draw[scale=2, domain=0:0.195, variable=\t, red, thick] plot[samples=21] ({\t}, {0.5*\t});
  
  \draw (-0.2,0) node[left]  {0};
  \draw (-0.2,1) node[left]  {0.5};
  \draw (-0.2,2) node[left]  {1};
  
  \node[below] at (0,-0.2) {$0$};
  \node[below] at (2,-0.2) {$0.5$};
  \node[below] at (4,-0.2) {$1$};
  
  \draw[blue, thick] (4.5,1.55) -- (5,1.55) node[right] {$L=0.8$};
  \draw[red, thick] (4.5,1.1) -- (5,1.1) node[right] {$L=0.9$};
  
  \draw (5.2,2.9) node {$\alpha = 0.1, p_L=1$};
  \draw (5.7,2.4) node {$\gamma = 1.15$};
\end{tikzpicture}

\caption{The plots depict graphs of $\sigma^2 r(\sigma^2, \pm \, \gamma p_L(\sigma^2)\sigma^2)$ as given in \eqref{eq:filter_soft_thresh}. The case $\gamma \neq 1$ corresponds to test data, which differs from the average training data. Especially for larger singular values $\sigma^2$ (to the right of the two kinks), the filter function shows a suboptimal behavior compared to the squared soft TSVD filter function.}
\label{fig:filter_function_STSgamma}
\end{figure}
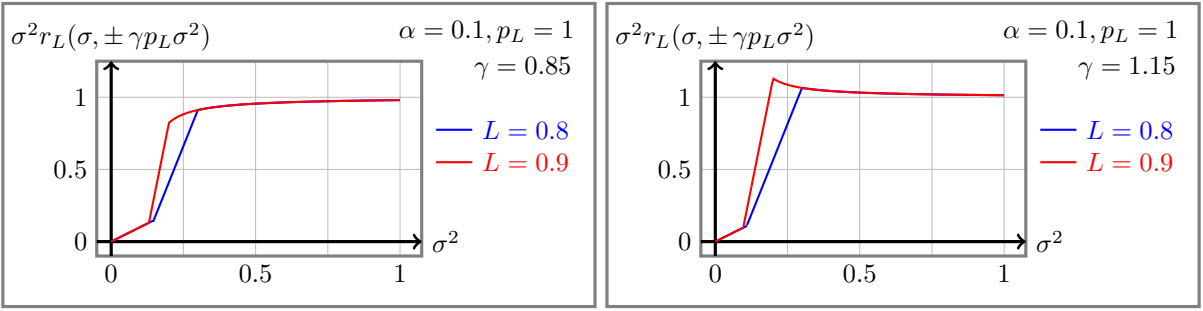

Finally, we want to analyze to which extent $\varphi_\theta$ fulfills the local approximation property \eqref{eq:conv_trainingproperty_ass}. The illustration of the filter function in Figure~\ref{fig:filter_function_STS} implies that convergence can only be possible if both kinks tend to zero. So, we need $L \to 1$ and $\alpha \to 0$.
Thus, the structure of the nonlinearity (soft thresholding) has a severe influence on the regularization properties of the reconstruction scheme.
But in contrast to the ReLU architecture, the soft thresholding operator provides the opportunity to control its similarity to a linear operator, which can be controlled via the coefficients $\alpha$. 
In the following lemma, we discuss in more detail how $\alpha$ can be chosen depending on the regularization parameter $L$ to obtain the desired regularization properties. 

\begin{lemma}
Let all assumptions of Lemma~\ref{lem:arch_soft_thresh} hold. Further, assume that $\origA $ is compact and let $p^\dagger=\sum_{j\in\N} p_j^\dagger v_j$ be given by
$p_j^\dagger = \frac{\sum_{i=1}^N |\langle x^{(i)},v_j \rangle|^2 }{\sum_{i=1}^N |\langle x^{(i)},v_j \rangle| }$.
%$p_j^\dagger = \frac{\sum_{i\in I_j^\dagger} |\langle x^{(i)},v_j \rangle|^2 }{\sum_{i\in I_j^\dagger} |\langle x^{(i)},v_j \rangle| }$, $I_j^\dagger=\{ i \in \{1,\hdots,N\} | \ \langle x^{(i)} , v_j \rangle  \neq 0 \}$.
In addition, consider $x^\dagger \in X$ as well as strictly monotonic and continuous architecture parameter choices $\alpha_j,\beta: (0,1) \rightarrow [0,\infty)$ with 

%\begin{equation}\label{eq:STF_source_ass1}
%  \alpha_j(L)\leqslant p_j^\dagger \beta(L),  \text{ with } \beta(L)=\begin{cases} \mathcal{O}(\psi(1-L)) &  \text{if } |\langle x^\dagger, v_j \rangle| = p_j^\dagger \text{ for all } j\in \N  \\ \mathcal{O}((1-L) \psi(1-L)) &  \text{else,}  \end{cases}  
%\end{equation}
\begin{equation}\label{eq:STF_source_ass1}
  \alpha_j(L)\leqslant p_j^\dagger \beta(L),  \text{ with } \beta(L)= \mathcal{O}((1-L) \psi(1-L))   
\end{equation}
for an index function $\psi:[0,1]\to [0,\infty)$.

Then, the local approximation property \eqref{eq:conv_trainingproperty_ass} holds for any $x^\dagger \in \mathcal{N}(A)^\perp$.

\end{lemma}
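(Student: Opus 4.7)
The plan is to bound $\|\tilde{A}^*\tilde{A}x^\dagger - \varphi_{\theta(L),L}(x^\dagger)\|^2 = \sum_{j} E_j^2$ coordinate-wise, where
\begin{equation}
E_j = (\sigma_j^2 - 1) x_j^\dagger + \mathrm{sign}(x_j^\dagger)\max\{0,\, w_j |x_j^\dagger| - \alpha_j\},\qquad x_j^\dagger := \langle x^\dagger, v_j\rangle,
\end{equation}
and $w_j = \min\{\alpha_j/p_{L,j} + 1 - \sigma_j^2,\, L\}$ as in Lemma~\ref{lem:arch_soft_thresh}. The structure of the proof blends the case analysis of Lemma~\ref{lem:squared_soft_local_approx} with the compactness argument of Corollary~\ref{corr:soft_TSVD_source_cond}.

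Two preparatory observations drive the rest. First, $p_{L,j}$ is the weighted average of $|\langle x^{(i)}, v_j\rangle|$ with weights $|\langle x^{(i)}, v_j\rangle|$ restricted to $I_j(L)$; dropping indices with $|\langle x^{(i)}, v_j\rangle| \leqslant \alpha_j/L$ (values below the average once $\alpha_j(L)/L \leqslant p_j^\dagger$, which holds for $L$ close to $1$) can only raise the average, so $p_{L,j} \geqslant p_j^\dagger$. Combined with the hypothesis $\alpha_j(L) \leqslant p_j^\dagger \beta(L)$, this yields the crucial uniform bound $\alpha_j/p_{L,j} \leqslant \beta(L)$. Second, since $p_j^\dagger \leqslant \max_i |\langle x^{(i)}, v_j\rangle|$ and $\sum_j \max_i |\langle x^{(i)}, v_j\rangle|^2 \leqslant \sum_i \|x^{(i)}\|^2 < \infty$, the sequence $(p_j^\dagger)_j$ is in $\ell^2$, so $\sum_j \alpha_j(L)^2 \leqslant \beta(L)^2 \sum_j (p_j^\dagger)^2 = \mathcal{O}(\beta(L)^2)$.

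I then split each $E_j$ into three regimes: (III) threshold active with $w_j = \alpha_j/p_{L,j} + 1 - \sigma_j^2$, in which $E_j = \alpha_j(x_j^\dagger/p_{L,j} - \mathrm{sign}(x_j^\dagger))$ and hence $|E_j| \leqslant \beta(L)|x_j^\dagger| + \alpha_j$; (II) threshold active with $w_j = L$, for which the defining inequality $\sigma_j^2 < 1 - L + \alpha_j/p_{L,j}$ gives $|\sigma_j^2 - 1 + L| \leqslant \max\{1-L,\,\alpha_j/p_{L,j}\}$, so $|E_j| \leqslant (1-L)|x_j^\dagger|\,\chi_{\{\sigma_j^2 \leqslant 1-L\}} + \beta(L)|x_j^\dagger| + \alpha_j$; and (I) threshold inactive, where $|x_j^\dagger| \leqslant \alpha_j/w_j$ and a short subcase check on which term achieves the minimum in $w_j$ yields $|E_j| = (1-\sigma_j^2)|x_j^\dagger| \leqslant C\alpha_j$ for an absolute constant $C$.

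Summing the squared bounds, three contributions remain: $\beta(L)^2\|x^\dagger\|^2$; $\sum_j \alpha_j^2 = \mathcal{O}(\beta(L)^2)$ by observation two; and $(1-L)^2 \sum_{\sigma_j^2 \leqslant 1-L} |x_j^\dagger|^2$. For the last term, compactness of $\tilde{A}$ together with the Math\'e--Hofmann fact recalled in the proof of Corollary~\ref{corr:soft_TSVD_source_cond} produces an index function $\tilde{\psi}$ with $\sum_{\sigma_j^2 \leqslant 1-L} |x_j^\dagger|^2 = \mathcal{O}(\tilde{\psi}(1-L)^2)$. Using $\beta(L) = \mathcal{O}((1-L)\psi(1-L))$ and defining $\hat{\psi} := \max\{\psi,\tilde{\psi}\}$, the three contributions assemble to $\mathcal{O}((1-L)^2\hat{\psi}(1-L)^2)$, giving the local approximation property \eqref{eq:conv_trainingproperty_ass} with index function $\hat{\psi}$. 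The main obstacle is the interplay between $\alpha_j$, $p_{L,j}$ and the two branches of $w_j$: without the monotonicity observation $p_{L,j} \geqslant p_j^\dagger$, the factor $|x_j^\dagger|/p_{L,j}$ in regime (III) would be uncontrolled, and without the square-summability of $(p_j^\dagger)_j$ the bias-like contribution $\sum_j \alpha_j^2$ would not converge.
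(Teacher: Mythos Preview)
Your proof is correct and reaches the same conclusion as the paper's, but two technical choices differ from the authors'. First, the paper does not do your three-regime split on threshold activity; instead it writes
\[
\bigl(\max(0,w_j|x_j^\dagger|-\alpha_j)-(1-\sigma_j^2)|x_j^\dagger|\bigr)^2 \leqslant 2\bigl((1-\sigma_j^2-w_j)^2|x_j^\dagger|^2+\alpha_j^2\bigr)
\]
via an add-and-subtract of $w_j|x_j^\dagger|$ and the $1$-Lipschitz property of $t\mapsto\max(0,t)$, and only afterwards splits on which branch of $w_j=\min\{\alpha_j/p_j+1-\sigma_j^2,\,L\}$ is active. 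This collapses your regimes (I)--(III) into a single inequality and sidesteps the separate ``threshold inactive'' subcase. Second, the paper does not prove your monotonicity claim $p_{L,j}\geqslant p_j^\dagger$; instead it argues that for the finite dataset one has $p_{L,j}=p_j^\dagger$ for all $j$ once $L$ is sufficiently close to $1$, because $\alpha_j(L)/L\to 0$ forces $I_j(L)=\{1,\dots,N\}$. Your weighted-average argument is more robust (it does not require the index sets to stabilize uniformly in $j$) and gives the needed bound $\alpha_j/p_{L,j}\leqslant\beta(L)$ directly; the paper's route is shorter but leans more heavily on finiteness. Your bound $\|p^\dagger\|^2\leqslant\sum_i\|x^{(i)}\|^2$ via $p_j^\dagger\leqslant\max_i|\langle x^{(i)},v_j\rangle|$ is also sharper than the paper's Young-inequality estimate $\|p^\dagger\|^2\leqslant 2^{N-1}\sum_i\|x^{(i)}\|^2$. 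The final assembly---$\beta(L)^2\|x^\dagger\|^2$, $\sum_j\alpha_j^2$, and $(1-L)^2\sum_{\sigma_j^2\leqslant 1-L}|x_j^\dagger|^2$ controlled via Math\'e--Hofmann---matches the paper exactly.
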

\begin{proof}
We first verify that $p_{L},p^\dagger \in X$ ($p_L=\sum_{j\in \N} p_{L,j} v_j$).
For this, we exploit for each $j\in \N$ that 
\begin{equation}
\frac{|\langle x^{(i)},v_j\rangle |}{\sum_{i=1}^N |\langle x^{(i)},v_j \rangle|} \leqslant 1
\end{equation}
for any $i$. We thus obtain via Young's inequality
\begin{equation}
    \|p^\dagger\|^2 = \sum_{j\in\N} (p_j^\dagger)^2 \leqslant 
    %\sum_{j\in\N} \bigg(\sum_{i\in I_j^\dagger} |\langle x^{(i)},v_j \rangle|\bigg)^2 = 
    \sum_{j\in\N} \bigg(\sum_{i=1}^N |\langle x^{(i)},v_j \rangle|\bigg)^2 \leqslant 2^{N-1} \sum_{i=1}^N \| x^{(i)}\|^2 < \infty.
\end{equation}
Analogously for $p_L$. Due to the finite nature of the data set and the properties of $\alpha$ we immediately have  $p_L \rightarrow p^\dagger$ for $L\to 1$ and $p_L=p^\dagger$ for $L$ being sufficiently close to $1$.

We now continue with the approximation property, making use of the notation $x_j:=\langle x^\dagger,v_j \rangle$ and a sufficiently large $L<1$ where $w_j=\min\{\frac{\alpha_j(L)}{p^\dagger_{j}} + 1 - \sigma_j^2, L \}\geqslant 0$:
\begin{align}
        \|\varphi_{\theta(L)}(x^\dagger) - A x^\dagger\|^2 & = \sum_{j\in \N} \left( \max(0,w_j |x_j| - \alpha_j(L))  - (1-\sigma_j^2) |x_j|  \right)^2 \notag \\
& \leqslant 2 \sum_{j\in \N} (1-\sigma_j^2-w_j)^2 |x_j|^2  + (w_j |x_j| - \max(0,w_j |x_j| - \alpha_j(L)) )^2 \notag \\ 
& = 2 \sum_{j\in \N} (1-\sigma_j^2-w_j)^2 |x_j|^2  + (\max(0, w_j |x_j|) - \max(0,w_j |x_j| - \alpha_j(L)) )^2 \notag \\ 
& \leqslant 2 \sum_{j\in \N} (1-\sigma_j^2-w_j)^2 |x_j|^2  + \alpha_j(L)^2 \notag \\ 
& \leqslant 2 \sum_{j\in \N} (1-\sigma_j^2-w_j)^2 |x_j|^2  + 2 \beta(L)^2 \|p^\dagger \|^2 \label{eq:soft_thres_source_auxil1}
        \end{align}
due to Young's inequality, the Lipschitz continuity of the ReLU function, and the assumption on $\alpha_j$. 
We further obtain
\begin{align}
        \sum_{j\in \N} (1-\sigma_j^2-w_j)^2 |x_j|^2  & =\underset{=:(I)}{ \underbrace{\sum_{j:  \sigma_j^2\leqslant 1-L + \frac{\alpha_j(L)}{p_j^\dagger}} (1-\sigma_j^2-w_j)^2 |x_j|^2}} \notag \\
 & + \underset{=:(II)}{ \underbrace{\sum_{j: \sigma_j^2> 1-L + \frac{\alpha_j(L)}{p_j^\dagger}}  (1-\sigma_j^2-w_j)^2 |x_j|^2 }} .      
        \end{align}

We thus need to derive estimates for $(I)$ and $(II)$.
\begin{align}
    (I) & = \sum_{j: \sigma_j^2\leqslant 1-L + \frac{\alpha_j(L)}{p_j^\dagger}} (1-\sigma_j^2-L)^2 |x_j|^2 \notag\\ 
    &\leqslant \sum_{j: \sigma_j^2\leqslant 1-L + \beta(L)} (1-\sigma_j^2-L)^2 |x_j|^2 \notag \\
        &\leqslant \sum_{j: \sigma_j^2\leqslant 1-L } (1-\sigma_j^2-L)^2 |x_j|^2 + \sum_{j:\sigma_j^2> 1-L  \land \sigma_j^2\leqslant 1-L + \beta(L)} (1-\sigma_j^2-L)^2 |x_j|^2 \notag \\
                &\leqslant \sum_{j: \sigma_j^2\leqslant 1-L } (1-L)^2 |x_j|^2 + \sum_{j:\sigma_j^2> 1-L  \land \sigma_j^2\leqslant 1-L + \beta(L)} \beta(L)^2 |x_j|^2 \notag \\
                &\leqslant (1-L)^2 \sum_{j: \sigma_j^2\leqslant 1-L } |x_j|^2 + \beta(L)^2 \|x^\dagger\|^2,  \label{eq:soft_thres_source_auxil2}
\end{align}
where we again exploit Young's inequality and the properties of $a_j$.
For $(II)$ we immediately obtain
\begin{equation}
    (II)= \sum_{j: \sigma_j^2> 1-L + \frac{\alpha_j(L)}{p_j^\dagger}} \left(\frac{\alpha_j(L)}{p^\dagger_j}\right)^2 |x_j|^2 \leqslant \beta(L)^2 \|x^\dagger \|^2. \label{eq:soft_thres_source_auxil3}
\end{equation}

Due to the properties of $\beta$, $(II)$ already has the desired characteristics for $L\to 1$. In contrast, $(I)$ requires some further calculations.
Following the same line of reasoning as in the proof of Corollary~\ref{corr:soft_TSVD_source_cond} we can find an index function $\psi'$ for any $\varepsilon>0$ such that
\begin{align}
    \sum_{j: \sigma_j^2\leqslant 1-L }   \langle x^\dagger, v_j \rangle^2 &  \leqslant  (1+\varepsilon)^2 \|x^\dagger\|^2  \psi'\left(1-L \right)^2 \notag \\
    & = \mathcal{O}(\psi'(1-L)^2).
\end{align}
%where $\psi''(z)=\psi'(z+\psi(z))$ is again an index function. 
Combining this with \eqref{eq:soft_thres_source_auxil1}, \eqref{eq:soft_thres_source_auxil2} and \eqref{eq:soft_thres_source_auxil3} we obtain the desired result.
\end{proof}

\begin{remark}
  Analogous to the line of reasoning in the proof of Lemma~\ref{lem:squared_soft_local_approx}, we split the series into two sums, (I) and (II). 
  (I) takes care of small singular values and needs to be related to a property of the element $x^\dagger$ in terms of a certain kind of source condition. The second sum (II) is somehow related to the approximation properties of the underlying architecture. In the proof of Lemma~\ref{lem:squared_soft_local_approx} it is immediately zero for any $x^\dagger$ which is due to the linear network assumption therein. In contrast, in the previous proof we had to carefully control the behavior of this term since it is strongly influenced by the structure of the nonlinearity.
\end{remark}

In general, the previous lemma serves as an example of the interplay between the approximation capabilities of the network and the resulting regularization properties. 

\section{Numerical experiments}
\label{sec:numerics}
% -------------------------------
In this section, we present numerical results in order to compare our theoretical findings from Section~\ref{sec:local_approx_property} and Section~\ref{sec:specialization} to its corresponding numerical implementations and extend these by experiments in a more general setting by learning from noisy measurements. To this end, we use a Radon transform with $30$ angles and $41$ detector pixels as our forward operator. We discretize the Radon transform for $28 \times 28$~px images. This results in a linear forward operator $\origA : \R^{28 \times 28} \to \R^{30 \times 41}$. For the training of the different iResNet architectures, we use the well-known MNIST dataset of handwritten digits~\cite{lecun1998mnist}. This dataset is split into \num{60000} images for training and \num{10000} images for testing. For our experiments, we treat the $28\times 28$ images as column vectors of length $784$ and use fully connected layers in all network architectures. We optimize the networks using Adam~\cite{KingmaB14} and a learning rate scheduling scheme.

There are multiple ways to satisfy the Lipschitz constraint during training. Bungert et al.~\cite{bungert2021clip} use an additional penalty term in the loss function. However, this method does not strictly enforce the constraint. Behrmann et al.~\cite{behrmann2019invertible} use contractive nonlinearities and only constrain the Lipschitz constant of each individual linear mapping. They compute the Lipschitz constant using a power iteration and normalize the weights after each training step. We observe an improvement of convergence when directly parameterizing the weights to fulfill a specific Lipschitz constant by extending the approach in~\cite{miyato2018spectral}.

In Section~\ref{sec:num_local_approximation}, we show experiments on the local approximation property in Theorem~\ref{thm:convergence}. To this end, we use the diagonal architecture proposed in Section~\ref{sec:diagonal_architecture}. The resulting data-dependent filter functions are visualized in Section~\ref{sec:num_data_dependent_filters}, and the convergence property is considered in Section~\ref{sec:num_convergence}. 
%together with the corresponding training accuracies that were achieved during training for a variation of model types. %Finally, in Section~\ref{sec:num_filter_function_specialized} we highlight that the theoretical filter functions for the specialized architectures in Section~\ref{sec:specialization} also arise in the numerical implementation.

We implement each subnetwork $f_{\theta, j}, \ j=1, \dots, n$, (see \eqref{eq:diagonal_architecture_residual}) as a small fully-connected neural network with independent weights for each $j$, where $n$ denotes the total number of singular values in the discrete setting. Each subnetwork consists of three layers, where the first two layers $f_{\theta,j}^{k}$, $k = 1,2$ each contain $35$ hidden neurons, and its final layer $f_{\theta,j}^{3}$ contains $1$ neuron. Every layer is equipped with a linear matrix multiplication and corresponding additive bias and a ReLU activation function ($k=1,2$). Accordingly, each subnetwork has $1366$ trainable parameters. An illustration of the architecture is provided in Figure~\ref{fig:diagonal-numerics}. Altogether, the parameters of the subnetworks determine the parameters $\theta \in \Theta(L)$ of $\varphi_{\theta} = \mathrm{Id} - f_{\theta}$, where $\Theta(L)$ includes the network parameters as well as the Lipschitz constraint being realized by constraints on the network parameter. Here, we enforce the Lipschitz constants $\mathrm{Lip}(f_{\theta,j}) \leqslant L$, $j=1,\hdots,n$, by constraining $\mathrm{Lip}(f_{\theta,j}^{k}) \leqslant 1$ for $k = 1,2$ and $\mathrm{Lip}(f_{\theta,j}^{3}) \leqslant L$. In the following, we thus write $\theta(L)$ in order to give emphasis to the regularization parameter. Our training objective is the minimization of the \emph{approximation loss} \eqref{eq:training_minproblem}, i.e., minimize the loss
\begin{equation}
      l(\varphi_\theta,A)=  \frac{1}{N} \sum_{i} \|\varphi_\theta(x^{(i)}) - A x^{(i)} \|^2   
\end{equation}
subject to $\mathrm{Lip}(f_\theta) \leqslant  L$.

The source code corresponding to the experiments in this section is available at \url{https://gitlab.informatik.uni-bremen.de/inn4ip/iresnet-regularization}.
%The parameter $L$ determines the Lipschitz bound of any trained network by $\mathrm{Lip}(f_{\theta,j}) \leqslant L$ for $j=1,\hdots,n$.

%{\color{blue}
%We can show experiments for: 
%\begin{itemize}
%    \item nuemrisches Ergebnis zum training im allgemeinen Fall (outlook für part 2). 
%    \item \textit{weitere Ideen?}
%\end{itemize}
%}

    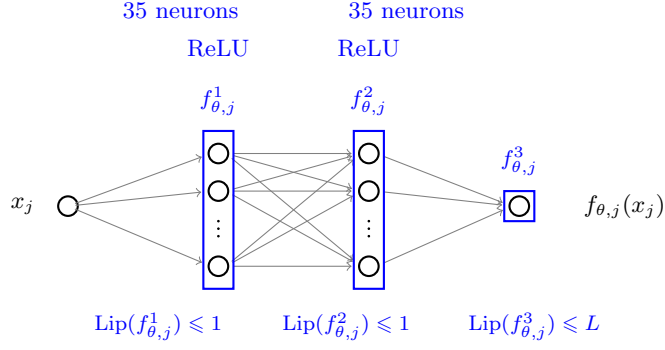
\begin{figure}
    \centering
        \begin{tikzpicture}[scale=1]
\small
%\first net

\draw[color=black, thick] (0,4.3) circle (0.13);
\draw[->, color=gray,  shorten >=1.3mm, shorten <=1mm] (0,4.3) -- (1.9,5);
\draw[->, color=gray,  shorten >=1.3mm, shorten <=1mm] (0,4.3) -- (1.9,4.5);
\draw[->, color=gray,  shorten >=1.3mm, shorten <=1mm] (0,4.3) -- (1.9,3.5);
\draw[color=black, thick] (2,5) circle (0.13);
\draw[color=black, thick] (2,4.5) circle (0.13);
\filldraw[color=black] (2,4.1) circle (0.01);
\filldraw[color=black] (2,4) circle (0.01);
\filldraw[color=black] (2,3.9) circle (0.01);
\draw[color=black, thick] (2,3.5) circle (0.13);

\draw[->, color=gray,  shorten >=1.3mm, shorten <=1mm] (2.1,5) -- (3.9,5);
\draw[->, color=gray,  shorten >=1.3mm, shorten <=1mm] (2.1,5) -- (3.9,4.5);
\draw[->, color=gray,  shorten >=1.3mm, shorten <=1mm] (2.1,5) -- (3.9,3.5);
\draw[->, color=gray,  shorten >=1.3mm, shorten <=1mm] (2.1,4.5) -- (3.9,5);
\draw[->, color=gray,  shorten >=1.3mm, shorten <=1mm] (2.1,4.5) -- (3.9,4.5);
\draw[->, color=gray,  shorten >=1.3mm, shorten <=1mm] (2.1,4.5) -- (3.9,3.5);
\draw[->, color=gray,  shorten >=1.3mm, shorten <=1mm] (2.1,3.5) -- (3.9,5);
\draw[->, color=gray,  shorten >=1.3mm, shorten <=1mm] (2.1,3.5) -- (3.9,4.5);
\draw[->, color=gray,  shorten >=1.3mm, shorten <=1mm] (2.1,3.5) -- (3.9,3.5);

\draw[color=black, thick] (4,5) circle (0.13);
\draw[color=black, thick] (4,4.5) circle (0.13);
\filldraw[color=black] (4,4.1) circle (0.01);
\filldraw[color=black] (4,4) circle (0.01);
\filldraw[color=black] (4,3.9) circle (0.01);
\draw[color=black, thick] (4,3.5) circle (0.13);

\draw[->, color=gray,  shorten >=1.3mm, shorten <=1mm] (4.1,5) -- (5.9,4.3);
\draw[->, color=gray,  shorten >=1.3mm, shorten <=1mm] (4.1,4.5) -- (5.9,4.3);
\draw[->, color=gray,  shorten >=1.3mm, shorten <=1mm] (4.1,3.5) -- (5.9,4.3);
\draw[color=black, thick] (6,4.3) circle (0.13);

\draw[blue, thick]   (1.8,5.3) rectangle (2.2,3.2) ;
\draw[blue, thick] (3.8,5.3) rectangle (4.2,3.2);
\draw[blue, thick] (5.8,4.5) rectangle (6.2,4.1);

\draw[] (-0.6,4.3) node {$x_j$};
%\draw[] (-1.2,1.8) node {$x_2$};
%\draw[] (-1.2,-1.4) node {$x_j$};

%\draw[] (9.2,5) node {Outputs};
\draw[] (7.4,4.3) node {$f_{\theta,j}(x_j)$ };
%\draw[] (7.4,1.8) node {$\fthetatwo(x_2)$ };
%\draw[] (7.4,-1.4) node {$\fthetaj(x_j)$ };

\draw[blue] (2.0,5.7) node {\footnotesize$f_{\theta,j}^1$};
\draw[blue] (4.0,5.7) node {\footnotesize$f_{\theta,j}^2$};
\draw[blue] (6.0,4.9) node {\footnotesize$f_{\theta,j}^3$};

\draw[blue] (2.0,6.4) node {\small ReLU};
\draw[blue] (4.0,6.4) node {\small ReLU};
%\draw[blue] (6.0,4.9) node {\tiny$f_{\theta,1}^3$}

\draw[blue] (1.5,6.9) node {$35$ neurons};
\draw[blue] (4.5,6.9) node {$35$ neurons};

%\draw[blue] (2.0,5.7) node {\tiny$f_{\theta,j}^1$};
%\draw[blue] (4.0,5.7) node {\tiny$f_{\theta,j}^2$};
%\draw[blue] (6.0,4.9) node {\tiny$f_{\theta,j}^3$};

\draw[blue] (1.2,2.7) node {\footnotesize$\mathrm{Lip}(f_{\theta,j}^1) \leqslant 1$};
\draw[blue] (3.7,2.7) node {\footnotesize$\mathrm{Lip}(f_{\theta,j}^2) \leqslant 1$};
\draw[blue] (6.2,2.7) node {\footnotesize$\mathrm{Lip}(f_{\theta,j}^3) \leqslant L$};

\end{tikzpicture}
\caption{Illustration of the subnetworks $f_{\theta,j}:\R \to \R, \ j=1,\hdots,n$. Each having $1366$ trainable parameters.}
\label{fig:diagonal-numerics}
    \end{figure}

\subsection{Local approximation property}
\label{sec:num_local_approximation}

According to Theorem~\ref{thm:convergence}, we expect $\varphi_{\theta(L)}^{-1}$ to act as a regularizer as soon as the local approximation property~\eqref{eq:conv_trainingproperty_ass} is fulfilled. Note that the architecture does not change for varying $L$ in our experiments such that we write $\varphi_{\theta(L)}$ instead of $\varphi_{\theta(L),L}$. To be able to observe this behavior of our trained network, the local approximation property needs to be verified with respect to the trained network parameters. Therefore, we evaluate the approximation error with respect to fixed data samples
\begin{equation}
    \mathcal{E}_\mathrm{x^{(i)}}(\varphi_{\theta(L)},A) = \|\varphi_{\theta(L)}(x^{(i)}) - A x^{(i)} \|
\end{equation}
as well as the mean approximation error over the test data set
\begin{equation}
    \mathcal{E}_\mathrm{mean}(\varphi_{\theta(L)},A) = \frac{1}{N} \sum_{i} \|\varphi_{\theta(L)}(x^{(i)}) - A x^{(i)} \|.
\end{equation}
Figure~\ref{fig:loc_approx_MNIST} indicates superlinear convergence of $\mathcal{E}_\mathrm{mean}$ for $L \to 1$, i.e., the existence of an index function $\psi$ such that the local approximation property~\eqref{eq:conv_trainingproperty_ass} is satisfied within our test data set on average. This underpins the capability of the chosen network architecture and training to locally approximate the operator $A$ in terms of~\eqref{eq:conv_trainingproperty_ass}. Furthermore, the evaluation of $\mathcal{E}_{x^{(1)}}$ shows that the chosen sample $x^{(1)}$ fulfills the superlinear convergence and behaves very similarly to the mean error over the test data set. However, from the selection of data sample $x^{(2)}$ and corresponding error $\mathcal{E}_{x^{(2)}}$, we notice that the local approximation property does not hold for all samples of our test data set. Figure~\ref{fig:loc_approx_MNIST} additionally shows that some coefficients $x^{(2)}_j$ of $x^{(2)}$, corresponding to large singular values, severely deviate from the corresponding mean values with respect to the data distribution. This effect is reduced for $x^{(1)}$. Therefore, the slow, respectively non-, convergence of $\mathcal{E}_{x^{(2)}}$ could possibly be explained by the fact that structures outside of the core of the distribution of chosen singular values have not been learned properly during network training.

 % For $m = 1,\hdots,6$ we observe a decrease of $\nicefrac{ \mathcal{E}_\mathrm{approx}(\varphi_{\theta(L_m)},A)}{1-L_m}$ towards the origin as $1-L_m$ decreases. This indicates that the local approximation property is satisfied within our training data set. However, the slight increase at $m=7,8$ potentially results from numerical instabilities. Thus a limit of accuracy, which can be achieved during such training process, might be reached.

\begin{figure}
\captionsetup[subfigure]{labelformat=empty}
\begin{subfigure}{.13\textwidth}
    \centering

    \includegraphics[width=0.8\textwidth]{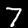}
    \vspace{-1.0ex}
    \caption{\footnotesize{$x^{(1)}$}}
    
    \vspace{2.0ex}
    \includegraphics[width=0.8\textwidth]{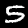}
    \vspace{-1.0ex}
    \caption{\footnotesize{$x^{(2)}$}}
    
    \vspace{1.0ex}
\end{subfigure}
\hfill
\begin{subfigure}{.85\textwidth}
    \centering
    \begin{tikzpicture}
        \pgfplotsset{
        every axis plot/.append style={thick},
        tick style={black, thick},
        every axis plot/.append style={line width=0.8pt},
        every axis/.style={
            axis y line=left,
            axis x line=bottom,
            axis line style={ultra thick,->,>=latex, shorten >=-.4cm}
        },
        }
        \begin{groupplot}[
            group style={
                group name=my plots,
                group size=2 by 1,
                ylabels at=edge left,
                horizontal sep=4.1cm
            },
            scale only axis=true,
            width=0.28\linewidth,
            height=4cm,
            axis lines=middle, %axis line style={myaxis}
            xlabel = {$1-L$},
            ytick = {0.,5,10,15,20,25,30},
            xtick = {0, 250,500, 750},
            ymin = 0.0,
            ymax = 0.22,
            xmin = 0.0,
             x label style={at={(axis description cs:1.14,0.1)},anchor=north},
             y label style={at={(axis description cs:0.035,1.32)},anchor=north},
        ]
        %TODO: löschen
        % \nextgroupplot[
        %     grid = both,
        %     ylabel = {\large{$\frac{ \mathcal{E}_\mathrm{approx}(\varphi_{\theta(L)},A)}{1-L}$}}, 
        %     y tick label style={
        %                         /pgf/number format/.cd,
        %                             fixed,
        %                             fixed zerofill,
        %                             precision=2,
        %                         /tikz/.cd
        %                     }
        % ]
        % \addplot table [x=a, y=rel_norm, col sep=comma]{csv_files/loc_approx_MNIST_updated.csv};

        \nextgroupplot[
            xmode=log,ymode=log, 
            ymin =0.0001, ymax=0.135,
            xmin=0.001, xmax=1,
            axis y line=left,
            ytick={0.001,0.01,0.1},
            xtick={0.001,0.01,0.1,1},
            axis x line=bottom,
            x label style={at={(axis description cs:1.2,0.1)},anchor=north},
            y label style={at={(axis description cs:0.025,1.27)},anchor=north},
            legend style={at={(1.87,1)},draw=none},
            legend cell align={left}
        ]
        \addplot[color=black,dashed, domain=0.001:1]{0.3*x};
        \addlegendentry{{\footnotesize{$\mathcal{O}\big((1-L)\big)$}}}
        \addplot[color=gray,dashed, domain=0.001:1]{0.3*x^2};
        \addlegendentry{{\footnotesize{$\mathcal{O}\big((1-L)^2\big)$}}}
        \addplot[teal,mark=*] table [x=a, y=norm, col sep=comma]{csv_files/loc_approx_MNIST_updated_2.csv};
        \addlegendentry{{$\mathcal{E}_\mathrm{mean}(\varphi_{\theta(L)},A)$}}
        \addplot[blue,mark=*] table [x=a, y=xtrue_norm, col sep=comma]{csv_files/loc_approx_MNIST_updated_2.csv};
        \addlegendentry{{$\mathcal{E}_{x^{(1)}}(\varphi_{\theta(L)},A)$}}
        \addplot[red,mark=*] table [x=a, y=xfalse_norm, col sep=comma]{csv_files/loc_approx_MNIST_updated_2.csv};
        \addlegendentry{{$\mathcal{E}_{x^{(2)}}(\varphi_{\theta(L)},A)$}}

        \nextgroupplot[xlabel={$j$},ymin = 0, axis y line*=left,
            ymax = 33, xmin=-25,xmax=784, legend style={at={(1.2,1)},draw=none}]
        \addplot[red,mark=none] table [x=j, y=Ex2_squared, col sep=comma]{csv_files/dist_mean_x1_x2.csv};
        \addlegendentry{{$|x^{(2)}_{j} - \frac{1}{N} \sum_{i} x^{(i)}_j|^2$}}
        \addplot[blue,mark=none] table [x=j, y=Ex1_squared, col sep=comma]{csv_files/dist_mean_x1_x2.csv};
        \addlegendentry{{$|x^{(1)}_{j} - \frac{1}{N} \sum_{i} x^{(i)}_j|^2$}}
        \end{groupplot}
    \end{tikzpicture}
    \end{subfigure}
    \caption{Test samples $x^{(1)}$ and $x^{(2)}$~\textit{(left)}. Evaluations of $\mathcal{E}_\mathrm{mean}(\varphi_{\theta(L_m)},A), \ \mathcal{E}_{x^{(1)}}(\varphi_{\theta(L_m)},A)$ and $\mathcal{E}_{x^{(2)}}(\varphi_{\theta(L_m)},A)$ for $L_m = 1-\nicefrac{1}{2^m}$ with $m=1,\hdots,8$ on the MNIST test data set~\textit{(middle)}. 
    %Each parameter $L_m$ determines the Lipschitz bound of a trained network by $\mathrm{Lip}(f_{\theta,j}) \leqslant L_m$ for $j=1,\hdots,n$. 
    Squared absolute differences of the coefficients $x_j^{(k)}=\langle x^{(k)},v_j \rangle$ for $k=1,2$ to the mean value of corresponding coefficients in the test data set~\textit{(right)}.}
    %In this case, $\{x^{(i)}\}_{i=1}^N$ defines the set of test data.
    %In the left graph, we show $\nicefrac{ \mathcal{E}_\mathrm{approx}(\varphi_{\theta(L_m)},A)}{1-L_m}$ over $1-L_m$ for $m = 1,\hdots,8$. For a decreasing value of $(1-L_m)$ we observe a decrease of the corresponding error measures $\nicefrac{ \mathcal{E}_\mathrm{approx}(\varphi_{\theta(L_m)},A)}{1-L_m}$ to zero.}
    
    \label{fig:loc_approx_MNIST}
\end{figure}

% {\color{blue}
% \begin{itemize}
%     \item Auswertung für diagonal architektur: $L \to 1$, (echte Lipschitz konstante), fehler für $L \to 1$ (\checkmark)
% \end{itemize}
% %We trainieren netzwerke für $L=\dots$  + (training für lfgbs)
% }

\subsection{Data-dependent filter functions for diagonal architecture}
\label{sec:num_data_dependent_filters}

% Einmal filter
% \begin{align}
%     \sigma_j^2 r(\sigma_j^2, \frac{1}{n} \sum_{i=1}^n \langle x^{(i)}, v_j \rangle)
% \end{align}
% und entsprechender Bias. \\

% und für ein bestimmtes bild $x^{(i)}$
% \begin{align}
%     \sigma_j^2 r(\sigma_j^2, \langle x^{(i)}, v_j \rangle)
% \end{align}

For the experiments in this and the subsequent section, we train networks $\varphi_{\theta(L)}$ with Lipschitz bounds $L\in \{L_m=1-\nicefrac{1}{3^m} \, | \, m=1,\hdots5 \}$. We also include additive Gaussian noise in the network training via minimizing the approximation loss~\eqref{eq:training_minproblem}. More precisely, for each training sample $x^{(i)}$ we generate $Ax^{(i)}+\eta^{(i)}$ with $\eta^{(i)} \sim \mathcal{N}(0,\delta_\ell \mathrm{Id})$ and relative noise level ${\delta_\ell = \hat{\delta}_\ell \cdot \mathrm{std}_\mathrm{MNIST}}$, where ${\mathrm{std}_\mathrm{MNIST}}$ denotes the averaged standard deviation of the coefficients $\langle x^{(i)}, v_j \rangle$ of the MNIST data set (standard deviation with respect to $i$, mean with respect to $j$) and
\begin{equation}\label{eq:noise_levels}
    \hat{\delta}_\ell = \begin{cases}
        \left(\frac{1}{3}\right)^{7-\ell} &\text{ for } 0 < \ell < 7 \\
        0 &\text{ for } \ell = 0.
    \end{cases}
\end{equation} 
The loss function $l$ then amounts to 
\begin{equation}\label{eq:loss_function_approx}
   \min_{\theta \in \Theta(L)} l(\varphi_\theta,A) = \min_{\theta \in \Theta(L)}\frac{1}{N} \sum_{i=1}^N\| \varphi_{\theta}(x^{(i)})-Ax^{(i)}-\eta^{(i)} \|^2.
\end{equation}
Note that this includes~\eqref{eq:training_minproblem} in the noise-free case.
Trained networks on noisy samples with noise level $\delta$ with a particular Lipschitz bound $L$ are denoted by $\varphi_{\theta(L,\delta)}$. The noise-free training outcome is denoted by $\varphi_{\theta(L)}$.

Utilizing identical network architectures as in the previous subsection, we evaluate the learned filter functions of chosen networks. Analogously to \eqref{eq:linear_filter_with_bias} the reconstruction can be written as 
\begin{equation}
    T_L(y)= \varphi_\theta^{-1}(\origA ^\ast y) =  \hat{b}_L + \sum_{j \in \mathbb{N}} \hat{r}_L (\sigma_j^2, \sigma_j \langle y, u_j \rangle) \sigma_j \langle y, u_j \rangle v_j
\end{equation}
for $\hat{b}_L \in X=\mathbb{R}^n$.
As stated in Section~\ref{sec:diagonal_architecture}, the learned filter function of a trained network with diagonal architecture follows immediately from its subnetworks. Due to the additional bias, we make a small adaption to~\eqref{eq:filter_function}, which gives
\begin{equation}
    (\mathrm{Id}-f_{\theta,j})^{-1}(s) - \hat{b}_{L,j}  = r_L(\sigma_j^2,s)s \qquad \text{for} \ s \in \mathbb{R},
\end{equation}
where each entry $\hat{b}_{L,j}=\langle b_L, v_j\rangle$ corresponds to the axis intercept of a subnetwork $\varphi_{\theta,j}, \ j = 1,\hdots,n$, i.e., $\hat{b}_{L,j}=(\mathrm{Id}-f_{\theta,j})^{-1}(0)$. Since $f_{\theta,j}$ corresponds to $\sigma_j$ or $\sigma_j^2$, respectively, we also write $\hat{b}_{L,j} = \hat{b}_L(\sigma_j^2)$.

Adapting to the mean values with respect to the distribution of each coefficient, we compute 
\begin{equation}
    \mu_j := \frac{1}{N} \sum_{i=1}^N \langle x^{(i)},v_j \rangle \quad \text{for} \ j =1,\hdots,n
\end{equation} and evaluate the filter function with respect to its second variable at the continuous extension $\sigma^2\mu(\sigma^2)$ with $\mu(\sigma_j^2)=\mu_j$, $j=1,\hdots,n$. 

From Figure~\ref{fig:learned_filter_functions} we notice that the regularization of smaller singular values increases with decreasing Lipschitz bound $L$ for the mean MNIST data sample. This is in line with the regularization theory, as $L$ serves as our regularization parameter. Thus, the filter plots in Figure~\ref{fig:learned_filter_functions} visualize how $L$ acts as a regularization parameter. The trained diagonal networks $\varphi_{\theta(L_m)}$ for $L_2$ and $L_5$ show a slightly different behavior at $\sigma_j^2\approx 0.27$. The distributions of the coefficients $\langle x, v_j \rangle_{j=1,\hdots,n}$ are treated independently and in this particular singular value we observed a wider distribution of coefficients $\langle x, v_j \rangle$ in the dataset. The inherent structure within the dataset might be one possible explanation for this outlier. 
In general, when neglecting the previously mentioned outlier, for the mean MNIST sample one can observe a similarity to the squared soft TSVD filter function (see Section~\ref{sec:soft_TSVD}) to some extent. 
In addition, the observed decay of $\|\hat{b}_L\|$ with increased $L$ is also in line with the necessary condition for a classical filter function with bias to become a convergent regularization (see Lemma~\ref{lem:filter_regularization_with_bias}).
The observed increase for the largest $L$ is most likely caused by a numerical instability when evaluating $(\mathrm{Id}-f_{\theta,j})^{-1}(0)$ via the fixed point iteration performed for 30 iterations.

\begin{figure}
\centering
\begin{tikzpicture}
    \begin{groupplot}[
        group style={
            group name=my plots,
            group size=2 by 1,
            ylabels at=edge left,
            horizontal sep=1.8cm, vertical sep=1.5cm
        },
        footnotesize,
        width=0.44\linewidth,
        height=5cm,
        tickpos=left,
        y tick label style={/pgf/number format/fixed,
        /pgf/number format/1000 sep = \thinspace},
        x tick label style={/pgf/number format/fixed,
        /pgf/number format/1000 sep = \thinspace},
        ytick align=outside,
        xtick align=outside,
        enlarge x limits=false,
        xmin=0,xmax=1,
        xlabel={$\sigma^2$},
        ymin=-0.0, ymax=1.2,
        legend pos=south east,
        legend style={draw=none},
        legend cell align={left},
        grid = both,
        %no markers,
        xtick={0,0.25,0.5,0.75,1},
        ytick={0,0.5,1},
        anchor=north,
        clip=true,
    ]

        \nextgroupplot[ylabel={$\sigma^2\hat{r}_L(\sigma^2,\sigma^2\mu(\sigma^2))$}, no markers]
        
        \addplot table [x=sigma, y=f, col sep=comma, mark=none]{csv_files/filter_n5_m0_meancoefs.csv};
        \addlegendentry{{$L_5=0.996$}}
        \addplot table [x=sigma, y=f, col sep=comma, mark=none]{csv_files/filter_n2_m0_meancoefs.csv};
        \addlegendentry{{$L_2=0.889$}}
        \addplot table [x=sigma, y=f, col sep=comma, mark=none]{csv_files/filter_n1_m0_meancoefs.csv};
        \addlegendentry{{$L_1=0.667$}}
       % \addplot[color=red,only marks,mark=*]{0*x};
        \addplot[color=black,only marks,mark=*,mark size=1pt] table [x=sigma, y=zeros, col sep=comma, mark=none]{csv_files/filter_n2_m0_meancoefs.csv};
        
        \nextgroupplot[ylabel={$\|\hat{b}_L\|$}, 
        xlabel ={$L$},
        ymin=0, ymax=1.4,
        ytick={0,0.5,1.0}, 
        xtick={0.7,0.8,0.9,1},
        xmin = 0.667, xmax=0.996]
        \addplot table [x=L, y=norm_b, col sep=comma, mark=*]{csv_files/filter_norm_b_m0.csv};
        %\addplot table [x=L, y=std_b, col sep=comma, mark=*]{csv_files/filter_std_b_m4.csv};
    \end{groupplot}
\end{tikzpicture}
\caption{Learned filter functions $r_L(\sigma^2, \sigma^2\mu(\sigma^2))$ of the iResNet with diagonal architecture and zero noise level $\delta_0$ for different Lipschitz constraints $L$~\textit{(left)} where the eigenvalues $\sigma_j^2$ are highlighted as black bullets on the x-axis. $\|\hat{b}_L\|$ corresponding to the axis intersection $\hat{b}_L=\sum_j \hat{b}_{L,j} v_j$, evaluated for $L_m$, $m=1,\hdots,5$~\textit{(right)}.}
\label{fig:learned_filter_functions}
\end{figure}
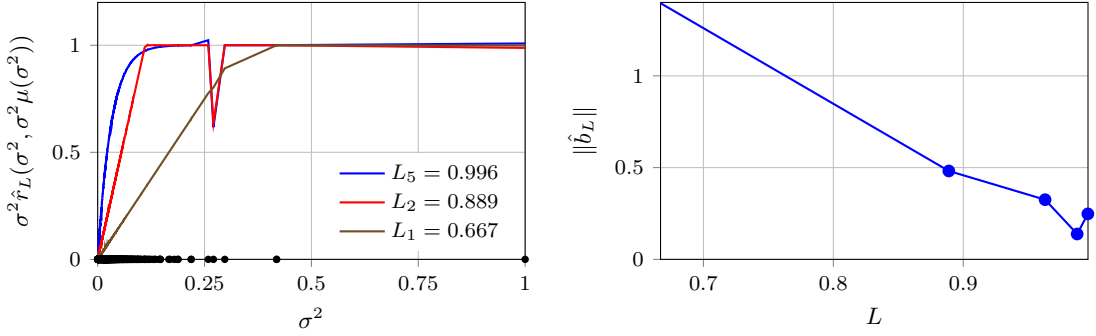

Figure~\ref{fig:reconstructions} includes reconstructions of a fixed sample from the test data set. This example illustrates the effect of a regularization induced by the Lipschitz bound even if the training takes noise into account. It becomes stronger for small values of $L$, which coincides with our previous observations. Reconstructions resulting from increased noise levels require stronger regularizations in order to improve reconstruction quality. Therefore, the best reconstructions in case of $\hat{\delta}_0$ and $\hat{\delta}_1$ result from $\varphi_{\theta(L_3,\delta_0)}^{-1}$ and $\varphi_{\theta(L_3,\delta_1)}^{-1}$. In comparison, $\varphi_{\theta(L_2,\delta_4)}^{-1}$ and $\varphi_{\theta(L_2,\delta_3)}^{-1}$ provide improved reconstructions for the cases $\hat{\delta}_4$ and $\hat{\delta}_3$. Moreover, at $L_1$ we notice similar blurred structures in the background of the reconstructed digit for all noise levels. One might argue that its structure compares to a handwritten digit itself, making the learned background pattern being encoded in the bias $\hat{b}_L$ suitable to the data set. These additional observations from Figure~\ref{fig:reconstructions} indicate a dependency of the regularization on learned data structures.
The corresponding filter functions being illustrated in the right column of Figure~\ref{fig:reconstructions} show a similar behavior for all training noise levels which underpins that the outcome of the approximation training is independent of the noise for a sufficiently large number of training samples.
The outlier in the filter function for the mean sample $(\mu_j)_j$ can also be observed in Figure~\ref{fig:reconstructions}. In addition, this test sample has a slightly different behavior with respect to the second singular value. Note that the seemingly linear behavior for $\sigma_j^2>0.4$ is only due to the fact that this is the linear continuous extension between the first and second singular value. 
In summary, the resulting filter behaves similarly to the squared soft TSVD filter independent of the training noise and with exceptions at two singular values.

\begin{figure}
    \centering
    \includegraphics[width=0.9\textwidth]{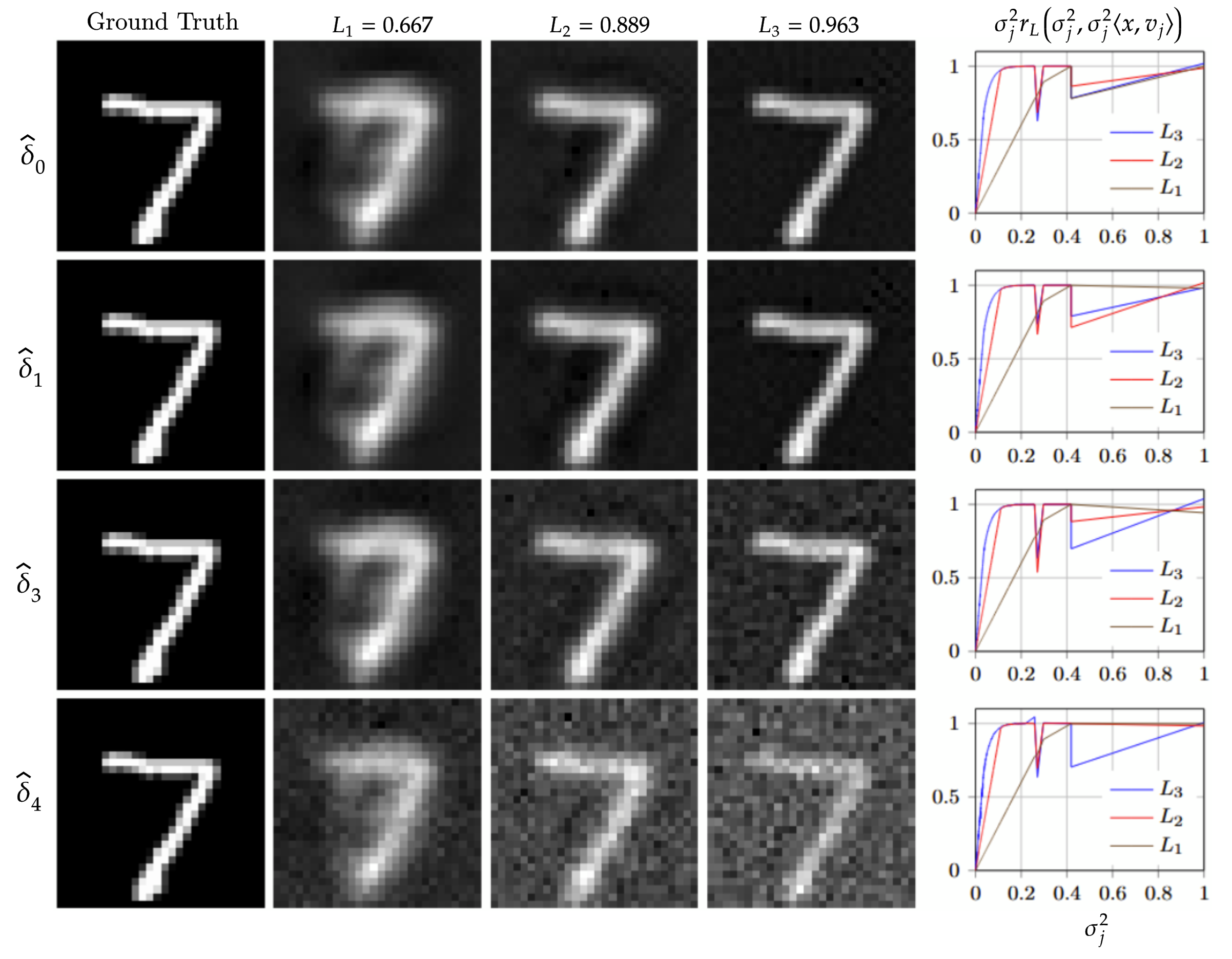}
    \caption{Reconstructions of an MNIST sample $\tilde{x}$ from the test data set by computing $\varphi_{\theta(L_m,\delta_\ell)}^{-1}(A\tilde{x}+\tilde{\eta})$ with $\tilde{\eta} \sim \mathcal{N}(0,\delta_\ell Id)$ for Lipschitz bounds $L_m$~\textit{(columns)} and noise levels $\delta_\ell= \hat{\delta}_\ell \cdot \mathrm{std}_\mathrm{MNIST}$ with $\ell =0,1,3,4$~\textit{(rows)}. The last column depicts the filter functions at $L_1$, $L_2$, and $L_3$ for each noise level with respect to the sample~$\tilde{x}$.}
    \label{fig:reconstructions}
\end{figure}

\subsection{Convergence property}
\label{sec:num_convergence}
After investigating approximation capabilities of trained networks with respect to the operator $A$ in Section~\ref{sec:num_local_approximation} in the noise-free case and extending the training to the noisy case in Section~\ref{sec:num_data_dependent_filters}, we continue verifying the convergence property with respect to different noise levels. 
We analyze the convergence property by computing the averaged \emph{reconstruction error}
\begin{equation}\label{eq:reco_error}
    \mathrm{MSE}_\mathrm{reco}^{\delta_\ell}(\varphi_{\theta(L,\delta)},A) = \frac{1}{N} \sum_{i=1}^N\| x^{(i)}-\varphi_{\theta(L,\delta)}^{-1}(Ax^{(i)}+\eta^{(i)}) \|^2,
\end{equation}
including the noise levels $\delta_\ell$ and noise samples $\eta^{(i)} \sim \mathcal{N}(0,\delta_\ell Id)$ in the reconstruction process where the network has been trained on noise level $\delta$. We thus can evaluate the noise-free training case, which solely aims to impart  data dependency, on reconstructions from noisy data, and the noisy training case where training and test noise share the same level. Reconstructions, i.e., the inverse of the iResNet, are obtained by using 30 fixed-point iterations. 

Figure~\ref{fig:MSE_reco} shows that $\varphi_{\theta(L,\delta_0)}^{-1}$ as well as $\varphi_{\theta(L,\delta_\ell)}^{-1}$ provides more accurate reconstructions with respect to ~\eqref{eq:reco_error} at large $L$ and low noise levels, whereas this behavior is reversed for decreasing Lipschitz bound and increasing noise levels. This is consistent with the regularization theory and the visualized reconstructions in Figure~\ref{fig:reconstructions}, as high noise levels require strong regularizations and vice versa. The behavior of $\mathrm{MSE}_\mathrm{reco}^{\delta_\ell}(\varphi_{\theta(L,\delta_\ell)},A)$ for small $L$ and small noise levels $\delta_\ell$ is rather intuitive, since its approximation capability is limited as a consequence of strong regularization. 
In addition, from Figure~\ref{fig:MSE_reco} one can also extract a suitable candidate for the parameter choice $L(\delta)$ to obtain convergence. 
The similarity in the behavior of $\varphi_{\theta(L,\delta_0)}^{-1}$ and $\varphi_{\theta(L,\delta_\ell)}^{-1}$ underpins that the outcome of the approximation training is independent of noise if data and noise are independent.

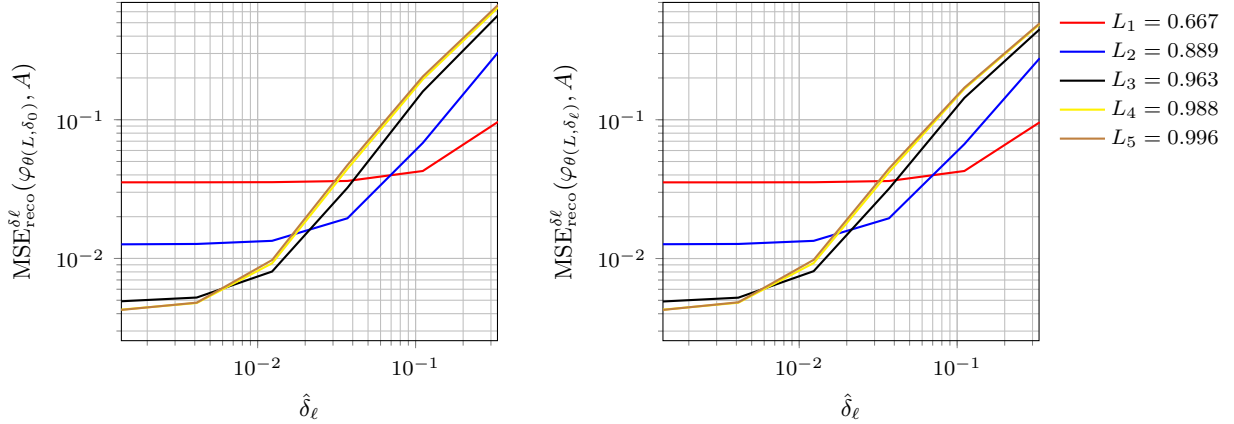
\begin{figure}
\centering
\begin{tikzpicture}
\pgfplotsset{
every axis plot/.append style={line width=0.8pt},
xmode = log,
ymode = log,
log y ticks with fixed point/.style={
      yticklabel={
        \pgfkeys{/pgf/fpu=true}
        \pgfmathparse{exp(\tick)}%
        \pgfmathprintnumber[fixed relative, precision=3]{\pgfmathresult}
        \pgfkeys{/pgf/fpu=false}}    
}
}
\begin{groupplot}[
    group style={
        group name=my plots,
        group size=2 by 1,
        ylabels at=edge left,
        xlabels at=edge bottom,
        horizontal sep=2.2cm
    },
    ymode=log,
    scale only axis,
    footnotesize,
    width=5cm,
    height=4.5cm,
    tickpos=left,
    x tick label style={/pgf/number format/fixed,
    /pgf/number format/1000 sep = \thinspace},
    ytick align=outside,
    xtick align=outside,
    enlarge x limits=false,
    xmin=0,xmax=0.33,
    xlabel={$\hat{\delta}_\ell$},
    legend pos=outer north east,
    legend style={draw=none},
    legend cell align={left},
    grid = both,
    %no markers,
    xtick={0,0.0001,0.001,0.01,0.1,1},
    anchor = north,
    clip=true,
    cycle list name=color list
]
\nextgroupplot[ ylabel={$\mathrm{MSE}_\mathrm{reco}^{\delta\ell}(\varphi_{\theta(L,\delta_0)},A)$}, ymax = 0.7, ytick ={0.001,0.01,0.1,1}, yticklabels={$10^{-3}$,$ 10^{-2}$,$10^{-1}$}
 ]
\addplot table [x=noise, y=1, col sep=comma]{csv_files/MSEoverNoise_approx_reco_m=0_noise=True.csv};
\addplot table [x=noise, y=2, col sep=comma]{csv_files/MSEoverNoise_approx_reco_m=0_noise=True.csv};
\addplot table [x=noise, y=3, col sep=comma]{csv_files/MSEoverNoise_approx_reco_m=0_noise=True.csv};
\addplot table [x=noise, y=4, col sep=comma]{csv_files/MSEoverNoise_approx_reco_m=0_noise=True.csv};
\addplot table [x=noise, y=5, col sep=comma]{csv_files/MSEoverNoise_approx_reco_m=0_noise=True.csv};

 \nextgroupplot[ ylabel={$\mathrm{MSE}_\mathrm{reco}^{\delta\ell}(\varphi_{\theta(L,\delta_\ell)},A)$}, ymax = 0.7, ytick ={0.001,0.01,0.1,1}, yticklabels={$10^{-3}$,$ 10^{-2}$,$10^{-1}$}
 ]
\addplot table [x=noise, y=1, col sep=comma]{csv_files/MSEoverNoise_approx_reco_noise=True.csv};
\addlegendentry{$L_1 = 0.667$}
\addplot table [x=noise, y=2, col sep=comma]{csv_files/MSEoverNoise_approx_reco_noise=True.csv};
\addlegendentry{$L_2 = 0.889$}
\addplot table [x=noise, y=3, col sep=comma]{csv_files/MSEoverNoise_approx_reco_noise=True.csv};
\addlegendentry{$L_3 = 0.963$}
\addplot table [x=noise, y=4, col sep=comma]{csv_files/MSEoverNoise_approx_reco_noise=True.csv};
\addlegendentry{$L_4 = 0.988$}
\addplot table [x=noise, y=5, col sep=comma]{csv_files/MSEoverNoise_approx_reco_noise=True.csv};
\addlegendentry{$L_5 = 0.996$}
\end{groupplot}
\end{tikzpicture}
\caption{Reconstruction errors $\mathrm{MSE}_\mathrm{reco}^{\delta_\ell}(\varphi_{\theta(L,\delta_0)},A)$ with training on noise-free samples and reconstruction from noisy samples \textit{(left)} and $\mathrm{MSE}_\mathrm{reco}^{\delta_\ell}(\varphi_{\theta(L,\delta_\ell)},A)$ with training and reconstruction on the same noise level $\delta_\ell$~\textit{(right)} for $\varphi_{\theta(L_m,\delta_\ell)}$ over different noise levels $\delta_\ell=\hat{\delta}_\ell\cdot \mathrm{std}_\mathrm{MNIST}$, $\ell=0,\hdots,6$, and Lipschitz bounds $L_m$, $m=1,\hdots,5$.
%Reconstruction error as in~\eqref{eq:reco_error} for $\varphi_{\theta(L_m)}$ over different noise levels $\hat{\delta}_\ell$, $\ell=0,\hdots,6$, and Lipschitz bounds $L_m$, $m=1,\hdots,5$.
}
\label{fig:MSE_reco}
\end{figure}

\section{Discussion and Outlook}
\label{sec:discussion}

% Summary
In the present work, we developed and investigated the regularization theory for the proposed iResNet reconstruction approach providing a learned method from data samples. The network's local approximation property is fundamental to delivering a convergent regularization scheme. It comprises approximation properties of the architecture and training, a definition of solution type, and a source condition. 
The approximation loss used for training is motivated by this property. 
In the most general version, the framework can be understood as a fully-learned end-to-end reconstruction scheme with minor limitations as it relies on the concatenation with $\origA ^\ast$, i.e., some a priori knowledge on the forward operator is at least included in a hand-crafted way in the reconstruction scheme.
%Moreover, fully-learned reconstructions as completely parameterized mappings from $Y$ to $X$ often suffer from requiring vast amounts of data training.
Introducing a diagonal architecture type relying on the SVD of the forward operator $\origA $ allowed for an analytical investigation of the resulting learned nonlinear spectral reconstruction method, which becomes a convergent regularization when fulfilling the local approximation property.
The analysis of trained shallow architectures revealed the link between the classical linear filter-based regularization theory and the concept of the local approximation property, and it illustrated the interplay between the approximation capability of the nonlinear network and the source condition. 
In addition, we validated and extended the theoretical findings by a series of numerical experiments on the MNIST data set and provided further insights into the learned nonlinear spectral regularization, such as similarity to the analytically determined linear regularization (squared soft TSVD) and data-dependency of the learned regularization.

% learned linear spectral regualrization (Kabri). As a special case of iResNet open question.
Our iResNet method using the diagonal architecture can be seen as a generalization of the learned linear spectral regularization considered in \cite{kabri2022convergent}. Using a different loss, which measures the reconstruction error, the authors of \cite{kabri2022convergent} obtain learned filter functions corresponding to Tikhonov regularization with data- and noise-dependent regularization parameters for each singular vector. An extension of the iResNet approach to this kind of training loss is thus desirable for future comparison.

The present work also serves as a starting point for various future research directions:

% Future: link to learned optimization
As we obtain the inverse of the residual network via a fixed point iteration, findings in the context of the iResNet reconstruction may be related to learned optimization methods~\cite{banert2020data}, maximally monotone operators~\cite{pesquet2021learning}, or potentially to plug-and-play methods~\cite{ebner2022plug}. 
Investigations at the interface to these methods are far beyond the present work and will remain future work.

% Future: Link local convergence guarantees, approximation and adaptation of definition of a regualrization.  Interplay with universal approxiamtion properties of neural networks.% Aspri: link to local approx property?
The concept of a local criterion to guarantee convergence of the iResNet reconstruction method provides further opportunities to investigate the regularization properties of learned reconstruction schemes. 
In a different context, the authors of \cite{aspri2020data} also consider a local property. They consider a learned projection onto subspaces concatenated with the forward operator, i.e., they consider a projected linear problem and combine it with Tikhonov regularization. There, the authors guarantee convergence of the regularization method by the Tikhonov theory, including a different but also localized assumption. % Thm 23 + 25 in that work
The findings in the present work and the literature thus reveal the necessity to explicitly take the approximation capabilities of the learned schemes and the training procedure into account. 
Immediate future research includes a comprehensive numerical study on how architectural choices such as particular activation functions, e.g., being strictly monotone but nonlinear, influence the desired regularization properties and reconstruction quality. 
Further potential future investigations in this context are twofold. On the one hand, one may exploit universal approximation properties of the underlying architectures to guarantee convergence for a sufficiently large subset of $X$, e.g., by verifying the local approximation property. On the other hand, one may adapt the definition of regularization to the approximate and data-based nature of the learning-based setting. % Evtl. Z.B. convergence Begriff aufweichen: Convergence to an element which is nearby

%Future own field: open questions: relaxing the diagonal assumption, limitations of approximation training, learning from samples vs. distribution -_> Bayesian view, alternative training loss and its influence on local approxiamtion property, 
Besides the more general open research questions at the interface to other methods and data-based concepts, the iResNet reconstruction approach itself provides various potential future works. 
The observed similarity of the learned nonlinear filter functions in the numerical experiments to the analytical solution of the affine network with bias (Section~\ref{sec:soft_TSVD}) immediately raises the question: How much does the data's structure influence the resulting reconstruction scheme? Phrased differently, one needs to test the limits of the proposed approximation training. 
Here, it would be desirable to change the perspective to a Bayesian one considering data and noise distributions and further investigate the training outcome, including additional variations of the training loss and its influence on the local approximation property.
Natural extension and generalizations, such as including learned solution types, e.g., by combining our approach with null space networks \cite{schwab2019deep} or relaxing the diagonal assumption of the architecture, remain future work.

In summary, the present first work on learned iResNet regularization schemes builds the basis for various future works of theoretical and numerical nature.

%{\color{blue} 
%Einordnung des Ansatzes noch zu diskutireen: end-to-end vs. learned postprocessing vs. learned iteration?

%Lessons learned so far (noch in Fließtext übersetzen)
%\begin{itemize}
%    \item classical approaches (purely linear networks) fit into the established framework for iResNet spectral regualrizations via diagonal architectures
%    \item kind of discretization of the reconstruction scheme (resp. residual network) can ahve a severe influence on the regularization properties in terms of convergence (e.g., ReLU network). The network is thus not able to approximate the forward operator sufficiently well in the limit on subsets of X. 
%    \item Approxiamtion capabilities in nonlinear architecture need to be controlled to obtain desired regularization properties. One needs a design which might be able to approxiamte a linear mapping sufficiently well. Here it was controlled via the soft thresholding paramter but also a controll via network width and depth would be desirbale which might be linkes to the universal approxiamtion theorem.   
%\end{itemize}
%
%Open questions until now:
%\begin{itemize}
%    \item no influence of noise.
%    \item Influence of the structure of the datatset
%\end{itemize}
%
%
%}

%{\color{blue} 
%\begin{itemize}
%    \item Extend residual network to act on the kernel (nullspace networ)
%    \item Some aspects which we analyze in Part 2
%\end{itemize}
%}

\section*{Acknowledgments}
A. Denker, N. Heilenk\"otter, and M. Iske acknowledge the support by the Deutsche Forschungsgemeinschaft (DFG, German
Research Foundation) - Project number 281474342/GRK2224/2. T. Kluth acknowledges support from the DELETO project funded by the Federal Ministry of Education and Research (BMBF, project number 05M20LBB).

%\section{List of Symbols}

%(only for us, can be deleted later)

%\begin{tabular}{c c l}
%$X$ & -- & domain of ground truth, Hilbert space\\
%$Y$ & -- & domain of observed data, Hilbert space\\
%\origA $ & -- & linear forward operator ($\origA  \colon X \to Y$)\\
%$A$ & -- & symmetric forward operator ($A = \origA ^* \origA $)\\
%$\varphi_\theta$ & -- & neural network with residual architecture: $\varphi_\theta(x) = x - f_\theta(x)$\\
%$f_\theta$ &--& residual function, e.g. $f_\theta(x) = \phi(W x + b)$ \\
%$W, b, \phi$ & -- & weight matrix, bias and (nonlinear) activation function in an NN-layer \\
%$L$ & -- & constraint for Lipschitz constant of $f_\theta$ (between $0$ and $1$)\\
%$U, V, \Sigma$ & -- & Singular Value Decomposition: $\origA  = U \Sigma V^*$\\
%$u_j, v_j, \sigma_j$ & -- & Singular Value Decomposition: $\origA  v_j = \sigma_j u_j$\\
%$\mathcal{N}(A)$ & -- & Nullspace or Kernel of $A$\\
%$\mathcal{R}(A)$ & -- & Range of $A$
%\end{tabular}

% ------------------------
\appendix
% ------------------------

\section{Proofs of Section \ref{sec:specialization}}

\subsection{Proof of Lemma \ref{lem:one_parameter_network}}
\label{sec:proof_of_one_parameter_network}

Equivalent filter function to the one-parameter-network.
\begin{proof}
    At first, we observe that the Lipschitz constraint in \eqref{eq:training_minproblem} is fulfilled for $|k| \leqslant  L$. Thus, \eqref{eq:training_minproblem} is equivalent to
    \begin{align}
     && &\min_{|k| \leqslant  L} \sum_{i} \|\varphi_\theta(x^{(i)}) - A x^{(i)} \|^2 \notag \\
     &\Leftrightarrow& &\min_{|k| \leqslant  L} \sum_{i} \|x^{(i)} - k x^{(i)} + k A x^{(i)} - A x^{(i)} \|^2 \notag  \\
     &\Leftrightarrow& &\min_{|k| \leqslant  L} \sum_{i} (1-k)^2 \|x^{(i)} - A x^{(i)} \|^2.
\end{align}
Since there is one $x^{(i)}$ s.t.\ $Ax_i \neq x^{(i)}$ the solution is obviously $k = L$ and we have $\varphi_\theta (x) = x - L (x - Ax)$.

Now we use the SVD of $\origA $ to solve %$\varphi_\theta( \bar{x} ) = \origA ^* y$
$\varphi_\theta( \bar{x} ) = z$ for $z \in \mathcal{R}(A)$ component wise. For all $j \in \mathbb{N}$ it holds
\begin{align}
   %&& \varphi_\theta( \bar{x} ) &= \origA ^* y\\
   %&\Leftrightarrow& \bar{x} - L (\bar{x} - A \bar{x}) &= \origA ^* y\\
   && \langle \bar{x} - L (\bar{x} - A \bar{x}), v_j \rangle &= \langle z, v_j \rangle \notag \\
   &\Leftrightarrow& (1-L) \langle \bar{x}, v_j \rangle + L \langle \bar{x}, A v_j \rangle &= \langle z, v_j \rangle \notag  \\
   &\Leftrightarrow& (1-L + L \sigma_j^2) \langle \bar{x}, v_j \rangle &= \langle z, v_j \rangle \notag \\
   &\Leftrightarrow&  \langle \bar{x}, v_j \rangle &= \frac{1}{1-L + L \sigma_j^2} \langle z, v_j \rangle
\end{align}
Thus, the filter function
\begin{equation}
    r_L(\sigma^2, s) = \frac{1}{1-L + L \sigma^2}  = \frac{1}{L} \cdot \frac{1}{\alpha + \sigma^2}, \qquad \alpha = \frac{1-L}{L}
\end{equation}
fulfills \eqref{eq:data_dep_filterfunc}.
\end{proof}

\subsection{Proof of Lemma \ref{lem:squared_soft_TSVD}}
\label{sec:proof_of_squared_soft_TSVD}

Equivalent filter function to the linear network.
\begin{proof} 
    At first, we observe that the Lipschitz constraint in \eqref{eq:training_minproblem} is fulfilled if the eigenvalues $(w_j)_j$ of $W$ are restricted by $|w_j| \leqslant  L$. Defining $\Theta_L  = \left\lbrace (W, b) \in \Theta \, \big| \, |w_j| \leqslant L\right\rbrace$, \eqref{eq:training_minproblem} is equivalent to
    \begin{align}
     && &\min_{\theta \in \Theta_L} \, \sum_{i=1}^N \|\varphi_\theta(x^{(i)}) - A x^{(i)} \|^2\notag \\
     &\Leftrightarrow& &\min_{(W,b) \in \Theta_L} \, \sum_{i=1}^N \left\|x^{(i)} - Wx_i - b - A x^{(i)}  \right\|^2 \notag\\
     &\Leftrightarrow& &\min_{(W,b) \in \Theta_L} \, \sum_{i=1}^N \|  P_{\mathcal{N}(A)}(x^{(i)}) \|^2 + \left\| P_{\mathcal{N}(A)^\bot}(x^{(i)}) - Wx_i - b - A x^{(i)}  \right\|^2.
\end{align}
The part with $P_{\mathcal{N}(A)}(x^{(i)}) $ is independent of $W$ and $b$ and therefore not relevant. Furthermore, it holds
\begin{align}
    P_{\mathcal{N}(A)^\bot}(x^{(i)}) - Wx_i - b - A x^{(i)} &= \sum_{j \in \N} \langle x, v_j \rangle v_j - \sum_{j \in \N} \left( w_j \langle x, v_j \rangle + b_j \right) v_j - \sum_{j \in \N} \sigma_j^2 \langle x, v_j \rangle v_j \notag \\
    &= \sum_{j \in \N} \left( (1 - w_j - \sigma_j^2) \langle x, v_j\rangle -b_j \right) v_j.
\end{align}
Thus, the minimizing problem can be written as
\begin{equation}
     \min_{|w_j| \leqslant  L, \, (b_j) \in \ell^2} \, \sum_{i=1}^N  \sum_{j \in \mathbb{N}} \left( (1 - w_j - \sigma_j^2) \langle x, v_j\rangle -b_j \right)^2 .
\end{equation}

We can solve this problem for each $j$ separately. For determining $b_j$ we set the derivative to zero and get
\begin{align}
  & & 0 &\stackrel{!}{=} -2 \sum_{i=^1}^N \left( (1-w_j - \sigma_j^2) \langle x^{(i)}, v_j \rangle - b_j \right) \notag\\
    &\Leftrightarrow & b_j &\stackrel{!}{=} \frac{1}{N} (1-w_j - \sigma_j^2) \sum_{i=1}^N \langle x^{(i)}, v_j \rangle = (1-w_j - \sigma_j^2) \mu_j.
\end{align}
Since for every $i$, the sequence $(\langle x^{(i)}, v_j \rangle )_{j \in \N}$ is in $\ell^2$, $(b_j) \in \ell^2$ is also fulfilled.
It remains to solve
\begin{equation}
    \min_{|w_j| \leqslant  L,} \, \sum_{i=1}^N  \left( \left(1 - w_j - \sigma_j^2 \right) \left( \langle x, v_j\rangle - \mu_j \right) \right)^2.
\end{equation}
By assumption, for every $v_j$ there is at least one $x^{(i)}$ s.t.\ $\langle x^{(i)}, v_j \rangle \neq \mu_j$, thus, the problem can be simplified to
\begin{equation}
    \min_{|w_j| \leqslant  L} \, (1 - w_j - \sigma_j^2 )^2 \qquad \forall j \in \mathbb{N}.
\end{equation}
The obvious solution is $w_j = \min \{1- \sigma_j^2, L \}$, since $1-\sigma_j^2$ is by assumption always positive.

Now we use the SVD of $\origA $ to solve $\varphi_\theta( \bar{x} ) = z$ for some $z \in \mathcal{R}(A)$ component-wise.
%\begin{equation}
%    \bar{x} - \sum_{j \in \mathbb{N}} \left( w_j \langle \bar{x}, v_j \rangle + b_j \right) v_j = \sum_{j \in \mathbb{N}} \langle z, v_j \rangle v_j,
%\end{equation}
%respectively, componentwise.
For all $j \in \mathbb{N}$ it holds
\begin{align}
   && \langle \bar{x}, v_j \rangle - w_j \langle \bar{x}, v_j \rangle - b_j &= \langle z, v_j \rangle \notag\\
   &\Leftrightarrow& (1 - w_j) \langle \bar{x}, v_j \rangle &= \langle z, v_j \rangle  + b_j \notag\\
   &\Leftrightarrow& \langle \bar{x}, v_j \rangle &= \frac{1}{1 - w_j}\langle z, v_j \rangle + \frac{b_j}{1 - w_j} \notag\\
   &\Leftrightarrow& \langle \bar{x}, v_j \rangle &= \frac{1}{\max \{ \sigma_j^2, 1-L\} }\langle z, v_j \rangle + \frac{\max \{0, 1-L-\sigma_j^2\}}{\max \{ \sigma_j^2, 1-L\}} \mu_j.
\end{align}
Thus, using the filter function and bias
    \begin{equation}
        \hat{r}_L(\sigma^2) = \frac{1}{\max \{\sigma^2, 1-L \}}, \qquad \hat{b}_L = \sum_{\sigma_j^2 < 1-L} \frac{1-L-\sigma^2}{1-L} \mu_j v_j,
    \end{equation}
    the reconstruction schemes $\varphi_\theta^{-1} \circ \origA^*$ and $T_L$ from \eqref{eq:linear_filter_with_bias} are equivalent.
\end{proof}

\subsection{Proof of Lemma \ref{lem:linear_with_relu}}
\label{sec:proof_of_linear_with_relu}

Equivalent filter function to the ReLU-network.

\begin{proof}
    At first, we observe that the Lipschitz constraint in \eqref{eq:training_minproblem} is fulfilled if the eigenvalues $(w_j)_j$ of $W$ are restricted by $|w_j| \leqslant  L$. Thus, \eqref{eq:training_minproblem} is equivalent to
    \begin{align}
     && &\min_{(w_j), \,|w_j| \leqslant  L} \, \sum_{i} \|\varphi_\theta(x^{(i)}) - A x^{(i)} \|^2 \notag \\
     &\Leftrightarrow& &\min_{(w_j), \, |w_j| \leqslant  L} \, \sum_{i} \left\|x^{(i)} - \phi(Wx_i) - A x^{(i)}  \right\|^2 \notag \\
     &\Leftrightarrow& &\min_{(w_j), \, |w_j| \leqslant  L} \, \sum_{i} \left\| P_{\mathcal{N}(A)}(x^{(i)}) + \sum_{j \in \mathbb{N}} \langle x^{(i)}, v_j \rangle v_j - \sum_{j \in \mathbb{N}} \max\{0, w_j \langle x^{(i)}, v_j \rangle\} v_j - \sum_{j \in \mathbb{N}} \sigma_j^2 \langle x^{(i)}, v_j \rangle v_j  \right\|^2 \notag \\
     &\Leftrightarrow& &\min_{(w_j), \, |w_j| \leqslant  L} \, \sum_{i} \left\| P_{\mathcal{N}(A)}(x^{(i)}) \right\|^2 + \left\|\sum_{j \in \mathbb{N}} \left((1  - \sigma_j^2 )\langle x^{(i)}, v_j \rangle - \max\{0, w_j \langle x^{(i)}, v_j \rangle\} \right)v_j  \right\|^2 \notag\\
     &\Leftrightarrow& &\min_{(w_j), \, |w_j| \leqslant  L} \, \sum_{i}  \sum_{j \in \mathbb{N}} \left((1  - \sigma_j^2 )\langle x^{(i)}, v_j \rangle - \max\{0, w_j \langle x^{(i)}, v_j \rangle\} \right)^2 .
\end{align}
At first we focus on the components $\langle x^{(i)}, v_j \rangle$ which are negative or zero. They do not influence the solution of the problem since
\begin{equation}
    \min_{|w_j| \leqslant  L} \, \left( -(1 - \sigma_j^2) - \max\{0, -w_j\} \right)^2 \qquad \forall j \in \mathbb{N}
\end{equation}
is solved by any $w_j \in [0, L]$, as $1-\sigma_j^2$ is by assumption always positive. Thus, it suffices to consider the cases $\langle x^{(i)}, v_j \rangle > 0$, which lead to
%The problem can be solved for each $j \in \N$ seperately. For those values of $j$ where $\langle x^{(i)}, v_i \rangle > 0$ holds, we have
%Since for every $v_j$ there is at least one $x^{(i)}$ s.t. $\langle x^{(i)}, v_j \rangle \neq 0$ the problem can be simplified to
\begin{equation}
    \min_{|w_j| \leqslant  L} \, (1 - \sigma_j^2 - \max\{0, w_j\} )^2 \qquad \forall j \in \mathbb{N}.
\end{equation}
The obvious solution is $w_j = \min \{1- \sigma_j^2, L \}$.

Now we use the SVD of $\origA $ to solve $\varphi_\theta( \bar{x} ) = z$ for $z \in \mathcal{R}(A)$ or
\begin{equation}
    \bar{x} - \sum_{j \in \mathbb{N}} \max \{0, w_j \langle \bar{x}, v_j \rangle \}v_j = \sum_{j \in \mathbb{N}} \langle z, v_j \rangle v_j,
\end{equation}
respectively, component-wisely.
For all $j \in \mathbb{N}$ it holds
\begin{align}
   && \langle \bar{x}, v_j \rangle -\max \{0, w_j \langle \bar{x}, v_j \rangle \} &= \langle z, v_j \rangle \notag \\
   &\Leftrightarrow& \min \{ \langle \bar{x}, v_j \rangle, (1-w_j)  \langle \bar{x}, v_j \rangle \}&=  \langle z, v_j \rangle.
\end{align}
Since $1-w_j>0$, the signs of $ \langle \bar{x}, v_j \rangle$ and $\langle z, v_j \rangle$ must coincide. In case of positive signs, the minimum on the left hand side takes the value $ (1-w_j)  \langle \bar{x}, v_j \rangle$, in case of negative signs, it is $\langle \bar{x}, v_j \rangle$. Thus,  we get
\begin{equation}
    \langle  \bar{x}, v_j \rangle  =
    \begin{cases}
        \frac{1}{1 - w_j} \langle z, v_j \rangle  & \text{if } \langle z, v_j \rangle \geqslant  0, \\
        \langle z, v_j \rangle & \text{if } \langle z, v_j \rangle < 0.
    \end{cases}
\end{equation}
Using $1-w_j = \max\{\sigma_j^2, 1-L\}$, it can be seen that
\begin{equation}
    r_L(\sigma^2, s) = 
    \begin{cases}
        \frac{1}{\max\{\sigma^2, 1-L\}} & \text{if } s \geqslant  0,\\
        1 & \text{if } s < 0
    \end{cases}
\end{equation}
fulfills \eqref{eq:data_dep_filterfunc}.
\end{proof}

\subsection{Proof of Lemma~\ref{lem:arch_soft_thresh}}
\label{app:lem:arch_soft_thresh}

Equivalent filter function to the soft-thresholding-network.
\begin{proof}
    At first, we observe that the Lipschitz constraint in \eqref{eq:training_minproblem} is fulfilled if the eigenvalues $(w_j)_j$ of $W$ are restricted by $|w_j| \leqslant  L$. 
    Besides, $f_\theta(x)$ can be written as
    \begin{align}
        \phi_\alpha(Wx) &= \sum_{j\in \N} \mathrm{sign}(\langle W x, v_j \rangle) \max(0, |\langle Wx, v_j \rangle | - \alpha_j ) v_j \notag\\
        &= \sum_{j\in \N} \mathrm{sign}(w_j \langle x, v_j \rangle) \max(0, |w_j \langle x, v_j \rangle | - \alpha_j ) v_j.
    \end{align}
    %For any $x^{(i)} \in X$, it follows
    For any $x^{(i)} $ from the training data set, it follows
    \begin{align}
        & \quad \, \, x^{(i)} - \phi_\alpha(Wx^{(i)}) - A x^{(i)}  \notag\\
        &= P_{\mathcal{N}(A)}(x^{(i)}) + \sum_{j \in \mathbb{N}} \langle x^{(i)}, v_j \rangle v_j - \sum_{j\in \N} \mathrm{sign}(w_j \langle x^{(i)}, v_j \rangle) \max(0, |w_j \langle x^{(i)}, v_j \rangle | - \alpha_j ) v_j - \sum_{j \in \mathbb{N}} \sigma_j^2 \langle x^{(i)}, v_j \rangle v_j \notag\\
        %&= P_{\mathcal{N}(A)}(x^{(i)}) + \sum_{j \in \mathbb{N}} \left( \langle x^{(i)}, v_j \rangle - \mathrm{sign}(w_j \langle x^{(i)}, v_j \rangle) \max(0, |w_j \langle x^{(i)}, v_j \rangle | - \alpha_j ) -  \sigma_j^2 \langle x^{(i)}, v_j \rangle \right) v_j 
       % &= P_{\mathcal{N}(A)}(x^{(i)}) + \left( p_i - \mathrm{sign}(w_i p_i) \max(0, |w_i p_i | - \alpha_j ) -  \sigma_j^2 p_i \right) v_i 
        &= P_{\mathcal{N}(A)}(x^{(i)}) + \sum_{j \in \mathbb{N}} \left[ x_j^{(i)}  -  \mathrm{sign}(w_j x_j^{(i)}) \max(0, |w_j x_j^{(i)} | - \alpha_j ) -  \sigma_j^2 x_j^{(i)} \right] v_j 
    \end{align}
    where $x_j^{(i)}:=\langle x^{(i)},v_j\rangle$.
    %Defining $g_j(p) = p - \mathrm{sign}(w_j p) \max(0, |w_j p | - \alpha_j ) -  \sigma_j^2p$, \eqref{eq:training_minproblem} is equivalent to
%\begin{align}
%     && &\min_{(w_j), \,|w_j| \leqslant  L} \, \sum_{i} \|\varphi_\theta(x^{(i)}) - A x^{(i)} \|^2 \\
%     &\Leftrightarrow& &\min_{(w_j), \, |w_j| \leqslant  L} \, \sum_{i} \left\|x^{(i)} - \phi_\alpha(Wx^{(i)}) - A x^{(i)}  \right\|^2\\
%     &\Leftrightarrow& &\min_{(w_j), \, |w_j| \leqslant  L} \, \sum_{i} \left\| P_{\mathcal{N}(A)}(x^{(i)}) \right\|^2 + \sum_{j \in \mathbb{N}} g_j(\langle x^{(i)}, v_j \rangle)^2 .
%\end{align}
Therefore,  \eqref{eq:training_minproblem} is equivalent to
    \begin{align}
     && &\min_{(w_j), \,|w_j| \leqslant  L} \, \sum_{i} \|\varphi_\theta(x^{(i)}) - A x^{(i)} \|^2 \notag \\
     &\Leftrightarrow& &\min_{(w_j), \, |w_j| \leqslant  L} \, \sum_{i} \left\|x^{(i)} - \phi_\alpha(Wx^{(i)}) - A x^{(i)}  \right\|^2 \notag \\
     &\Leftrightarrow& &\min_{(w_j), \, |w_j| \leqslant  L} \, \sum_{j}  \sum_{i}  \left( x_j^{(i)}  -  \mathrm{sign}(w_j x_j^{(i)}) \max(0, |w_j x_j^{(i)} | - \alpha_j ) -  \sigma_j^2 x_j^{(i)} \right)^2.
\end{align}
%Note that for each $j \in \mathbb{N}$, we can separately consider the problem
%\begin{align}
%    && &\min_{|w_j| \leqslant  L}  \sum_{i} g_j(\langle x^{(i)}, v_j \rangle)^2 \\
%    &\Leftrightarrow& &\min_{|w_j| \leqslant  L} \sum_{i}  \left( \langle x^{(i)}, v_j \rangle - \mathrm{sign}(w_j \langle x^{(i)}, v_j \rangle) \max(0, |w_j \langle x^{(i)}, v_j \rangle | - \alpha_j ) -  \sigma_j^2 \langle x^{(i)}, v_j \rangle \right)^2.
%\end{align}
%For simplicity, we assume $\langle x^{(i)}, v_j \rangle = p_j \neq 0$ (independent of $i$) at first. In this case, we just have to find the solution of
For each $j$, we have to find the solution of
\begin{align}
    && &\min_{|w_j| \leqslant  L}  \sum_{i}  \left( x_j^{(i)}  -  \mathrm{sign}(w_j x_j^{(i)}) \max(0, |w_j x_j^{(i)} | - \alpha_j ) -  \sigma_j^2 x_j^{(i)} \right)^2 \notag \\
    &\Leftrightarrow& &\min_{|w_j| \leqslant  L} \sum_{i} \left( (1- \sigma_j^2) \mathrm{sign}(x_j^{(i)}) |x_j^{(i)}| - \mathrm{sign}(w_j) \mathrm{sign}(x_j^{(i)}) \max(0, |w_j x_j^{(i)} | - \alpha_j )\right)^2 \notag \\
    &\Leftrightarrow& &\min_{|w_j| \leqslant  L} \sum_{i} \left( (1- \sigma_j^2) |x_j^{(i)}| - \mathrm{sign}(w_j) \max(0, |w_j x_j^{(i)} | - \alpha_j )\right)^2. \label{eq:soft_thr_auxil1}
\end{align}
Note that for $\sigma_j^2 = 1$, any $w_j \in \left[ -\frac{\alpha_j}{\max^{(i)}|x_j^{(i)}|}, \frac{\alpha_j}{\max_i|x_j^{(i)}|} \right]$ solves the problem and hence the solution is not unique. In all other cases, we have $1 - \sigma_j^2 > 0$, thus, only a positive sign of $w_j$ makes sense and we restrict ourselves to this case.
%If $p_j \leqslant  \frac{\alpha}{L}$, the term $\max(0, |w_j p_j | - \alpha_j )$ is always zero, due to the constraint $|w_j| \leqslant  L$. In the other case, $p_j > \frac{\alpha}{L}$, we get
Due to $w_j \geqslant  0$, we have
\begin{equation}
    \quad \, \, \left( (1- \sigma_j^2) |x_j^{(i)}| -  \max(0, |w_j x_j^{(i)} | - \alpha_j )\right)^2 
    = 
    \begin{cases}
        \left((1- \sigma^2) x_j^{(i)}\right)^2 & \text{if } |w_j x_j^{(i)}| \leqslant  \alpha_j, \\
        \left((1- \sigma_j^2 - w_j) |x_j^{(i)}|  + \alpha_j \right)^2 & \text{if } |w_j x_j^{(i)}| > \alpha_j.
    \end{cases}
\end{equation}
As the first case is an upper bound of the second case, it is highly desirable to choose $w_j$ large enough such that all training samples are contained in the second case. But we also need to take into account the upper bound $w_j\leqslant L$. 
Due to the data set assumption (ii), the minimization problem \eqref{eq:soft_thr_auxil1} becomes
\begin{equation}
    \min_{w_j \in [0, L]} \sum_{i\in I_j(L)} \left( (1- \sigma_j^2 - w_j) |x_j^{(i)}| +\alpha_j \right)^2.
\end{equation}

The minimizer is
\begin{equation} \label{eq:wj_soft_thresholding}
    w_j = 
    \begin{cases}
        \frac{\alpha_j}{p_j} + 1 - \sigma_j^2 & \text{if } \frac{\alpha_j}{p_j} + 1 - \sigma_j^2 \leqslant  L, \\
        L & \text{else},
    \end{cases}
\end{equation}
where $p_{L,j}=p_{j} = \frac{\sum_{i\in I_j(L)} |x_j^{(i)}|^2 }{\sum_{i\in I_j(L)} |x_j^{(i)}| }$.
Equivalently, we can write $w_j = \min\{\frac{\alpha_j}{p_j} + 1 - \sigma_j^2, L \}$. 
%Note that for $\sigma_j^2 = 1$, we get $w_j = \frac{\alpha}{p_j}$, which is a valid choice.
%or $1 - w_j = \max\{\sigma_j^2 - \frac{\alpha}{|p_j|}, 1-L \}$.

%\textbf{Now generalize to $p_{i,j} = \langle x^{(i)}, v_j \rangle$ or simplify the assumptions of the Lemma to $\{x^{(i)}\}_{i \in \mathbb{N}}$, $x^{(i)} = p_i v_i$.}

Now we use the SVD of $\origA $ to solve $\varphi_\theta( \bar{x} ) = z$ for $z \in \mathcal{R}(A)$ or
\begin{equation}
    \bar{x} - \sum_{j\in \N} \mathrm{sign}(w_j \langle \bar{x}, v_j \rangle) \max(0, |w_j \langle \bar{x}, v_j \rangle | - \alpha_j ) v_j = \sum_{j \in \N} \langle z, v_j \rangle v_j,
\end{equation}
respectively, componentwise.
For all $j \in \mathbb{N}$ it holds
\begin{align}
   && \langle \bar{x}, v_j \rangle - \mathrm{sign}(w_j \langle \bar{x}, v_j \rangle) \max(0, |w_j \langle \bar{x}, v_j \rangle | - \alpha_j ) &= \langle z, v_j \rangle \notag \\
   &\Leftrightarrow&  \mathrm{sign}(\langle \bar{x}, v_j \rangle) \left( |\langle \bar{x}, v_j \rangle| - \max(0, w_j| \langle \bar{x}, v_j \rangle | - \alpha_j ) \right) &= \langle  z, v_j \rangle.
\end{align}
Thus, the sign of $\langle \bar{x}, v_j \rangle$ is the same as the one of $\langle z, v_j \rangle$. Assuming $w_j| \langle \bar{x}, v_j \rangle | \leqslant  \alpha_j$, we obtain
\begin{equation}
    |\langle \bar{x}, v_j \rangle| =  |\langle z, v_j \rangle| \qquad \text{if } |\langle z, v_j \rangle| \leqslant  \frac{\alpha_j}{w_j}
\end{equation}
and assuming $w_j| \langle \bar{x}, v_j \rangle | > \alpha_j$, we obtain
%\begin{equation}
%    |\langle \bar{x}, v_j \rangle| - w_j |\langle \bar{x}, v_j \rangle| + \alpha_j = \sigma_j |\langle y, u_j \rangle|
%\end{equation}
\begin{equation}
    |\langle \bar{x}, v_j \rangle| = \frac{ |\langle z, v_j \rangle| - \alpha_j}{1-w_j} \qquad \text{if }  \frac{ |\langle z, v_j \rangle| }{1-w_j} - \frac{\alpha_j}{1-w_j} > \frac{\alpha_j}{w_j}.
\end{equation}
In total, this leads to
\begin{equation}
    \langle \bar{x}, v_j \rangle = 
    \begin{cases}
        \langle z, v_j \rangle & \text{if } |\langle z, v_j \rangle| \leqslant  \frac{\alpha_j}{w_j},\\
        \mathrm{sign}(\langle z, v_j \rangle)\frac{|\langle z, v_j \rangle| - \alpha_j}{1-w_j} & \text{if } |\langle z, v_j \rangle| > \frac{\alpha_j}{w_j}.
    \end{cases}
\end{equation}
%To write this in the form of \eqref{eq:data_dep_filterfunc} and 
Using $1 - w_j = \max\{\sigma_j^2 - \frac{\alpha_j}{p_j}, 1-L \}$, we see that the filter function
\begin{equation}
    r_L(\sigma_j^2, s) = 
    \begin{cases}
        %1 & \text{if } |s| \leqslant  \frac{\alpha}{w_j},\\
       %\frac{1}{1-w_j} \frac{|s| - \alpha}{|s|} & \text{if } |s| > \frac{\alpha}{ w_j}
       1 & \text{if } |s| \leqslant  \frac{\alpha_j}{w_j},\\
       \frac{1}{ \max\{\sigma_j^2 - \frac{\alpha_j}{p_j}, 1-L \}} \frac{|s| - \alpha_j}{|s|} & \text{if } |s| > \frac{\alpha_j}{ w_j}
    \end{cases}
\end{equation}
fulfills \eqref{eq:data_dep_filterfunc}. $F_L$ is then derived by exploiting \eqref{eq:data_dep_filterfunc_original_problem}.
\end{proof}

\subsection{Computation of filter function \eqref{eq:SFS_plt_func}}
\label{sec:computation_nonlin_filterfunc}

Assuming continuous extensions $p_L(\sigma^2)$ s.t.\ $p_L(\sigma_j^2) = p_{L,j}$, $w_L(\sigma^2)$ s.t.\ $w_L(\sigma_j^2) = w_j$, and $\alpha(\sigma^2)$ s.t.\ $\alpha(\sigma_j^2)=\alpha_j$, we can write the filter function from Lemma \ref{lem:arch_soft_thresh} as
\begin{equation}
    r_L(\sigma^2, s) = 
    \begin{cases}
    \frac{1}{ \max\{\sigma^2 - \frac{\alpha(\sigma^2)}{p_L(\sigma^2)}, 1-L \}} \frac{|s| - \alpha(\sigma^2)}{|s|} & \text{if } |s| > \frac{\alpha(\sigma^2)}{ w_L(\sigma^2)}, \\
    1 & \text{if } |s| \leqslant  \frac{\alpha(\sigma^2)}{w_L(\sigma^2)}.
    \end{cases}
\end{equation}
Besides, it holds $w_L(\sigma^2) = \min \left\lbrace \frac{\alpha(\sigma^2)}{p_L(\sigma^2)} + 1 - \sigma^2, L \right\rbrace \leqslant L$, also by Lemma \ref{lem:arch_soft_thresh}.
Now, we choose $|s| = p_L(\sigma^2) \sigma^2$ and distinguish between three different cases (small, medium and large values of $\sigma^2$).

At first, we consider the case $\sigma^2 \leqslant \frac{\alpha(\sigma^2)}{L p_L(\sigma^2)}$. Then, it holds
\begin{equation}
    |s| = p_L(\sigma^2) \sigma^2 \leqslant \frac{\alpha(\sigma^2)}{L} \leqslant \frac{\alpha(\sigma^2)}{w_L(\sigma^2)}.
\end{equation}
Thus, it follows
\begin{equation}
    r_L\left(\sigma^2, \pm \, p_L(\sigma^2) \sigma^2 \right) = 1.
\end{equation}

Secondly, we consider the case $\frac{\alpha(\sigma^2)}{L p_L(\sigma^2)} < \sigma^2 \leqslant \frac{\alpha(\sigma^2)}{p_L(\sigma^2)} + 1 - L$. We shortly make sure, that the lower bound is in fact smaller than the upper one. The assumptions of Lemma \ref{lem:arch_soft_thresh} imply that $p_L(\sigma^2) > \frac{\alpha(\sigma^2)}{L}$ holds. From this, we can follow
\begin{align}
    & & p_L(\sigma^2) (1-L) &> \frac{\alpha(\sigma^2)}{L} -  \frac{L\alpha(\sigma^2)}{L} \notag\\
    &\Rightarrow & \alpha(\sigma^2) + p_L(\sigma^2) (1-L) &> \frac{\alpha(\sigma^2)}{L} \notag\\
    &\Rightarrow & \frac{\alpha(\sigma^2)}{p_L(\sigma^2)} + 1 - L &> \frac{\alpha(\sigma^2)}{L p_L(\sigma^2)}.
\end{align}
Besides, note that $\sigma^2 \leqslant \frac{\alpha(\sigma^2)}{p_L(\sigma^2)} + 1 - L$ implies $w_L(\sigma^2) = L$. Thus, it holds
\begin{equation}
    |s| = p_L(\sigma^2) \sigma^2 > \frac{\alpha(\sigma^2)}{L} = \frac{\alpha(\sigma^2)}{w_L(\sigma^2)} .
\end{equation}
Accordingly, we obtain
\begin{align}
        r_L\left(\sigma^2, \pm \, p_L(\sigma^2) \sigma^2 \right) &= \frac{1}{ \max\{\sigma^2 - \frac{\alpha(\sigma^2)}{p_L(\sigma^2)}, 1-L \}} \frac{p_L(\sigma^2) \sigma^2 - \alpha(\sigma^2)}{p_L(\sigma^2) \sigma^2} \notag \\
        &= \frac{1}{ 1-L} \frac{p_L(\sigma^2) \sigma^2 - \alpha(\sigma^2)}{p_L(\sigma^2) \sigma^2} = \frac{1}{ 1-L} \left( 1 -\frac{\alpha(\sigma^2)}{p_L(\sigma^2) \sigma^2} \right).
\end{align}

At last, we consider the case $\sigma^2 > \frac{\alpha(\sigma^2)}{p_L(\sigma^2)} + 1 - L$. Note that this implies $w_L(\sigma^2) = \frac{\alpha(\sigma^2)}{p_L(\sigma^2)} + 1 - \sigma^2$. Besides it holds $\sigma^2 - \frac{\alpha(\sigma^2)}{p_L(\sigma^2)} > 0$, which implies
\begin{align}
    & & \left(\sigma^2 - \frac{\alpha(\sigma^2)}{p_L(\sigma^2)} \right) (1-\sigma^2) &> 0 \notag\\
    &\Rightarrow & \sigma^2 \left( \frac{\alpha(\sigma^2)}{p_L(\sigma^2)} + 1 - \sigma^2 \right) &> \frac{\alpha(\sigma^2)}{p_L(\sigma^2)} \notag\\
    &\Rightarrow & |s| = p_L(\sigma^2) \sigma^2 &> \frac{\alpha(\sigma^2)}{\frac{\alpha(\sigma^2)}{p_L(\sigma^2)} + 1 - \sigma^2 } = \frac{\alpha(\sigma^2)}{w_L(\sigma^2)}.
\end{align}
Accordingly, we obtain
\begin{align}
        r_L\left(\sigma^2, \pm \, p_L(\sigma^2) \sigma^2 \right) &= \frac{1}{ \max\{\sigma^2 - \frac{\alpha(\sigma^2)}{p_L(\sigma^2)}, 1-L \}} \frac{p_L(\sigma^2) \sigma^2 - \alpha(\sigma^2)}{p_L(\sigma^2) \sigma^2} \notag \\
        &= \frac{1}{\sigma^2 - \frac{\alpha(\sigma^2)}{p_L(\sigma^2)}} \frac{p_L(\sigma^2) \sigma^2 - \alpha(\sigma^2)}{p_L(\sigma^2) \sigma^2} \notag \\
        &= \frac{p_L(\sigma^2) \sigma^2 - \alpha(\sigma^2)}{ \left( p_L(\sigma^2)\sigma^2 -  \alpha(\sigma^2)\right) \sigma^2} = \frac{1}{\sigma^2}.
\end{align}

Thus, in total, it holds
\begin{equation}
    r_L\left(\sigma^2, \pm \, p_L(\sigma^2) \sigma^2 \right) = 
    \begin{cases}
        1 & \text{if } \, \sigma^2 \leqslant \frac{\alpha(\sigma^2)}{L p_L(\sigma^2)}, \\
        \frac{1}{ 1-L} \left( 1 -\frac{\alpha(\sigma^2)}{p_L(\sigma^2) \sigma^2} \right) & \text{if } \, \frac{\alpha(\sigma^2)}{L p_L(\sigma^2)} < \sigma^2 \leqslant \frac{\alpha(\sigma^2)}{p_L(\sigma^2)} + 1 - L, \\
        \frac{1}{\sigma^2} & \text{if } \, \sigma^2 > \frac{\alpha(\sigma^2)}{p_L(\sigma^2)} + 1 - L.
    \end{cases}
\end{equation}

\bibliographystyle{abbrv}
\bibliography{literature}

\begin{thebibliography}{10}

\bibitem{adler2018learned}
J.~Adler and O.~{\"O}ktem.
\newblock Learned primal-dual reconstruction.
\newblock {\em IEEE transactions on medical imaging}, 37(6):1322--1332, 2018.

\bibitem{alberti2021learning}
G.~S. Alberti, E.~De~Vito, M.~Lassas, L.~Ratti, and M.~Santacesaria.
\newblock Learning the optimal tikhonov regularizer for inverse problems.
\newblock {\em Advances in Neural Information Processing Systems},
  34:25205--25216, 2021.

\bibitem{arndt2022regularization}
C.~Arndt.
\newblock Regularization theory of the analytic deep prior approach.
\newblock {\em Inverse Problems}, 38(11):115005, 2022.

\bibitem{arridge2019solving}
S.~Arridge, P.~Maass, O.~{\"O}ktem, and C.-B. Sch{\"o}nlieb.
\newblock Solving inverse problems using data-driven models.
\newblock {\em Acta Numerica}, 28:1--174, 2019.

\bibitem{aspri2020data}
A.~Aspri, Y.~Korolev, and O.~Scherzer.
\newblock Data driven regularization by projection.
\newblock {\em Inverse Problems}, 36(12):125009, 2020.

\bibitem{banert2020data}
S.~Banert, A.~Ringh, J.~Adler, J.~Karlsson, and O.~Oktem.
\newblock Data-driven nonsmooth optimization.
\newblock {\em SIAM Journal on Optimization}, 30(1):102--131, 2020.

\bibitem{bauermeister2020learning}
H.~Bauermeister, M.~Burger, and M.~Moeller.
\newblock Learning spectral regularizations for linear inverse problems.
\newblock In {\em NeurIPS 2020 Workshop on Deep Learning and Inverse Problems}.

\bibitem{behrmann2019invertible}
J.~Behrmann, W.~Grathwohl, R.~T. Chen, D.~Duvenaud, and J.-H. Jacobsen.
\newblock Invertible residual networks.
\newblock In {\em International Conference on Machine Learning}, pages
  573--582. PMLR, 2019.

\bibitem{benning2018modern}
M.~Benning and M.~Burger.
\newblock Modern regularization methods for inverse problems.
\newblock {\em Acta Numerica}, 27:1--111, 2018.

\bibitem{bora2017compressed}
A.~Bora, A.~Jalal, E.~Price, and A.~G. Dimakis.
\newblock Compressed sensing using generative models.
\newblock In {\em International Conference on Machine Learning}, pages
  537--546. PMLR, 2017.

\bibitem{bungert2021clip}
L.~Bungert, R.~Raab, T.~Roith, L.~Schwinn, and D.~Tenbrinck.
\newblock Clip: Cheap lipschitz training of neural networks.
\newblock In {\em Scale Space and Variational Methods in Computer Vision: 8th
  International Conference, SSVM 2021, Virtual Event, May 16--20, 2021,
  Proceedings}, pages 307--319. Springer, 2021.

\bibitem{chung2011designing}
J.~Chung, M.~Chung, and D.~P. O'Leary.
\newblock Designing optimal spectral filters for inverse problems.
\newblock {\em SIAM Journal on Scientific Computing}, 33(6):3132--3152, 2011.

\bibitem{denker2021conditional}
A.~Denker, M.~Schmidt, J.~Leuschner, and P.~Maass.
\newblock Conditional invertible neural networks for medical imaging.
\newblock {\em Journal of Imaging}, 7(11):243, 2021.

\bibitem{dittmer2020}
S.~Dittmer, T.~Kluth, P.~Maass, and D.~Otero~Baguer.
\newblock {Regularization by Architecture: A Deep Prior Approach for Inverse
  Problems}.
\newblock {\em Journal of Mathematical Imaging and Vision}, 62:456--470, 2020.

\bibitem{ebner2023regularization}
A.~Ebner, J.~Frikel, D.~Lorenz, J.~Schwab, and M.~Haltmeier.
\newblock Regularization of inverse problems by filtered diagonal frame
  decomposition.
\newblock {\em Applied and Computational Harmonic Analysis}, 62:66--83, 2023.

\bibitem{ebner2022plug}
A.~Ebner and M.~Haltmeier.
\newblock Plug-and-play image reconstruction is a convergent regularization
  method.
\newblock {\em arXiv preprint arXiv:2212.06881}, 2022.

\bibitem{engl1996regularization}
H.~W. Engl, M.~Hanke, and A.~Neubauer.
\newblock {\em Regularization of inverse problems}, volume 375.
\newblock Springer Science \& Business Media, 1996.

\bibitem{gregor2010}
K.~Gregor and Y.~LeCun.
\newblock Learning fast approximations of sparse coding.
\newblock In {\em Proceedings of the 27th International Conference on
  International Conference on Machine Learning}, ICML'10, pages 399--–406,
  Madison, WI, USA, 2010. Omnipress.

\bibitem{hauptmann2018model}
A.~Hauptmann, F.~Lucka, M.~Betcke, N.~Huynh, J.~Adler, B.~Cox, P.~Beard,
  S.~Ourselin, and S.~Arridge.
\newblock Model-based learning for accelerated, limited-view 3-d photoacoustic
  tomography.
\newblock {\em IEEE transactions on medical imaging}, 37(6):1382--1393, 2018.

\bibitem{he2020radon}
J.~He, Y.~Wang, and J.~Ma.
\newblock Radon inversion via deep learning.
\newblock {\em IEEE transactions on medical imaging}, 39(6):2076--2087, 2020.

\bibitem{jin2017deep}
K.~H. Jin, M.~T. McCann, E.~Froustey, and M.~Unser.
\newblock Deep convolutional neural network for inverse problems in imaging.
\newblock {\em IEEE Transactions on Image Processing}, 26(9):4509--4522, 2017.

\bibitem{kabri2022convergent}
S.~Kabri, A.~Auras, D.~Riccio, H.~Bauermeister, M.~Benning, M.~Moeller, and
  M.~Burger.
\newblock Convergent data-driven regularizations for ct reconstruction.
\newblock {\em arXiv preprint arXiv:2212.07786}, 2022.

\bibitem{KingmaB14}
D.~P. Kingma and J.~Ba.
\newblock Adam: {A} method for stochastic optimization.
\newblock In Y.~Bengio and Y.~LeCun, editors, {\em 3rd International Conference
  on Learning Representations, {ICLR} 2015, San Diego, CA, USA, May 7-9, 2015,
  Conference Track Proceedings}, 2015.

\bibitem{laumont2022bayesian}
R.~Laumont, V.~D. Bortoli, A.~Almansa, J.~Delon, A.~Durmus, and M.~Pereyra.
\newblock {Bayesian} imaging using plug \& play priors: when langevin meets
  tweedie.
\newblock {\em SIAM Journal on Imaging Sciences}, 15(2):701--737, 2022.

\bibitem{lecun1998mnist}
Y.~LeCun.
\newblock The mnist database of handwritten digits.
\newblock {\em \url{http://yann.lecun.com/exdb/mnist/}}, 1998.

\bibitem{li2020nett}
H.~Li, J.~Schwab, S.~Antholzer, and M.~Haltmeier.
\newblock Nett: Solving inverse problems with deep neural networks.
\newblock {\em Inverse Problems}, 36(6):065005, 2020.

\bibitem{louis1989inverse}
A.~K. Louis.
\newblock {\em Inverse und schlecht gestellte Probleme}.
\newblock Teubner, 1989.

\bibitem{lunz2018adversarial}
S.~Lunz, O.~{\"O}ktem, and C.-B. Sch{\"o}nlieb.
\newblock Adversarial regularizers in inverse problems.
\newblock {\em Advances in neural information processing systems}, 31, 2018.

\bibitem{mathe2008general}
P.~Math{\'e} and B.~Hofmann.
\newblock How general are general source conditions?
\newblock {\em Inverse Problems}, 24(1):015009, 2008.

\bibitem{miyato2018spectral}
T.~Miyato, T.~Kataoka, M.~Koyama, and Y.~Yoshida.
\newblock Spectral normalization for generative adversarial networks.
\newblock {\em arXiv preprint arXiv:1802.05957}, 2018.

\bibitem{mukherjee2020learned}
S.~Mukherjee, S.~Dittmer, Z.~Shumaylov, S.~Lunz, O.~{\"O}ktem, and C.-B.
  Sch{\"o}nlieb.
\newblock Learned convex regularizers for inverse problems.
\newblock {\em arXiv preprint arXiv:2008.02839}, 2020.

\bibitem{mukherjee2023learned}
S.~Mukherjee, A.~Hauptmann, O.~{\"O}ktem, M.~Pereyra, and C.-B. Sch{\"o}nlieb.
\newblock Learned reconstruction methods with convergence guarantees: A survey
  of concepts and applications.
\newblock {\em IEEE Signal Processing Magazine}, 40(1):164--182, 2023.

\bibitem{mukherjee2021learning}
S.~Mukherjee, C.-B. Sch{\"o}nlieb, and M.~Burger.
\newblock Learning convex regularizers satisfying the variational source
  condition for inverse problems.
\newblock In {\em NeurIPS 2021 Workshop on Deep Learning and Inverse Problems},
  2021.

\bibitem{obmann2023convergence}
D.~Obmann and M.~Haltmeier.
\newblock Convergence analysis of equilibrium methods for inverse problems.
\newblock {\em arXiv preprint arXiv:2306.01421}, 2023.

\bibitem{obmann2021augmented}
D.~Obmann, L.~Nguyen, J.~Schwab, and M.~Haltmeier.
\newblock Augmented nett regularization of inverse problems.
\newblock {\em Journal of Physics Communications}, 5(10):105002, 2021.

\bibitem{oh2018eter}
C.~Oh, D.~Kim, J.-Y. Chung, Y.~Han, and H.~Park.
\newblock Eter-net: End to end mr image reconstruction using recurrent neural
  network.
\newblock In {\em Machine Learning for Medical Image Reconstruction: First
  International Workshop, MLMIR 2018, Held in Conjunction with MICCAI 2018,
  Granada, Spain, September 16, 2018, Proceedings 1}, pages 12--20. Springer,
  2018.

\bibitem{pesquet2021learning}
J.-C. Pesquet, A.~Repetti, M.~Terris, and Y.~Wiaux.
\newblock Learning maximally monotone operators for image recovery.
\newblock {\em SIAM Journal on Imaging Sciences}, 14(3):1206--1237, 2021.

\bibitem{rieder2013keine}
A.~Rieder.
\newblock {\em Keine Probleme mit inversen Problemen: eine Einf{\"u}hrung in
  ihre stabile L{\"o}sung}.
\newblock Springer-Verlag, 2013.

\bibitem{ryu2019plug}
E.~Ryu, J.~Liu, S.~Wang, X.~Chen, Z.~Wang, and W.~Yin.
\newblock Plug-and-play methods provably converge with properly trained
  denoisers.
\newblock In {\em International Conference on Machine Learning}, pages
  5546--5557. PMLR, 2019.

\bibitem{Schuster_2012}
T.~Schuster, B.~Kaltenbacher, B.~Hofmann, and K.~S. Kazimierski.
\newblock {\em Regularization Methods in Banach Spaces}.
\newblock De Gruyter, Berlin, Boston, 2012.

\bibitem{schwab2019deep}
J.~Schwab, S.~Antholzer, and M.~Haltmeier.
\newblock Deep null space learning for inverse problems: convergence analysis
  and rates.
\newblock {\em Inverse Problems}, 35(2):025008, 2019.

\bibitem{ulyanov2020deep}
D.~Ulyanov, A.~Vedaldi, and V.~Lempitsky.
\newblock Deep image prior.
\newblock {\em International Journal of Computer Vision}, 128:1867--1888, 2020.

\bibitem{venkatakrishnan2013plug}
S.~V. Venkatakrishnan, C.~A. Bouman, and B.~Wohlberg.
\newblock Plug-and-play priors for model based reconstruction.
\newblock In {\em 2013 IEEE Global Conference on Signal and Information
  Processing}, pages 945--948. IEEE, 2013.

\end{thebibliography}

\end{document}